\newtheorem{theorem}{Theorem}[section]
\newtheorem{lemma}{Lemma}[section]
\newtheorem{assumption}[theorem]{Assumption}
\newtheorem{remark}{Remark}[section]
\numberwithin{equation}{section}
\newcommand{\ga}{\gamma}
\newcommand{\Ga}{\Gamma}
\newcommand{\fy}{\varphi}
\newcommand{\la}{\lambda}
\def\al{{\alpha}}
\def\RR{{\mathbb R}}
\def\II{{(D)}}
\def\DD#1#2{\icount=#1
  \ifnum\icount<1
  \,_{ 0}\kern -.1em D^{#2}_{\kern -.1em x}
  \else
  \,_{x}\kern -.2em D^{#2}_1
  \fi
}
\def\DDRI#1#2{\icount=#1
  \ifnum\icount<1
  \,_{-\infty}^{\kern 1em R}\kern -.2em D^{#2}_{\kern -.1em x}
  \else
  \,_{x}^R \kern -.2em D^{#2}_\infty
  \fi
}
\def\DDR#1#2{\icount=#1
  \ifnum\icount<1
 _{0}^{ \kern -.1em R} \kern -.2em D^{#2}_{\kern -.1em x}
  \else
 _{x}^{ \kern -.1em R} \kern -.2em D^{#2}_{\kern -.1em 1}
  \fi
}
\def\DDCI#1#2{\icount=#1
  \ifnum\icount<1
  \,_{-\infty}^{\kern 1em C}  \kern -.2em D^{#2}_{\kern -.1em x}
  \else
  \,_{x}^C \kern -.2em  D^{#2}_\infty
  \fi
}
\def\DDC#1#2{\icount=#1
  \ifnum\icount<1
  \,_{0}^C \kern -.2em  D^{#2}_{\kern -.1em x}
  \else
  \,_{x}^C \kern -.2em D^{#2}_1
  \fi
}
\def\Hd#1{\widetilde H^{#1}(D)}
\def\Hdi#1#2{\icount=#1
  \ifnum\icount<1
  \widetilde H_{L}^{#2}\II
  \else
  \widetilde H_{R}^{#2}\II
  \fi
}
\begin{document}
\title[FEM with reconstruction for fractional BVP]
{A Simple Finite Element Method for Boundary Value Problems with a Riemann-Liouville Derivative}
\author {Bangti Jin \and Raytcho Lazarov \and Xiliang Lu \and Zhi Zhou}
\address {Department of Computer Science, University College London, Gower Street, London WC1E 6BT, UK
({bangti.jin@gmail.com})}
\address {Department of Mathematics, Texas A\&M University, College Station, TX 77843-3368
({{lazarov@math.tamu.edu}, {zzhou@math.tamu.edu}})}
\address {School of Mathematics and Statistics, Wuhan University, Wuhan 430072,
P.R. China. ({{xllv.math@whu.edu.cn}})}

\date{started June, 2014; today is \today}

\begin{abstract}
We consider a boundary value problem involving a Riemann-Liouville fractional derivative
of order $\al\in (3/2,2)$ on the unit interval $(0,1)$. The standard Galerkin finite
element approximation converges slowly due to the presence of singularity term $x^{\al-1}$
in the solution representation. In this work, we develop a simple technique, by transforming
it into a second-order two-point boundary value problem with nonlocal low order terms, whose solution
can reconstruct directly the solution to the original problem. The stability of the
variational formulation, and the optimal regularity pickup of the solution are analyzed.
A novel Galerkin finite element method with piecewise linear or quadratic finite elements
is developed, and $L^2\II$ error estimates are provided. The approach is then applied to the
corresponding fractional Sturm-Liouville problem, and error estimates of the eigenvalue
approximations are given. Extensive numerical results fully confirm our theoretical study.\\
\textbf{Keywords}: finite element method; Riemann-Liouville derivative; fractional boundary
value problem; Sturm-Liouville problem; singularity reconstruction.
\end{abstract}

\maketitle

\section{Introduction}\label{sec:intro}

In this work, we consider the following boundary value problem involving a Riemann-Liouville fractional derivative
\begin{equation}\label{eqn:fde}
  \begin{aligned}
    -\DDR 0 \alpha u + qu &= f\quad \mbox {in } D \equiv (0,1),\\
     u(0)=u(1) &= 0,
  \end{aligned}
\end{equation}
where $f\in L^2(D)$, and $\DDR 0 \alpha u $ denotes the Riemann-Liouville fractional derivative of order
$\alpha\in(3/2,2)$, defined in \eqref{eqn:Riemann} below. The choice $\alpha\in(3/2,2)$ is mainly technical,
since for $\alpha\in(1,3/2]$, the analysis below does not carry over, even though numerically the technique
to be developed works well. For $\alpha=2$, the fractional derivative $\DDR
0 \alpha u$ recovers the usual second-order derivative $u''$, and thus the model \eqref{eqn:fde} can
be viewed as the fractional counterpart of the classical two-point boundary value problem.

Problem \eqref{eqn:fde} arises in the mathematical modeling of superdiffusion process in heterogeneous media, in which the
mean square variance grows faster than that in the Gaussian process. It has found applications in
magnetized plasma \cite{delCastillo:2003,delCastillo:2005} and subsurface flow \cite{BensonWheatcraftMeerschaert:2000}. The numerical study of problem \eqref{eqn:fde} is quite
extensive. Among existing methods, the finite difference method based on the shifted Gr\"unwald-Letnikov
formula is predominant, since the earlier introduction \cite{TadjeranMeerschaertScheffler:2006}; and
see also \cite{BaeumerKovacs:2014} for higher order schemes. However, in these interesting
works, one standing assumption is that the solution is sufficiently smooth, which unfortunately is
generally not justified \cite{JinLazarovPasciak:2013a}. To this date, the precise condition under which
the solution to \eqref{eqn:fde} is indeed smooth remains unclear. Recently, finite element methods
(FEMs) \cite{ErvinRoop:2006,WangYang:2013} were developed and analyzed.

One of the main challenges in accurately solving problem \eqref{eqn:fde} is that the solution contains a
singular term $x^{\alpha-1}$ (see \cite{JinLazarovPasciak:2013a} and Section \ref{sec:prelim} below), which
in turn limits the global solution regularity and thus also the accuracy of numerical approximations. One way to resolve the issue
is the singularity reconstruction technique recently developed by the first and fourth named authors \cite{JinZhou:2014} and inspired
by \cite{CaiKim:2001}, in which the solution is split into a singular part containing the term $x^{\alpha-1}$,
and a regular part. A variational formulation of the regular part is derived, and the singularity strength
is then reconstructed from the regular part. The numerical experiments in \cite{JinZhou:2014} indicate that the method converges
well for problem \eqref{eqn:fde}, with provable $L^2\II$ convergence rates, which improves that for the
standard Galerkin FEM. However, the extension of the method to the related Sturm-Liouville problem seems
not viable, due to the nonlinear nature of the eigenvalue problem.

In this work, we develop a novel approach for solving problem \eqref{eqn:fde} based on transformation. It
retains the salient features of the singularity reconstruction approach, i.e., resolving accurately
the singularity, enhanced convergence rates and easy implementation. Meanwhile it can be extended straightforwardly to the
related Sturm-Liouville problem with a Riemann-Liouville fractional derivative in the leading term, and the resulting linear system can
be solved efficiently by a preconditioning technique. The approach is motivated by the following observation: under
the Riemann-Liouville integral transformation ${_0\hspace{-0.3mm}I_x^{2-\alpha} u}$,
cf. \eqref{eqn:RL-int}, the leading singularity $x^{\alpha-1}$ is actually smoothed into a very smooth function $x$,
which can be well approximated by the standard conforming finite elements or orthogonal polynomials. We
shall derive a new formulation for the transformed variable, and analyze its stability and the finite element
approximation. Further, the approach is extended to the related Sturm-Liouville problem, and the convergence rate
is also established.

The rest of the paper is organized as follows. In Section \ref{sec:prelim} we recall preliminaries of fractional
calculus, including properties of fractional integral and differential operators in Sobolev spaces. Then in
Section \ref{sec:soltheory}, we derive the new approach, develop the proper variational formulation, and establish
stability estimates. The Galerkin FEM with continuous piecewise linear and quadratic
finite elements is discussed in Section \ref{sec:fem}. $L^2\II$ error estimates are provided for the FEM
approximations to \eqref{eqn:fde}. The approach is then extended to the Sturm-Liouville problem in Section
\ref{sec:eig}. Finally, extensive numerical results are presented in Section \ref{sec:numer} to verify the efficiency and
accuracy of the new approach. Throughout, the notation $c$, with or without a subscript, denote a generic constant,
which may differ at different occurrences, but it is always independent of the mesh size $h$.

\section{Preliminaries}\label{sec:prelim}

We first recall the definition of the Riemann-Liouville fractional derivative. For any $\beta>0$ with $n-1 < \beta < n$,
$n\in \mathbb{N}$, the left-sided Riemann-Liouville fractional derivative $\DDR0\beta u$ of order $\beta$ of a function
$u\in C^n[0,1]$ is defined by \cite[pp. 70]{KilbasSrivastavaTrujillo:2006}:
\begin{equation}\label{eqn:Riemann}
  \DDR0\beta u =\frac {d^n} {d x^n} \bigg({_0\hspace{-0.3mm}I^{n-\beta}_x} u\bigg) .
\end{equation}
Here $_0\hspace{-0.3mm}I^{\gamma}_x$ for $\gamma>0$ is the left-sided Riemann-Liouville fractional integral operator
of order $\gamma$ defined by
\begin{equation}\label{eqn:RL-int}
 ({\,_0\hspace{-0.3mm}I^\gamma_x} f) (x)= \frac 1{\Gamma(\gamma)} \int_0^x (x-t)^{\gamma-1} f(t)dt,
\end{equation}
where $\Gamma(\cdot)$ is Euler's Gamma function defined by $\Gamma(x)=\int_0^\infty t^{x-1}e^{-t}dt$. The right-sided
versions of the fractional-order integral operator $_xI_1^\gamma$ and derivative operator $\DDR 1 \beta$ are defined analogously by
\begin{equation*}
  ({_x\hspace{-0.3mm}I^\gamma_1} f) (x)= \frac 1{\Gamma(\gamma)}\int_x^1 (t-x)^{\gamma-1}f(t)\,dt\quad\mbox{and}\quad
  \DDR1\beta u =(-1)^n\frac {d^n} {d x^n} \bigg({_x\hspace{-0.3mm}I^{n-\beta}_1} u\bigg) .
\end{equation*}

Now we introduce some function spaces. For any $\beta\ge 0$, we denote $H^\beta\II$ to be
the Sobolev space of order $\beta$ on the unit interval $D$, and $\Hd \beta $ to be the set of
functions in $H^\beta\II$ whose extension by zero to $\RR$ are in $H^\beta(\RR)$. Analogously, we define
$\Hdi 0 \beta$ (respectively, $\Hdi 1 \beta$) to be the set of functions $u$ whose extension by zero, denoted by
$\tilde{u}$, is in $H^\beta(-\infty,1)$ (respectively, $H^\beta(0,\infty)$). For $u\in \Hdi 0
\beta$, we set $\|u\|_{\Hdi 0\beta}:=\|\tilde{u}\|_{H^\beta(-\infty,1)}$, and analogously the norm
in $\Hdi 1 \beta$.

The following theorem collects their important properties \cite[pp. 73, Lemma 2.3]{KilbasSrivastavaTrujillo:2006}
\cite[Theorems 2.1 and 3.1]{JinLazarovPasciak:2013a}. In particular, Theorem \ref{thm:fracop}(b) extends the
domain of the operator $\DDR 0 \beta$ from $C^n[0,1]$ to $\Hdi 0\beta $.
\begin{theorem}\label{thm:fracop}
The following statements hold.
\begin{itemize}
  \item[$\mathrm{(a)}$] The integral operators $_0I_x^\beta$ and $_xI_1^\beta$ satisfy the semigroup property.
  \item[$\mathrm{(b)}$] The operators $\DDR0\beta$ and $\DDR1\beta $ extend continuously to operators
     from $\Hdi 0 \beta$ and $\Hdi 1\beta$, respectively, to $L^2\II$.
  \item[$\mathrm{(c)}$] For any $s,\beta\geq 0$, the operator
$_0I_x^\beta$ is bounded from $\Hdi0 s$ to $\Hdi0{\beta+s}$,
and $_xI_1^\beta$ is bounded from $\Hdi1 s$ to $\Hdi1{\beta+s}$.
\end{itemize}
\end{theorem}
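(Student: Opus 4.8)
The three statements are essentially known facts from fractional calculus packaged for the Sobolev-space setting, so the plan is to reduce each to a Fourier-transform computation on the whole line and then transfer back via the extension-by-zero operator. For part (a), the semigroup property $_0I_x^{\beta_1}\,_0I_x^{\beta_2} = {_0I_x^{\beta_1+\beta_2}}$ I would prove directly by Fubini's theorem on the iterated integral, reducing the inner $t$-integral to a Beta-function identity $\int_s^x (x-t)^{\beta_1-1}(t-s)^{\beta_2-1}\,dt = B(\beta_1,\beta_2)(x-s)^{\beta_1+\beta_2-1}$; this is the cleanest of the three and needs only $C[0,1]$ (or $L^1$) regularity, with the right-sided case identical after the substitution $x\mapsto 1-x$.

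For parts (b) and (c) the key observation is that, after extension by zero, the operator $_0I_x^\beta$ acting on a function supported in $[0,1]$ agrees with the left Liouville–Weyl integral $_{-\infty}I_x^\beta$ on $\RR$, whose Fourier symbol is $(i\xi)^{-\beta}$ (with the appropriate branch). Hence for $u\in \Hdi0 s$ I would write $\widetilde{\,_0I_x^\beta u} = \,_{-\infty}I_x^\beta\tilde u$, whose Fourier transform is $(i\xi)^{-\beta}\widehat{\tilde u}(\xi)$, and then estimate
\begin{equation*}
  \|\,_0I_x^\beta u\|_{\Hdi0{\beta+s}}^2 = \int_\RR (1+\xi^2)^{\beta+s}\,|(i\xi)^{-\beta}|^2\,|\widehat{\tilde u}(\xi)|^2\,d\xi
  \le c\int_\RR (1+\xi^2)^{s}\,|\widehat{\tilde u}(\xi)|^2\,d\xi = c\,\|u\|_{\Hdi0 s}^2,
\end{equation*}
where the symbol bound $(1+\xi^2)^\beta|\xi|^{-2\beta}\le c(1+\xi^2)$ holds uniformly in $\xi$. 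This gives (c), and also confirms that $_0I_x^\beta$ indeed maps into functions supported in $[0,1]$ (since $_{-\infty}I_x^\beta$ preserves the condition "supported in $[0,\infty)$", and a short argument using that $u$ vanishes beyond $x=1$ — or alternatively an explicit estimate of the tail — recovers support in $[0,1]$; one must be slightly careful here, since $_0I_x^\beta u$ need not vanish on $(1,\infty)$, so the cleaner route is to keep everything on $(0,\infty)$ and use the $\Hdi1{}$/$\Hdi0{}$ bookkeeping consistently). For (b), by definition $\DDR0\beta = \frac{d^n}{dx^n}\,_0I_x^{n-\beta}$ with $n=\lceil\beta\rceil$, so for $u\in \Hdi0\beta$ one uses part (c) with $s$ chosen so that $_0I_x^{n-\beta}\tilde u \in H^n(\RR)$, namely the Fourier symbol of $\DDR0\beta$ on the extended function is $(i\xi)^\beta$, and $\|(i\xi)^\beta\widehat{\tilde u}\|_{L^2} \le \|(1+\xi^2)^{\beta/2}\widehat{\tilde u}\|_{L^2} = \|u\|_{\Hdi0\beta}$, which gives the claimed continuous extension to $L^2(D)$ by a density argument ($C^n[0,1]\cap\Hdi0\beta$ is dense).

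The main obstacle is not any single estimate but the careful handling of the boundary/support conditions that distinguish $\Hd\beta$, $\Hdi0\beta$, and $\Hdi1\beta$: the equivalences "$\tilde u\in H^\beta(\RR)$" versus the pointwise vanishing of traces hold only for $\beta$ in certain ranges (the borderline case $\beta = n-1/2$ is genuinely delicate), and the left operator $_0I_x^\beta$ interacts naturally with $\Hdi0{}$ spaces while its adjoint relation to the right operator $_xI_1^\beta$ on $\Hdi1{}$ must be tracked so that integration by parts and the symbol identities are applied on the correct half-line. I would therefore state at the outset the precise range of $\beta$ (here $\beta\in(3/2,2)$, so $n=2$ and the relevant trace is well inside the safe range), invoke the characterizations of $\Hdi0\beta$ from \cite{JinLazarovPasciak:2013a} to avoid re-deriving them, and then the Fourier-multiplier arguments above go through cleanly. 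Since these results are quoted from the literature, in the paper itself I would simply cite \cite{KilbasSrivastavaTrujillo:2006} and \cite{JinLazarovPasciak:2013a} and omit the detailed verification.
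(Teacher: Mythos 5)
The paper gives no proof of Theorem \ref{thm:fracop}: it is imported with citations to \cite{KilbasSrivastavaTrujillo:2006} and \cite{JinLazarovPasciak:2013a}, so there is no in-paper argument to compare against, and your closing remark that one would simply cite these sources is exactly what the authors do. Your sketch of (a) via Fubini and the Beta integral is correct, and your sketch of (b) is essentially the standard one: the Liouville--Weyl symbol $(i\xi)^\beta$ satisfies $|(i\xi)^\beta|\le(1+\xi^2)^{\beta/2}$ for all $\xi$, and causality of the left-sided operator lets you pass from an $H^\beta(\RR)$ extension of $\tilde u$ back to $(0,1)$.

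Part (c), however, contains a genuine gap as written. The displayed chain
\begin{equation*}
\|\,{_0I_x^\beta} u\|_{\Hdi0{\beta+s}}^2=\int_\RR(1+\xi^2)^{\beta+s}\,|(i\xi)^{-\beta}|^2\,|\widehat{\tilde u}(\xi)|^2\,d\xi\le c\int_\RR(1+\xi^2)^{s}\,|\widehat{\tilde u}(\xi)|^2\,d\xi
\end{equation*}
cannot be right: the symbol $(i\xi)^{-\beta}$ is singular at $\xi=0$, so the claimed uniform bound $(1+\xi^2)^\beta|\xi|^{-2\beta}\le c(1+\xi^2)$ fails as $\xi\to0$ (the left side tends to $+\infty$; and even if it held, inserting $c(1+\xi^2)$ would produce an $H^{s+1}$ norm of $u$, not the $H^s$ norm you need). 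The singularity is not an artifact: $({_{-\infty}I_x^\beta}\tilde u)(x)\sim \Gamma(\beta)^{-1}x^{\beta-1}\int_0^1u$ as $x\to+\infty$, so the image generally fails to lie in $H^{\beta+s}(\RR)$ once $\beta\ge1/2$, and the full-line Fourier integral above diverges. This is precisely why $\Hdi0{\beta+s}$ is defined through $H^{\beta+s}(-\infty,1)$ rather than $H^{\beta+s}(\RR)$: the norm on the left of your display is a restriction norm on the half-line, not a full-line Plancherel integral. The repair, in the spirit of \cite{JinLazarovPasciak:2013a}, is to exhibit one good extension of $({_0I_x^\beta}u)\big|_{(-\infty,1)}$ to $\RR$ --- for instance multiply ${_{-\infty}I_x^\beta}\tilde u$ by a smooth cutoff equal to $1$ on $(-\infty,1]$ and vanishing for $x\ge 2$, using that ${_{-\infty}I_x^\beta}\tilde u$ is smooth on $[1,2]$ with all derivatives there controlled by $\|u\|_{L^2\II}$ --- and estimate that compactly supported extension in $H^{\beta+s}(\RR)$. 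You flagged the support bookkeeping between $\Hdi0{}$ and $\Hdi1{}$, but not this low-frequency obstruction, which is the actual technical content of (c).
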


We shall also need an ``algebraic'' property of the space $\Hd s$, $0<s<1$
\cite[Lemma 4.6]{JinLazarovPasciak:2013a}.
\begin{lemma}\label{lem:Hmulti}
Let $0<s\leq1$, $s\neq 1/2$. Then for any $u\in H^ s\II \cap L^\infty\II$
and $v\in \Hd s\cap L^\infty\II$, $uv\in \Hd s$.
\end{lemma}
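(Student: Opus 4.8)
The plan is to treat the ranges $0<s<1$ and $s=1$ separately. For $0<s<1$ I would work with the Gagliardo (Sobolev--Slobodeckij) seminorm
\[
  [w]_{H^s(\RR)}^2 := \int_\RR\!\!\int_\RR \frac{|w(x)-w(y)|^2}{|x-y|^{1+2s}}\,dx\,dy,
\]
together with its analogue $[\cdot]_{H^s\II}$ obtained by restricting both integration variables to $D$, recalling that for $0<s<1$ one has $u\in H^s\II$ iff $u\in L^2\II$ with $[u]_{H^s\II}<\infty$. The goal is to show directly that the zero-extension $\widetilde{uv}$ of $uv$ lies in $H^s(\RR)$; by the definition of $\Hd s$ this is exactly the assertion, since $uv=(\widetilde{uv})|_D\in H^s\II$ follows automatically by restriction.

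The $L^2$ part is immediate: $\|\widetilde{uv}\|_{L^2(\RR)}=\|uv\|_{L^2\II}\le\|u\|_{L^\infty\II}\|v\|_{L^2\II}<\infty$. For the seminorm I would split the integral over $\RR\times\RR$ according to whether each point lies in $D$ or in $\RR\setminus D$; the part with both points outside $D$ vanishes, and by symmetry only two contributions remain. On $D\times D$ the pointwise bound
\[
  |u(x)v(x)-u(y)v(y)|\le \|u\|_{L^\infty\II}\,|v(x)-v(y)| + \|v\|_{L^\infty\II}\,|u(x)-u(y)|
\]
gives, after squaring and integrating, a bound by $c\,\|u\|_{L^\infty\II}^2[v]_{H^s\II}^2 + c\,\|v\|_{L^\infty\II}^2[u]_{H^s\II}^2$, finite because $u,v\in H^s\II\cap L^\infty\II$. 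On the mixed region $x\in D$, $y\in\RR\setminus D$ one has $\widetilde{uv}(y)=0$ and $\tilde v(y)=0$, so the integrand equals $|u(x)|^2|v(x)|^2/|x-y|^{1+2s}\le\|u\|_{L^\infty\II}^2\,|\tilde v(x)-\tilde v(y)|^2/|x-y|^{1+2s}$, and integrating over this region bounds it by $\|u\|_{L^\infty\II}^2[\tilde v]_{H^s(\RR)}^2<\infty$. This mixed region is the only place where the full hypothesis $v\in\Hd s$ (equivalently $\tilde v\in H^s(\RR)$), rather than just $v\in H^s\II$, is used; it is also the step to watch at the excluded borderline $s=1/2$, where $\Hd{1/2}$ is a proper subspace of $H^{1/2}\II$. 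Adding the three estimates yields $\widetilde{uv}\in H^s(\RR)$, i.e. $uv\in\Hd s$.

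The case $s=1$ is elementary: here $\Hd 1=H^1_0\II$, and since $u,v\in H^1\II\cap L^\infty\II$ the product rule gives $(uv)'=u'v+uv'\in L^2\II$ (using $u',v'\in L^2\II$ and $u,v\in L^\infty\II$), so $uv\in H^1\II$; as $v$ vanishes at the endpoints of $D$ and $u$ is bounded, $uv$ vanishes there too, hence $uv\in H^1_0\II=\Hd 1$. Overall the argument is routine; the only genuine bookkeeping issue is isolating the mixed region in the decomposition so that the stronger hypothesis on $v$ is invoked precisely there, and keeping track of the borderline exponent $s=1/2$.
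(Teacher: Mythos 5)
The paper contains no proof of this lemma: it is imported verbatim from Lemma 4.6 of the cited reference of Jin, Lazarov and Pasciak, so there is no in-paper argument to compare against line by line. Your proof is correct and self-contained, and it is the natural direct route. The decomposition of the Gagliardo double integral for the zero extension into the $D\times D$ block (handled by the pointwise estimate $|u(x)v(x)-u(y)v(y)|\le \|u\|_{L^\infty\II}|v(x)-v(y)|+\|v\|_{L^\infty\II}|u(x)-u(y)|$ together with $u,v\in H^s\II$) and the mixed block (where $\widetilde{uv}(y)=0$ reduces the integrand to $|u(x)v(x)|^2|x-y|^{-1-2s}$, controlled by $\|u\|_{L^\infty\II}^2[\tilde v]_{H^s(\RR)}^2$) is exactly the right bookkeeping, and you correctly locate the mixed block as the one place where $v\in\Hd s$, rather than merely $v\in H^s\II$, is used; the $s=1$ case via the product rule and the vanishing endpoint values is also fine. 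Two minor remarks. First, your argument never actually invokes $s\neq 1/2$: the Gagliardo characterizations you use hold for all $0<s<1$, so you in fact obtain the multiplier property for $\widetilde H^{1/2}\II=H^{1/2}_{00}\II$ as well; the exclusion in the statement is harmless to you (it is relevant elsewhere, where $\Hd{1/2}$ and $H^{1/2}\II$ cease to coincide). Second, your only external inputs are the equivalences between the Fourier/restriction definitions of $H^s(\RR)$ and $H^s\II$ and the corresponding Slobodeckij norms for $0<s<1$; these are standard on $\RR$ and on an interval, but since they carry the whole argument it would be worth stating them explicitly with a reference.
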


Now we describe the variational formulation. We first introduce the bilinear form
\begin{equation}
a(u,v) = -(\DDR0{{\alpha/2}}u,\,\DDR1{{\alpha/2}}v) +(qu,v).
\end{equation}
Then the variational formulation for problem \eqref{eqn:fde} is given
by: find $u\in \Hd {\alpha/2}$ such that
\begin{equation}\label{varrl}
    a(u,v)=(f,v)\quad \forall v\in \Hd {\alpha/2}.
\end{equation}
For trivial case $q\equiv0$, the well-posedness follows from the boundedness and coercivity of $-(\DDR0{{\alpha/2}}
\cdot,\,\DDR1{{\alpha/2}}\cdot)$ in $\Hd{\al/2}$ (see \cite[Lemma 3.1]{ErvinRoop:2006}, \cite[Lemma
4.2]{JinLazarovPasciak:2013a}). Simple computation shows that the variational solution $u$
of \eqref{varrl} is given by
\begin{equation}\label{eqn:sols}
u(x)=-({_0I_x^\al}f)(x)+({_0I_x^\al}f)(1)x^{\al-1},
\end{equation}
and it satisfies the strong formulation \eqref{eqn:fde}.

To study the bilinear form $a(\cdot,\cdot)$ in general case, i.e. $q\neq0$, we make the following assumption.
\begin{assumption} \label{ass:riem0}
Let the bilinear form $a(u,v)$ with $u,v\in \Hd{\al/2}$ satisfy
\begin{itemize}
 \item[{$\mathrm{(a)}$}]  The problem of finding $u \in \Hd{\al/2}$ such that $a(u,v)=0$ for all $v \in \Hd{\al/2}$
           has only the trivial solution $u\equiv 0$.
 \item[{$(\mathrm{a}^\ast)$}] The problem of finding $v \in \Hd{\al/2}$ such that $a(u,v)=0$ for all $u \in \Hd{\al/2}$
    has only the trivial solution $v\equiv 0$.
\end{itemize}
\end{assumption}

Under Assumption \ref{ass:riem0}, there exists a unique solution $u\in \Hd{\al/2}$
to \eqref{varrl} \cite[Theorem 4.3]{JinLazarovPasciak:2013a}. In fact the variational
solution is a strong solution. To see this, we consider the problem
$ 
-\DDR0{\alpha} u = f-qu.
$ 
A strong solution is given by \eqref{eqn:sols} with a right hand side $\widetilde{f}=f-qu$.
It satisfies the variational equation \eqref{varrl} and hence coincides with the unique variational
solution. Further, the solution $u$ satisfies the stability estimate
$\|u \|_{\Hdi 0 {\al -1 +\beta}} \le c\|f\|_{L^2\II},$
for any $\beta\in(2-\alpha,1/2)$.
The representation \eqref{eqn:sols} indicates that the global regularity of the solution
$u$ does not improve with the regularity of the source term $f$, due to the inherent
presence of the term $x^{\alpha-1}$.

\section{A new approach: Variational formulation and regularity}\label{sec:soltheory}
In this section, we develop a new approach for problem \eqref{eqn:fde}. We first motivate the
approach, and then discuss the variational stability and regularity pickup. The adjoint problem
is also briefly discussed.
\subsection{Motivation of the new approach}
First, we motivate the new approach. The basic idea is to absorb the leading singularity $x^{\alpha-1}$ into the
problem formulation. To this end, we set
\begin{equation}\label{eqn:reconstruct_u}
u={\DDR 0 {2-\alpha}} w - ({\DDR 0 {2-\alpha}}w)(1)x^\mu,
\end{equation}
where $\mu\geq \alpha$ is a parameter to be selected. The motivation behind the choice of the fractional
derivative $\DDR 0 {2-\alpha} w$ is that the primitive of the singularity
$x^{\alpha-1}$ under the ``fractional'' transformation is $x$ (up to a multiplicative constant), which is
smooth and can be accurately approximated by standard finite element functions. The second term in the expression
is to keep the boundary condition $u(1)=0$. From the condition $w(0)=0$, we deduce that $u(0)=0$
(for more details see the proof of Theorem \ref{thm:rec}). Upon
substituting it back into \eqref{eqn:fde}, and noting that for $w\in \Hd1$
\begin{equation*}
 \left( {\,_0\hspace{-0.3mm}I^{\al-1}_x}w\right)'(x) = \left( {\,_0\hspace{-0.3mm}I^{\al-1}_x}w'\right)(x),
\end{equation*}
we arrive at
\begin{equation}\label{eqn:deriv}
  \begin{aligned}
    -\DDR 0 \alpha u + q u 
     & = - w'' + ({\DDR0 {2-\alpha}}w)(1)(c_0x^{\mu-\alpha}-qx^\mu) +
     q\, {\DDR 0 {2-\alpha}} w, 
  \end{aligned}
\end{equation}
where the constant $c_0$ is defined as
\begin{equation}\label{c-zero}
c_0={\Gamma(\mu+1)}/{\Gamma(1+\mu-\al)}.
\end{equation}
Here the second line follows from
the boundary condition $w(0)=0$ and the identity
\begin{equation*}
   {\DDR 0 \alpha}\,{\DDR 0 {2-\alpha}} w = ({_0I_x^{2-\alpha}} \DDR 0 {2-\alpha} w)'' = w''.
\end{equation*}
Consequently, the transformed variable $w$ solves the boundary value problem
\begin{equation}\label{eqn:fde-new}
  \begin{aligned}
  -w'' + q\,{\DDR 0 {2-\alpha}} w &+ ({\DDR 0 {2-\alpha}}w)(1)
     \left(c_0 x^{\mu-\al}-qx^\mu\right) = f  \quad \mbox{ in } D, \\
      w(0)&=w(1) = 0.
  \end{aligned}
\end{equation}

Once problem \eqref{eqn:fde-new} is solved, the solution $u$ to problem \eqref{eqn:fde} can be reconstructed
from \eqref{eqn:reconstruct_u}. Equation \eqref{eqn:fde-new}
is a boundary value problem for an integro-differential equation and
has a number of distinct features:
\begin{itemize}
  \item[(a)] The leading term involves a canonical second-order derivative, and thus the solution
     $w$ is free from singularity, if the source term $f$ is smooth. This overcomes one of the
     main challenges inherent to the fractional formulation \eqref{eqn:fde}.
  \item[(b)] In the resulting linear system from the Galerkin discretization of problem \eqref{eqn:fde-new},
     the leading term is dominant and has a simple structure; it can naturally
     act as a preconditioner.
  \item[(c)] The approach extends straightforwardly to the related Sturm-Liouville problem of finding
     the eigenpairs.
\end{itemize}

\begin{remark}\label{rem:reg-mu2}
Throughout, the condition $\mu\geq \alpha$ will be assumed below. Note that the choice
$\mu=\alpha-1$ is also of special interest, for which, with the identity $\DDR 0 \alpha x^{\alpha-1}
= 0 $, the modified equation reads
\begin{equation*}
  \begin{aligned}
    - w'' + q(x)\,\, {\DDR 0 {2-\alpha}} w - ({\DDR 0 {2-\alpha}}w)(1) \, q(x) \,x^{\alpha-1} & = f(x)
          \quad \mbox{in } D,\\
     w(0) = w(1) &=0.
  \end{aligned}
\end{equation*}
Since $\alpha>3/2$, the term $x^{\alpha-1}$ belongs to the space $H^1\II$. Thus, the theoretical developments
below, especially Theorem \ref{thm:reg-new}, remain valid for this choice.
\end{remark}
\subsection{Variational stability}

Next we discuss the well-posedness of the formulation \eqref{eqn:fde-new} for the case $\alpha\in(3/2,2)$, by showing
\begin{itemize}
\item[(a)] Problem \eqref{eqn:fde-new} has a unique solution $w \in \Hd 1$ and certain regularity pickup;
\item[(b)] $u={\DDR 0 {2-\alpha}} w-({\DDR 0 {2-\alpha}}w)(1) x^{\mu}$
is the solution of problem \eqref{eqn:fde}.
\end{itemize}
Further, we shall consider the following general problem: For $\al\in (3/2,2)$, find $w$
\begin{equation}\label{eqn:fde2}
  \begin{aligned}
  -w'' + q\,\, {\DDR 0 {2-\alpha}} w &+ p \, ({\DDR 0 {2-\alpha}}w)(1)\, = f \quad \mbox{ in } D, \\
      w(0)&=w(1) = 0,
  \end{aligned}
\end{equation}
where $f,p\in H^r\II$ and $q$ belongs to suitable Sobolev spaces to be specified below. The weak
formulation of problem \eqref{eqn:fde2} is given by: find $w\in V\equiv \Hd  1$ such that
\begin{equation}\label{eqn:var}
  \begin{aligned}
  A(w,\fy):=a(w,\fy)+b(w,\fy)=(f,\fy) \quad \forall\fy \in V,
  \end{aligned}
\end{equation}
where the bilinear forms $a(\cdot,\cdot)$ and $b(\cdot,\cdot)$ are defined on $V\times V$ by
\begin{equation}\label{eqn:bilin_form}
  \begin{aligned}
  a(\psi,\fy)=(\psi',\fy') \quad \text{and} \quad
  b(\psi,\fy)=({\DDR 0 {2-\alpha}} \psi,q\fy)+({\DDR 0 {2-\alpha}}\psi)(1) ( p,\fy).
  \end{aligned}
\end{equation}
First we show that $A(\cdot,\cdot)$ is bounded on $V\times V$. For $b(\cdot,\cdot)$, by Theorem \ref{thm:fracop}
we note that for $\psi\in V$
\begin{equation*}
   \| {\DDR 0 {2-\alpha}} \psi \|_{L^2\II}\le c\| \psi  \|_{\Hdi0{2-\al}}\le c\| \psi' \|_{L^2\II}.
\end{equation*}
By the identity $({\,_0\hspace{-0.3mm}I^{\al-1}_x} \psi)'=({\,_0\hspace{-0.3mm}I^{\al-1}_x} \psi')$ for
$\psi\in V$ \cite[Lemma 4.1]{JinLazarovPasciak:2013a} we have (with $\omega_{\alpha-1}(x)={(1-x)^{\alpha-2}}/{\Gamma(\alpha-1)}$)
\begin{equation*}
  \begin{aligned}
  |({\DDR 0 {2-\alpha}}\psi)(1)|&= |({\,_0\hspace{-0.3mm}I^{\al-1}_x} \psi') (1)|
  \le c\| \omega_{\alpha-1}\|_{L^2\II}  \| \psi'\|_{L^2\II}.
  \end{aligned}
\end{equation*}
Note that $\omega_{\alpha-1} \in L^2\II$ for $\al\in(3/2,2)$. Hence
\begin{equation}\label{eqn:boundb}
  \begin{aligned}
  |b(\psi,\fy)| &\le \| q \|_{L^\infty\II} \| {\DDR 0 {2-\alpha}} \psi \|_{L^2\II} \| \fy \|_{L^2\II}
  +  |({\DDR 0 {2-\alpha}}\psi)(1)| \| p \|_{L^2\II}  \| \fy \|_{L^2\II} \\
  &\le c  \| \psi \|_{V} \| \fy \|_{L^2\II}.
  \end{aligned}
\end{equation}

Now we turn to the well-posedness of the variational formulation \eqref{eqn:var}.
In case of $q \equiv p \equiv 0$, the bilinear form $A(\cdot,\cdot)$ is identical with $a(\cdot,\cdot)$
which recovers the standard Poisson equation and the well-posedness is well-known.
Next we consider the general case when $q $ and $p $ are not identically zero. To
this end, we make the following uniqueness
assumption on the bilinear form $A(\cdot,\cdot)$.
\begin{assumption} \label{ass:A}
Let the bilinear form $A(w,v)$ with $w,v\in V$ satisfy
\begin{itemize}
 \item[{$\mathrm{(a)}$}]  The problem of finding $w \in V$ such that $A(w,v)=0$ for all $v \in V$
           has only the trivial solution $w\equiv 0$.
 \item[{$(\mathrm{a}^\ast)$}] The problem of finding $v \in V$ such that $A(w,v)=0$ for all $w \in V$
    has only the trivial solution $v\equiv 0$.
\end{itemize}
\end{assumption}

Under Assumption \ref{ass:A}, the variational formulation \eqref{eqn:var} is stable.
\begin{theorem}\label{thm:well-posed}
Let Assumption \ref{ass:A} hold, $q\in L^\infty\II$ and
 $ p \in L^2\II$. Then for any $F\in V^*$, there
exists a unique solution $w\in V$ to
\begin{equation}\label{eqn:well-posed}
  A(w,\fy)= \langle F,\fy\rangle \quad \forall \fy\in V,
\end{equation}
where $\langle\cdot,\cdot\rangle$ denotes the duality between $V$ and its dual space $V^*=H^{-1}\II$.
\end{theorem}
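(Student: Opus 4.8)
The plan is to recast \eqref{eqn:well-posed} as an operator equation and solve it by the Fredholm alternative, viewing $b(\cdot,\cdot)$ as a compact perturbation of the principal part $a(\cdot,\cdot)$. First I would define bounded operators $\mathcal{A}_0,\mathcal{K}\colon V\to V^*$ by $\langle \mathcal{A}_0 w,\fy\rangle=a(w,\fy)$ and $\langle \mathcal{K} w,\fy\rangle=b(w,\fy)$, so that \eqref{eqn:well-posed} becomes $(\mathcal{A}_0+\mathcal{K})w=F$. Since $a(\fy,\fy)=\|\fy'\|_{L^2\II}^2$ and, by the Poincar\'e inequality, $\|\fy'\|_{L^2\II}$ is a norm on $V=\Hd1$ equivalent to $\|\cdot\|_V$, the form $a(\cdot,\cdot)$ is bounded and coercive on $V\times V$; the Lax--Milgram lemma then makes $\mathcal{A}_0$ an isomorphism of $V$ onto $V^*$ with a uniformly bounded inverse.

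The crucial point I would exploit is the estimate \eqref{eqn:boundb}, $|b(\psi,\fy)|\le c\|\psi\|_V\|\fy\|_{L^2\II}$, which shows that $b(\psi,\cdot)$ in fact extends to a bounded functional on $L^2\II$. Hence $\mathcal{K}$ factors as $V\xrightarrow{\ B\ }L^2\II\hookrightarrow V^*$ with $B$ bounded, and the embedding $L^2\II\hookrightarrow V^*=H^{-1}\II$ is compact (equivalently, $V=\Hd1\hookrightarrow L^2\II$ is compact by Rellich--Kondrachov and one dualizes). Therefore $\mathcal{K}\colon V\to V^*$ is compact, and so $\mathcal{A}_0^{-1}\mathcal{K}\colon V\to V$ is a compact linear operator.

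Then I would write $\mathcal{A}_0+\mathcal{K}=\mathcal{A}_0\,(I+\mathcal{A}_0^{-1}\mathcal{K})$ and invoke the Riesz--Schauder theory: $I+\mathcal{A}_0^{-1}\mathcal{K}$ is a Fredholm operator of index zero on $V$, hence invertible if and only if it is injective. Its injectivity is exactly the statement that $A(w,v)=0$ for all $v\in V$ forces $w\equiv0$, i.e.\ Assumption \ref{ass:A}(a); the adjoint condition $(\mathrm{a}^\ast)$ records the analogous property of the transpose, which for an index-zero operator follows automatically but is convenient for the adjoint problem discussed next. Consequently $I+\mathcal{A}_0^{-1}\mathcal{K}$, and thus $\mathcal{A}_0+\mathcal{K}$, is a bijection of $V$ onto $V^*$, and the bounded inverse theorem yields the stability bound $\|w\|_V\le c\|F\|_{V^*}$, which completes the argument.

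I expect the proof to be essentially routine once \eqref{eqn:boundb} is in hand, so there is no single hard obstacle; the one point that must be handled with care is that \eqref{eqn:boundb} controls $b$ only by the $L^2\II$ norm of the test function rather than its full $V$-norm, and it is precisely this regularity gain that upgrades $\mathcal{K}$ from bounded to compact and makes the Fredholm machinery applicable. It is worth noting, and I would flag it, that this gain ultimately relies on $\al>3/2$, which ensures $\omega_{\alpha-1}\in L^2\II$ and hence that the nonlocal functional $\psi\mapsto(\DDR0{2-\al}\psi)(1)$ is bounded on $V$.
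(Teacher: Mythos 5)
Your proposal is correct, and it rests on the same underlying idea as the paper's proof --- that $b(\cdot,\cdot)$ is a compact perturbation of the coercive principal part $a(\cdot,\cdot)$ --- but the two arguments are packaged differently. The paper does not go through Lax--Milgram and the Fredholm alternative; instead it defines operators $S,T\in\mathcal{L}(V;V^*)$ by $\langle Sw,\fy\rangle=A(w,\fy)$ and $\langle Tw,\fy\rangle=-b(w,\fy)$, proves compactness of $T$ by writing it as an integral operator acting on $w'$ whose kernels $p(x)(1-y)^{\al-2}$ and $q(x)(x-y)^{\al-2}\chi_{(0,x)}(y)$ are square integrable for $\al\in(3/2,2)$ (a Hilbert--Schmidt argument), and then applies the Petree--Tartar lemma to the identity $\|w\|_V^2=A(w,w)-b(w,w)$ to obtain the inf-sup condition \eqref{inf-suprl}; existence is then deduced from that condition together with Assumption \ref{ass:A}$(\mathrm{a}^\ast)$. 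Your route gets compactness more cheaply, using only the bound \eqref{eqn:boundb} (which gives a bounded map $B:V\to L^2\II$) composed with the compact embedding $L^2\II\hookrightarrow H^{-1}\II$; both arguments hinge on $\al>3/2$ through $\omega_{\al-1}\in L^2\II$, exactly as you flag. Your formulation also has the small dividend that, $I+\mathcal{A}_0^{-1}\mathcal{K}$ being Fredholm of index zero, injectivity (Assumption \ref{ass:A}$(\mathrm{a})$) already forces surjectivity, so $(\mathrm{a}^\ast)$ need not be assumed separately, whereas the paper's inf-sup route only yields uniqueness and closed range and must invoke $(\mathrm{a}^\ast)$ to conclude existence. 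What the paper's version buys in exchange is the inequality \eqref{inf-suprl} in exactly the form that is reused, via the Schatz-type kick-back argument, to establish the discrete inf-sup condition and stability in Theorem \ref{thm:disc-wellposed}.
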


\begin{proof}
The stability is proved by Petree-Tartar Lemma \cite[pp. 469, Lemma A.38]{ern-guermond}.
To this end, we define two operators $S \in \mathcal{L}(V;V^*)$ and $T \in \mathcal{L}(V;V^*)$ by
\begin{equation*}
  \langle Sw,\fy\rangle=A(w,\fy)\quad \text{and}\quad\langle Tw,\fy\rangle=-b(w,\fy),
\end{equation*}
respectively. Assumption \ref{ass:A}(a) shows the injectivity of the operator $S$. Further,
\begin{equation*}
  \begin{split}
   (Tw)(x)& = -\int_0^1 \frac{p(x) (1-y)^{\al-2}}{\Ga(\al-1)}w'(y)\,dy
   -\int_0^1 \frac{q(x)(x-y)^{\al-2}\chi_{(0,x)}(y)}{\Ga(\al-1)}w'(y)\,dy\\
    &=: (T_1 w)(x) + (T_2w)(x).
  \end{split}
\end{equation*}
We note that both $T_1$ and $T_2$ are compact from $V$ to $L^2\II$, since for $\alpha\in(3/2,2)$
both kernels are square integrable \cite[pp. 277, example 2]{Yoshida:1980}. Thus the operator
$T:V\to L^2\II$  is compact. By the definition of $a(\cdot,\cdot)$, we obtain
\begin{equation*}
    \|w\|_{V}^2 = a(w,w)= A(w,w)-b(w,w)
    \le c\left(\| Tu \|_{V^*}  +  \| Su  \|_{V^*}\right) \| w \|_V,
\end{equation*}
Now Petree-Tartar Lemma immediately implies that there exists a constant $c_0>0$
satisfying the following inf-sup condition
\begin{equation}\label{inf-suprl}
  c_0  \|u\|_V \le \sup_{v\in V } \frac {A(u,v)} {\|v\|_V}.
\end{equation}
This and Assumption \ref{ass:A}$(\hbox{a}^\ast)$ yield the existence  of
a unique solution $u\in V$ to \eqref{eqn:well-posed}.
\end{proof}

Now we state an improved regularity result for the case $\langle F,v\rangle
=(f,v)$, for some $f \in H^s\II$, $0\leq s\leq 1$.
\begin{theorem}\label{thm:reg-new}
Let Assumption \ref{ass:A} hold and $q\in L^\infty\II$ and $f\in L^2\II$. Then
the solution $w$ to problem \eqref{eqn:fde2} belongs to $\Hd 1 \cap H^2\II$
and satisfies
\begin{equation*}
  \| w \|_{H^2\II} \le c\|f\|_{L^2\II}.
\end{equation*}
Further, if $q,f\in H^1\II$, then it belongs to $H^{3}\II\cap \Hd 1$ and satisfies
\begin{equation*}
  \| w \|_{H^3\II} \le c \|f\|_{H^1\II}.
\end{equation*}
\end{theorem}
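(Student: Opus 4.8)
## Proof Plan

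The plan is to bootstrap from the well-posedness result (Theorem~\ref{thm:well-posed}) and then successively upgrade the regularity of $w$ by treating the lower-order terms as a known right-hand side. First I would invoke Theorem~\ref{thm:well-posed} with $F = f \in L^2\II \subset V^*$ to obtain a unique $w \in V = \Hd 1$ solving \eqref{eqn:var}. The key observation is then that $w$ satisfies $-w'' = f - q\,{\DDR 0 {2-\alpha}} w - ({\DDR 0 {2-\alpha}}w)(1)\,p$ in the sense of distributions. I would bound the right-hand side in $L^2\II$: by Theorem~\ref{thm:fracop}(c), $\|{\DDR 0 {2-\alpha}} w\|_{L^2\II} \le c\|w\|_{\Hdi 0 1} \le c\|w\|_V$; since $q \in L^\infty\II$ and $p \in L^2\II$ (and $({\DDR 0 {2-\alpha}}w)(1)$ is controlled as in the estimate preceding \eqref{eqn:boundb}), the right-hand side is in $L^2\II$ with norm $\le c\|f\|_{L^2\II}$ using the stability bound $\|w\|_V \le c\|f\|_{L^2\II}$ from the inf-sup condition \eqref{inf-suprl}. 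Elliptic regularity for the two-point problem $-w'' = g$, $w(0)=w(1)=0$ then gives $w \in H^2\II \cap \Hd 1$ with $\|w\|_{H^2\II} \le c\|g\|_{L^2\II} \le c\|f\|_{L^2\II}$.

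For the second, higher-order statement, I would repeat the bootstrap one level up, now assuming $q, f \in H^1\II$. Having already established $w \in H^2\II$, I would show the right-hand side $g = f - q\,{\DDR 0 {2-\alpha}} w - ({\DDR 0 {2-\alpha}}w)(1)\,p$ actually lies in $H^1\II$. The terms $f$ and $({\DDR 0 {2-\alpha}}w)(1)\,p$ are in $H^1\II$ immediately (the former by hypothesis, the latter since $p \in H^1\II$ and the coefficient is a constant — here I would need $p \in H^1\II$, which fits the setup of \eqref{eqn:fde2} with $r \ge 1$, and for the application to \eqref{eqn:fde-new} one checks $c_0 x^{\mu-\alpha} - q x^\mu \in H^1\II$ using $\mu \ge \alpha$, or $\mu = \alpha-1$ with $\alpha > 3/2$ as in Remark~\ref{rem:reg-mu2}). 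The product $q\,{\DDR 0 {2-\alpha}} w$ requires more care: I would use Theorem~\ref{thm:fracop}(c) again with $w \in H^2\II \cap \Hd 1 \subset \Hdi 0 s$ for appropriate $s$, to conclude ${\DDR 0 {2-\alpha}} w \in \Hdi 0 {s}$ for some $s$ in the range where multiplication by $H^1$ functions is stable — invoking Lemma~\ref{lem:Hmulti} (with the caveat $s \ne 1/2$) to keep the product in $\Hd s \subset H^s\II$, hence in $H^1\II$ after checking the fractional derivative lands in $H^1\II$. Once $g \in H^1\II$ with $\|g\|_{H^1\II} \le c\|f\|_{H^1\II}$, elliptic regularity for $-w'' = g$ upgrades $w$ to $H^3\II \cap \Hd 1$ with the stated bound.

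The main obstacle is the regularity of the fractional term $q\,{\DDR 0 {2-\alpha}} w$ in the second step: one must verify that ${\DDR 0 {2-\alpha}} w$ — a fractional derivative of a genuinely $H^2$ function vanishing at the endpoints — lands in a space (such as $\Hdi 0 {s}$ or $H^1\II$) on which multiplication by an $H^1$ coefficient is bounded, without running into the exceptional exponent $s = 1/2$ in Lemma~\ref{lem:Hmulti}. This is where the restriction $\alpha \in (3/2,2)$ and the precise mapping properties of $_0I_x^{\gamma}$ between the $\Hdi 0 \beta$ scales from Theorem~\ref{thm:fracop}(c) must be combined carefully; one likely needs to argue that $w \in \Hd 1$ with $w' \in H^1\II$ gives ${\DDR 0 {2-\alpha}} w = {}_0I_x^{\alpha-1} w' \in \Hdi 0 {\alpha-1} \cap$ (something better near the left endpoint), and then track which fractional Sobolev index survives multiplication by $q$. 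A boundary-behavior check near $x=0$ (where $w(0)=0$ helps) and near $x=1$ is the delicate point. Everything else is a routine elliptic bootstrap.
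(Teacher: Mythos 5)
Your proposal is correct and follows essentially the same route as the paper: invoke Theorem~\ref{thm:well-posed} for existence, rewrite the equation as $-w''=\widetilde f$ with $\widetilde f=f-q\,\DDR0{2-\alpha}w-(\DDR0{2-\alpha}w)(1)p$, bound $\widetilde f$ in $L^2\II$ (respectively $H^1\II$) and apply elliptic regularity, then bootstrap. The only difference is that you dwell on the delicate point — showing $q\,\DDR0{2-\alpha}w\in H^1\II$ in the second step, which hinges on $\alpha>3/2$ so that $\DDR0{2-\alpha}w={}_0I_x^{\alpha-1}w'$ gains enough smoothness — a point the paper's proof passes over with ``repeating the preceding arguments,'' so your extra care is warranted rather than a deviation.
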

\begin{proof}
The existence and uniqueness of a solution $w \in V$ follows directly from Theorem \ref{thm:well-posed}.
Hence, it suffices to show the stability estimate. By Theorem \ref{thm:fracop}, $\DDR 0 {2-\alpha}w\in
H^{\al-1}\II$, and by Sobolev embedding theorem, $q\, {\DDR 0 {2-\alpha}w}\in H^{\alpha-1}\II$. Note that
problem \eqref{eqn:fde2} can be rewritten as
\begin{equation*}
-w''= \widetilde{f},
\end{equation*}
where $\widetilde{f}=-q\,\DDR 0 {2-\alpha}w -(\DDR 0 {2-\alpha}w)(1)p+f$.
The preceding discussion yields $\widetilde{f} \in L^2\II$ and $\|\widetilde{f}\|_{L^2\II}\leq c\|f\|_{L^2\II}$.
Hence, by standard elliptic regularity theory \cite{GilbargTrudinger:2001},
we deduce $u\in H^2\II \cap \Hd 1$. Further, if $ q,f\in H^1\II$, with this improved regularity on $w$, repeating
the preceding arguments gives $\widetilde{f}\in H^1\II$ and $\|\widetilde{f}\|_{H^1\II}\leq c\|f\|_{H^1\II}$, and
applying elliptic regularity theory again yields the desired estimate.
\end{proof}

The next result shows that Assumption \ref{ass:riem0} implies Assumption \ref{ass:A}(a).
\begin{lemma}
Let $p(x)= c_0x^{\mu-\al}-qx^\mu$ where $c_0$ 
is defined in \eqref{c-zero}. Then
Assumption \ref{ass:riem0} implies Assumption \ref{ass:A}$\mathrm{(a)}$.
\end{lemma}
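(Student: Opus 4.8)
The plan is to run the reconstruction of Section~\ref{sec:soltheory} in reverse: a nontrivial $w\in V$ with $A(w,\cdot)\equiv0$ on $V$ would yield, through \eqref{eqn:reconstruct_u}, a nontrivial solution of the homogeneous version of \eqref{varrl}, contradicting Assumption~\ref{ass:riem0}(a).

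First I would record that, with the prescribed choice $p=c_0x^{\mu-\al}-qx^\mu$, problem \eqref{eqn:fde2} is literally \eqref{eqn:fde-new}; so a $w\in V$ with $A(w,\fy)=0$ for all $\fy\in V$ is a weak solution of $-w''=\widetilde f$, where $\widetilde f:=-q\,{\DDR 0 {2-\al}}w-({\DDR 0 {2-\al}}w)(1)(c_0x^{\mu-\al}-qx^\mu)\in L^2\II$ by Theorem~\ref{thm:fracop}. Elliptic regularity then gives $w\in\Hd 1\cap H^2\II$, so $w$ solves \eqref{eqn:fde-new} (with $f\equiv0$) pointwise and the manipulations below are legitimate. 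Taking $u$ as defined in \eqref{eqn:reconstruct_u} and combining the computation that produced \eqref{eqn:deriv} with \eqref{eqn:fde-new}, I obtain $-\DDR 0 \al u+qu=0$ in $D$; moreover $u(1)=0$ by construction, and $u(0)=0$ because ${\DDR 0 {2-\al}}w={_0I_x^{\al-1}}w'$ vanishes at $x=0$ (recall $w(0)=0$, which also validates this identity for the fractional derivative).

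The technical heart is to verify that $u\in\Hd{\al/2}$ and that $u$ solves \eqref{varrl} with $f\equiv0$, after which Assumption~\ref{ass:riem0}(a) gives $u\equiv0$. For the membership: since $w'\in H^1\II$ we have $w'\in\Hdi 0 s$ for every $s<1/2$, so Theorem~\ref{thm:fracop}(c) gives ${_0I_x^{\al-1}}w'\in\Hdi 0 t$ for every $t<\al-1/2$; because $\al>1$ we may choose $t\ge\al/2$, and since $x^\mu$ is smooth and $u$ vanishes at both endpoints, $u$ itself (not merely its left-extension) lies in $\Hd{\al/2}$. For the weak equation: $-\DDR 0 \al u=-qu\in L^2\II$, and---arguing exactly as in Section~\ref{sec:prelim}, where the strong solution \eqref{eqn:sols} (the $x^{\al-2}$ mode being excluded by the $\Hd{\al/2}$ regularity) is shown to satisfy \eqref{varrl}---we conclude $a(u,v)=0$ for all $v\in\Hd{\al/2}$.

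Finally I would deduce $w\equiv0$ from $u\equiv0$. The latter says ${_0I_x^{\al-1}}w'={\DDR 0 {2-\al}}w=Cx^\mu$ with the constant $C:=({\DDR 0 {2-\al}}w)(1)$. Applying ${_0I_x^{2-\al}}$ and using the semigroup property (Theorem~\ref{thm:fracop}(a)), the left-hand side becomes ${_0I_x^1}w'=w$ (using $w(0)=0$ and $w'\in L^1\II$), while the right-hand side equals $C\frac{\Ga(\mu+1)}{\Ga(\mu+3-\al)}x^{\mu+2-\al}$; here $\mu\ge\al$ ensures $\mu+2-\al>0$. Thus $w$ is a scalar multiple of $x^{\mu+2-\al}$, and $w(1)=0$ forces $C=0$, hence $w\equiv0$ and Assumption~\ref{ass:A}(a) holds. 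I expect the only real obstacle to be the bookkeeping in the third paragraph---tracking the Sobolev and zero-extension scales through the mapping properties of ${_0I_x^{\al-1}}$ and through the Riemann--Liouville integration-by-parts identity---rather than any conceptual difficulty.
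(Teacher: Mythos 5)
Your proposal is correct and follows essentially the same route as the paper's proof: reconstruct $u$ from $w$ via \eqref{eqn:reconstruct_u}, show it solves the homogeneous problem \eqref{varrl} so that Assumption \ref{ass:riem0} forces $u\equiv0$, then invert $\DDR0{2-\al}w=Cx^{\mu}$ and use $w(1)=0$ to conclude $w\equiv0$. Your third paragraph is in fact slightly more careful than the paper, which only records $u\in\Hd{\al-1}$ before invoking Assumption \ref{ass:riem0} on $\Hd{\al/2}$.
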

\begin{proof}
Let $f=0$ in \eqref{varrl} and \eqref{eqn:var}. Suppose that $w \in V$ satisfies \eqref{eqn:var}.
Then by construction $u={\DDR0{2-\al}w}-(\DDR0{2-\al} w)(1)x^{\mu} \in \Hd{\al-1}$ and $(w',\fy') =
\langle -w'', \fy \rangle$ for $\fy\in C_0^\infty\II$ we have
\begin{equation*}
  \langle -\DDR0{\al}u + qu, \fy \rangle = 0 \quad \forall \fy\in C_0^\infty\II,
\end{equation*}
i.e., $-\DDR0{\al}u + qu= 0$ in the sense of distribution and in view of Theorem \ref{thm:reg-new}, also
in $L^2\II$. Now Assumption \ref{ass:riem0} yields $u=0$. Hence $w \in V$ satisfies
\begin{equation}\label{eqn:rec}
  \DDR0{2-\al}w=(\DDR0{2-\al}w)(1)x^{\mu}.
\end{equation}
by setting $\DDR0{2-\al}w = cx^{\mu}$,
the solution $w\in V$ of \eqref{eqn:rec}
is of the form $w(x)=c({\,_0\hspace{-0.3mm}I^{2-\al}_x}x^{\mu})(x)$. This together with the boundary condition
$w(1)=0$ yields $c=0$ and hence $w=0$.
\end{proof}

Once the solution $w$ to problem \eqref{eqn:fde2} is found, the solution to problem \eqref{eqn:fde} can be
found by the reconstruction formula \eqref{eqn:reconstruct_u}.
\begin{theorem}\label{thm:rec}
Let $f\in L^2\II$ and $q\in L^\infty\II$, and $w$ be the unique solution to \eqref{eqn:fde2}.
Then the representation $u$ given in \eqref{eqn:reconstruct_u} is a solution of problem \eqref{eqn:fde}.
\end{theorem}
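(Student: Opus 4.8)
The plan is to verify directly that the function $u$ in \eqref{eqn:reconstruct_u} satisfies the strong form \eqref{eqn:fde}, i.e.\ to turn the formal computation \eqref{eqn:deriv} into a rigorous one. Implicitly I take $p=c_0x^{\mu-\al}-qx^\mu$ with $c_0$ from \eqref{c-zero}, so that \eqref{eqn:fde2} coincides with \eqref{eqn:fde-new}; since $\mu\geq\al$ this $p$ lies in $L^\infty\II\subset L^2\II$, so Theorem \ref{thm:well-posed} supplies the unique $w\in V=\Hd1$ and Theorem \ref{thm:reg-new} upgrades it to $w\in\Hd1\cap H^2\II$. Hence $w'\in H^1\II\hookrightarrow L^2\II$ and $w(0)=w(1)=0$; these are the only properties of $w$ I will use.

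First I would rewrite the transformed variable. Since $2-\al\in(0,1/2)$, by definition ${\DDR0{2-\al}}w=\big({_0I_x^{\al-1}}w\big)'$, and because $w(0)=0$ the commutation identity $\big({_0I_x^{\al-1}}w\big)'={_0I_x^{\al-1}}w'$ from \cite[Lemma 4.1]{JinLazarovPasciak:2013a} (already used in Section \ref{sec:soltheory}) gives ${\DDR0{2-\al}}w={_0I_x^{\al-1}}w'$. As $w'\in L^2\II$ and $\al-1>1/2$, Theorem \ref{thm:fracop}(c) places this function in $\Hdi0{\al-1}$, so it is continuous on $[0,1)$ with continuous zero extension, whence $({\DDR0{2-\al}}w)(0)=0$. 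The boundary conditions for $u$ then follow immediately from \eqref{eqn:reconstruct_u}: $u(1)=({\DDR0{2-\al}}w)(1)(1-1^{\mu})=0$, and $u(0)=({\DDR0{2-\al}}w)(0)=0$ because $x^\mu\to0$ as $x\to0$ (recall $\mu>0$).

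Next I would evaluate ${\DDR0\al}u$ from the definition ${\DDR0\al}u=\big({_0I_x^{2-\al}}u\big)''$, rather than through Theorem \ref{thm:fracop}(b), which would demand $u\in\Hdi0\al$ — more than is conveniently available. Applying ${_0I_x^{2-\al}}$ to \eqref{eqn:reconstruct_u}, using the previous paragraph together with the semigroup property (Theorem \ref{thm:fracop}(a)) to reduce ${_0I_x^{2-\al}}{_0I_x^{\al-1}}w'$ to ${_0I_x^1}w'=w$ (here $w(0)=0$ removes the would-be boundary term), and the elementary formula ${_0I_x^{2-\al}}x^\mu=\Ga(\mu+1)/\Ga(\mu+3-\al)\,x^{\mu+2-\al}$, one obtains ${_0I_x^{2-\al}}u=w-({\DDR0{2-\al}}w)(1)\,\Ga(\mu+1)/\Ga(\mu+3-\al)\,x^{\mu+2-\al}$. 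Both summands lie in $H^2\II$ (the power term because $\mu+2-\al\geq2$ as $\mu\geq\al$), so differentiating twice is legitimate and yields ${\DDR0\al}u=w''-({\DDR0{2-\al}}w)(1)\,c_0\,x^{\mu-\al}\in L^2\II$. Substituting this and \eqref{eqn:reconstruct_u} into $-{\DDR0\al}u+qu$ and regrouping reproduces the left-hand side of \eqref{eqn:fde2}, which equals $f$; combined with the boundary conditions, this shows $u$ solves \eqref{eqn:fde}.

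The power-function integral and the final regrouping are routine. The step needing care is the rigorous evaluation of ${\DDR0\al}u$: one must justify commuting $d/dx$ with ${_0I_x^{\al-1}}$, invoke the semigroup law, and differentiate the resulting expression termwise, all for $w$ of only finite smoothness. This is precisely where $w\in H^2\II\cap\Hd1$ (Theorem \ref{thm:reg-new}) and $w(0)=0$ are indispensable: without the $H^2$ bound, $w''$ and hence ${\DDR0\al}u$ would not be a genuine $L^2\II$ function, and without $w(0)=0$ a boundary term would persist and break the identity.
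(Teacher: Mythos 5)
Your proposal is correct and follows essentially the same route as the paper: both arguments invoke Theorem \ref{thm:reg-new} to get $w\in \Hd 1\cap H^2\II$, use the semigroup property of Theorem \ref{thm:fracop}(a) together with $w(0)=0$ to identify ${\DDR 0 {\alpha}}({\DDR 0 {2-\alpha}}w)$ with $w''$ (you phrase this as applying ${_0I_x^{2-\alpha}}$ to $u$ and differentiating twice, the paper writes the same chain of identities starting from $w''$), substitute into \eqref{eqn:fde2}, and obtain $u(0)=0$ from the membership of ${\DDR0{2-\al}}w$ in $\Hdi0{\al-1}$ with $\al-1>1/2$. Your explicit verification of ${\DDR0\al}x^\mu=c_0x^{\mu-\al}$ and of $({\DDR0{2-\al}}w)(0)=0$ merely spells out steps the paper leaves implicit.
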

\begin{proof}
For $f \in L^2\II$, by Theorem \ref{thm:reg-new}, there exists a unique solution $w\in \Hd 1\cap H^2\II$
to \eqref{eqn:fde2}. By Theorem \ref{thm:fracop}(a), we deduce
\begin{equation*}
w''= ({_0I_x^1}w') ''=({_0I_x^{2-\al}}({_0I_x^{\al-1}}w'))''
=({_0I_x^{2-\al}}({_0I_x^{\al-1}}w)')''={\DDR 0 {\alpha}}({\DDR 0 {2-\alpha}}w).
\end{equation*}
Upon substituting this into \eqref{eqn:fde2}, we get
\begin{equation*}
  - {\DDR 0 {\alpha}}({\DDR 0 {2-\alpha}}w) + q\,{\DDR 0 {2-\alpha}} w + ({\DDR 0 {2-\alpha}}w)(1)
     \left(c_0 x^{\mu-\al}-q(x)x^\mu\right) = f,
\end{equation*}
which together with the definition $ u= {\DDR 0 {2-\alpha}} w-({\DDR 0
{2-\alpha}}w)(1) x^{\mu}$ yields directly $-{\DDR 0 {\alpha}}u +qu=f$ in $L^2\II$. Clearly,
by the definition of $u$, $u(1)=0$, and further by Theorem \ref{thm:fracop} and the fact
that $w\in \Hd 1$, $ {\DDR 0 {2-\alpha} w} - ({\DDR 0 {2-\alpha}} w)(1)x^\mu \in
\widetilde{H}_L^{\alpha-1}\II$, and thus $u(0)=0$. Hence, $u$ is the solution to problem
\eqref{eqn:fde}.
\end{proof}

\subsection{Adjoint problem}\label{secc:adjprob}
To derive $L^2\II$ error estimates for the Galerkin approximation below,
we need the adjoint problem to \eqref{eqn:var}. For any $F \in V^*$, the adjoint
problem is to find $\psi \in V$ such that
\begin{equation}\label{eqn:dual}
    A(\fy,\psi)=\langle \fy,F \rangle \quad\forall \fy \in V.
\end{equation}
In the case of $\langle \fy,F \rangle = (\fy,f)$ for some $f \in L^2\II$, the strong form reads
\begin{equation}\label{eqn:dualstrong}
  \begin{aligned}
    -\psi'' + {\DDR1 {2-\al} }(q\psi) + \Gamma(\alpha-2)^{-1}(1-x)^{\al-3} (p,\psi)&= f \quad  \mbox{ in } D,\\
    \psi(0)=\psi(1)&=0.
  \end{aligned}
\end{equation}
We note that for $\alpha\in(3/2,2)$, the term $(1-x)^{\alpha-3}$ is not a function in $L^1\II$, and it
should be understood in the sense of distribution. In view of the identity $(1-x)^{\alpha-3}=-((1-x)^{
\alpha-2})'/(\alpha-2)$, and the fact that $(1-x)^{\alpha-2}$ belongs to the space $\widetilde{H}^{\alpha-2
+\beta}\II$, with $\beta\in(2-\alpha,1/2)$. Hence, $(1-x)^{\alpha-3}$ lies in the space $H^{\alpha-
3+\beta}\II \subset H^{-1}\II$.

\begin{theorem}\label{thm:reg-adj}
Let Assumption \ref{ass:A} hold, $q\in H^{1}\II$ and $f \in L^2\II$. Then there exists a
unique solution $\psi \in H^{\al-1/2}\II \cap \Hd 1$ to problem
\eqref{eqn:dual} and it satisfies for $\beta\in(2-\alpha,1/2)$
\begin{equation*}
  \| \psi \|_{H^{\al-1+\beta}\II} \le c \|f\|_{L^2\II}.
\end{equation*}
\end{theorem}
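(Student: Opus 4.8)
The plan is to follow, for the adjoint equation, the two-step route of Theorem~\ref{thm:reg-new}: first obtain a weak solution in $V=\Hd1$ with the energy bound, then bootstrap its regularity from the strong form \eqref{eqn:dualstrong}. The first step adds nothing new: the stability proof of Theorem~\ref{thm:well-posed} applies equally to $A(\cdot,\cdot)$ with its two arguments interchanged (its coercive part $a(\cdot,\cdot)$ is symmetric, and the compact operator induced by $b(\cdot,\cdot)$ is simply replaced by its adjoint), so under Assumption~\ref{ass:A} the adjoint problem \eqref{eqn:dual} has a unique solution $\psi\in V$, and $\|\psi\|_V\le c\|f\|_{L^2\II}$ when $\langle\fy,F\rangle=(\fy,f)$.

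The substance is the regularity pickup. First I would justify the strong form \eqref{eqn:dualstrong}: testing \eqref{eqn:dual} against $\fy\in C_0^\infty\II$ and transferring the fractional operators off $\fy$ --- via $({_0I_x^{\al-1}}\fy)'={_0I_x^{\al-1}}\fy'$, the fact that $_xI_1^{\al-1}$ is the $L^2\II$-adjoint of $_0I_x^{\al-1}$, and one integration by parts (with no boundary contribution, since $\fy(0)=\fy(1)=0$) --- shows that $\psi$ satisfies $-\psi''=g$ in $D$, with
\[
 g:=f-{\DDR1{2-\al}}(q\psi)-\Gamma(\al-2)^{-1}(1-x)^{\al-3}(p,\psi);
\]
the last term arises from $({\DDR0{2-\al}}\fy)(1)=\Gamma(\al-1)^{-1}\int_0^1(1-t)^{\al-2}\fy'(t)\,dt$ after a further integration by parts, i.e.\ via $(1-x)^{\al-3}=-((1-x)^{\al-2})'/(\al-2)$. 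It then remains to bound the three contributions to $g$ by $\|\psi\|_V\le c\|f\|_{L^2\II}$. The scalar satisfies $|(p,\psi)|\le\|p\|_{L^2\II}\|\psi\|_{L^2\II}\le c\|f\|_{L^2\II}$ (with $p\in L^2\II$, as in Theorem~\ref{thm:well-posed}). Since $q\in H^1\II\subset L^\infty\II$ and $\psi\in\Hd1\subset L^\infty\II$, Lemma~\ref{lem:Hmulti} gives $q\psi\in\Hd1$; then ${\DDR1{2-\al}}(q\psi)=-{_xI_1^{\al-1}}\big((q\psi)'\big)$ (commutation, valid since $q\psi$ vanishes at the endpoints), and the boundedness of $_xI_1^{\al-1}$ from $L^2\II$ into $H^{\al-1}\II$ (Theorem~\ref{thm:fracop}(c)) yields ${\DDR1{2-\al}}(q\psi)\in H^{\al-1}\II\hookrightarrow L^2\II$, with norm controlled by $c\|q\psi\|_{\Hd1}\le c\|q\|_{H^1\II}\|f\|_{L^2\II}$. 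Finally, as recorded just before the theorem, $(1-x)^{\al-3}\in H^{\al-3+\beta}\II$ for every $\beta\in(2-\al,1/2)$. Collecting these, $g\in H^{\al-3+\beta}\II$ with $\|g\|_{H^{\al-3+\beta}\II}\le c\|f\|_{L^2\II}$, the Sobolev exponent being dictated by the $(1-x)^{\al-3}$ term.

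To finish, I would apply one-dimensional elliptic regularity to $-\psi''=g$ with $\psi(0)=\psi(1)=0$: since $\al-3+\beta>-1$, this gives $\psi\in H^{\al-1+\beta}\II\cap\Hd1$ together with $\|\psi\|_{H^{\al-1+\beta}\II}\le c\|g\|_{H^{\al-3+\beta}\II}\le c\|f\|_{L^2\II}$ for every admissible $\beta$, which is the claimed estimate (uniqueness having been settled in the first step). I expect the main obstacle to be the handling of $(1-x)^{\al-3}$, which is not in $L^1\II$: one has to give \eqref{eqn:dualstrong} a rigorous distributional meaning (the integration-by-parts identity above), and then locate $(1-x)^{\al-3}$ exactly in $H^{\al-3+\beta}\II$ with $\beta<1/2$. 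This is precisely where the hypothesis $\al>3/2$ is genuinely used --- it is what makes the admissible range $(2-\al,1/2)$ for $\beta$ nonempty, equivalently $(1-x)^{\al-3}\in H^{-1}\II$, so that the elliptic-regularity step applies at all, and it is also what caps the regularity pickup at the level $H^{\al-1/2}\II$. Everything else is routine manipulation of fractional operators together with standard elliptic estimates.
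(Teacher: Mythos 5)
Your proposal is correct and follows essentially the same route as the paper: existence and uniqueness via Theorem \ref{thm:well-posed} (using Assumption \ref{ass:A}$(\mathrm{a}^\ast)$), then rewriting the adjoint problem as $-\psi''=g$ with $g=f-{\DDR1{2-\al}}(q\psi)-\Gamma(\al-2)^{-1}(1-x)^{\al-3}(p,\psi)\in H^{\al-3+\beta}\II$, and concluding by elliptic regularity. You simply spell out details the paper leaves implicit (the distributional derivation of \eqref{eqn:dualstrong}, Lemma \ref{lem:Hmulti} for $q\psi$, and the placement of $(1-x)^{\al-3}$), all of which are consistent with the paper's argument.
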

\begin{proof}
The unique existence of a solution $\psi\in V$ follows from Theorem \ref{thm:well-posed}.
To see the regularity, we rewrite the problem into
\begin{equation*}
    -\psi'' = - \DDR1 {2-\al} (q\psi) - \Gamma(\al-2)^{-1}(1-x)^{\al-3} (p,\psi) + f.
\end{equation*}
Under the given assumptions on the right hand side $f$ and the potential term $q$, and by the preceding discussions,
the right hand side belongs to $H^{\alpha-3+\beta}\II$. Thus by the standard elliptic regularity theory
\cite{GilbargTrudinger:2001}, the desired estimate follows.
\end{proof}

\begin{remark}\label{rmk:source-adj}
In Theorem \ref{thm:reg-adj}, the regularity assumption on the source term $f$ can be relaxed to $f\in H^{\al-3+\beta}\II$.
\end{remark}

Last we recall Green's function to the adjoint problem, i.e., for all $x\in D$
\begin{equation*}
  \begin{aligned}
    -G''(x,y) +  {_{y}^{ \kern -.1em R} \kern -.2em D^{2-\al}_{\kern -.1em 1}} (q G(x,y)) + \Gamma(\al-2)^{-1}(1-x)^{\al-3} (p,\psi)&= \delta_x(y)\quad  \mbox{ in } D,\\
    G(x,0)=G(x,1)&=0.
  \end{aligned}
\end{equation*}
By Sobolev embedding theorem, $\delta_x\in H^{-1+\beta}\II\subset H^{-1}\II$, $\beta\in(2-\alpha,1/2)$, and
thus the existence and uniqueness of $G(x,\cdot)\in \Hd 1$ follows directly from the stability
of the variational formulation. Moreover, by the argument in the proof of Theorem
\ref{thm:reg-adj} and Remark \ref{rmk:source-adj}, $G(x,\cdot)\in H^{\alpha-1+\beta}\II$.

\section{Galerkin finite element method}\label{sec:fem}
The variational formulation \eqref{eqn:var} enables us to develop a Galerkin FEM
for problem \eqref{eqn:fde}: first we approximate the solution $w$ to \eqref{eqn:fde2} by a Galerkin
finite element approximation $w_h$, and then reconstruct the solution to \eqref{eqn:fde} using
\eqref{eqn:reconstruct_u}, i.e.,
\begin{equation}\label{eqn:dicrecon}
  u_h={\DDR 0 {2-\alpha}} w_h-({\DDR 0 {2-\alpha}}w_h)(1) x^{\mu}.
\end{equation}
To this end, we divide the domain $D$ into quasi-uniform partitions with a maximum length $h$, and let
$V_h$ denote the resulting space of continuous piecewise polynomials of degree at most $k+1$, vanishing
at both end points of $D$. Thus, the functions in $V_h\subset \Hd1$ are piecewise linear if $k=0$, and
piecewise quadratic if $k = 1$. Since we consider only a right hand side $f\in L^2\II$ or $f\in H^1\II$,
we shall focus on the choice $k=0,1$ in our discussion. The space $V_h$ has the following approximation properties.
\begin{lemma}\label{fem-interp-U}
If $v \in H^\gamma\II \cap \Hd {1}$ with $ 1 \le \gamma \le 3$, then for $k=0,1$
\begin{equation}\label{approx-Uh}
\inf_{v_h\in V_h} \|v -v_h \|_{\Hd1} \le ch^{\min(\gamma-1,k+1)} \|v\|_{H^\gamma\II}.
\end{equation}
\end{lemma}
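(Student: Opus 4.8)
The plan is to reduce the assertion to classical finite element interpolation theory. The first observation is that on $\Hd1$ the norm $\|\cdot\|_{\Hd1}$ coincides with the ordinary $H^1\II$ norm: the extension by zero of $v\in\Hd1$ to $\RR$ has no jump at the endpoints of $D$, so its weak derivative is the zero extension of $v'$, whence $\|\widetilde v\|_{H^1(\RR)}=\|v\|_{H^1\II}$. Consequently it suffices to exhibit one competitor $v_h\in V_h$ obeying $\|v-v_h\|_{H^1\II}\le c\,h^{\min(\gamma-1,k+1)}\|v\|_{H^\gamma\II}$. I would take $v_h=I_hv$, the nodal (Lagrange) interpolant of $v$ onto the continuous piecewise polynomials of degree $\le k+1$ subordinate to the given partition. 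This choice is legitimate: in one space dimension $H^\gamma\II\hookrightarrow C(\overline{D})$ for every $\gamma\ge1>1/2$, so the point values defining $I_hv$ make sense; and since the endpoints of $D$ are interpolation nodes while $v(0)=v(1)=0$, the interpolant vanishes there, i.e.\ $I_hv\in V_h\subset\Hd1$ is admissible in the infimum.

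Next I would establish, via the classical Bramble--Hilbert and affine-scaling argument, the integer-order estimates
\[
  \|v-I_hv\|_{H^1\II}\le c\,h^{m-1}\,\|v\|_{H^m\II},\qquad m=1,\dots,k+2.
\]
On the reference element $\widehat\tau=(0,1)$ the error operator $I-\widehat I_h$ annihilates $\mathbb{P}_{k+1}\supseteq\mathbb{P}_{m-1}$, and $\widehat I_h$ is bounded on $H^m(\widehat\tau)$ for every integer $m\ge1$ (again by the one-dimensional embedding into continuous functions), so $\|\widehat v-\widehat I_h\widehat v\|_{H^1(\widehat\tau)}\le c\inf_{p\in\mathbb{P}_{m-1}}\|\widehat v-p\|_{H^m(\widehat\tau)}\le c\,|\widehat v|_{H^m(\widehat\tau)}$ by the Bramble--Hilbert lemma. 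An affine change of variables to a mesh element of diameter $h_\tau\le h$, followed by summation of the squared local errors — for integer $m$ the $H^m$ seminorm is additive over the partition — yields the displayed inequality.

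Finally I would cover the non-integer range $\gamma\in(1,3]$ by real interpolation of the operator $I-I_h$ itself. The integer estimates say that $I-I_h\in\mathcal{L}(H^m\II;H^1\II)$ with norm at most $c\,h^{m-1}$ for consecutive integers $m$; since $[H^m\II,H^{m+1}\II]_{\theta,2}=H^{m+\theta}\II$ with equivalent norms, the $K$-method gives $\|v-I_hv\|_{H^1\II}\le c\,h^{\gamma-1}\|v\|_{H^\gamma\II}$ for all $1\le\gamma\le k+2$. For the leftover range $k+2<\gamma\le3$ — which occurs only when $k=0$, i.e.\ $\gamma\in(2,3]$ — I would use the $m=k+2$ estimate together with the trivial bound $\|v\|_{H^{k+2}\II}\le\|v\|_{H^\gamma\II}$, producing the saturated rate $h^{k+1}$. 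Combining the cases, the exponent of $h$ is precisely $\min(\gamma-1,k+1)$, which is the claim.

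The argument is standard, so there is no genuine obstacle; the only points that deserve a little care are (i) identifying the $\Hd1$ norm with the $H^1\II$ norm, so that $I_hv$ is an admissible element of $V_h$ and the estimate is stated in the right norm, and (ii) treating fractional $\gamma$ by interpolating the error operator globally rather than element by element, which sidesteps the non-locality of the Sobolev--Slobodeckij seminorm of order $\gamma$. One could also simply invoke the general interpolation estimates of \cite{ern-guermond}.
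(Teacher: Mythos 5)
Your proposal is correct: the paper states this lemma without proof, treating it as a standard approximation property of $V_h$ (as in, e.g., \cite{ern-guermond}), and your argument --- identifying the $\Hd1$ norm with the $H^1\II$ norm on $H_0^1\II$, taking the nodal interpolant as the competitor, deriving the integer-order rates by Bramble--Hilbert and scaling, and then covering fractional $\gamma$ by real interpolation of the error operator with saturation at $h^{k+1}$ --- is precisely the classical proof being invoked. The two points you flag as needing care (admissibility of $I_hv$ in $V_h$ and the global, rather than elementwise, treatment of non-integer $\gamma$) are indeed the only delicate steps, and you handle both correctly.
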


The Galerkin FEM is to find $w_h\in V_h$ such that
\begin{equation}\label{eqn:fem}
  A(w_h,v_h) = (f,v_h)\quad \forall v_h\in V_h.
\end{equation}
The computation of the stiffness matrix and mass matrix is given in Appendix \ref{app:stiff}.
We next analyze the stability of the discrete formulation \eqref{eqn:fem}, and derive (suboptimal) error
estimates for the approximations $w_h$ and $u_h$. First we have the following stability result. The proof
is identical with that in \cite[Lemma 5.2]{JinLazarovPasciak:2013a}, using a kick-back trick analogous
to Schatz \cite{Schatz-1974}. We sketch the proof for completeness.
\begin{theorem}\label{thm:disc-wellposed}
Let Assumption \ref{ass:A} hold, $f \in L^2\II$, and $q\in L^\infty\II$. Then there is
an $h_0$ such that for all $h\le h_0$ the finite element problem \eqref{eqn:fem}
has a unique solution $w_h\in V_h$, and further
\begin{equation}\label{eq:stability}
  \|w_h\|_{H^1\II}\leq c\|f\|_{L^2\II}.
\end{equation}
\end{theorem}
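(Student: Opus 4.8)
The plan is to establish a discrete inf--sup condition for $A(\cdot,\cdot)$ on $V_h\times V_h$ that holds uniformly for $h\le h_0$, following the Schatz kick-back argument of \cite[Lemma 5.2]{JinLazarovPasciak:2013a}. The three ingredients are: coercivity of the principal part $a(\cdot,\cdot)$ on $V$ (hence on $V_h\subset V$), the fact that the perturbation $b(\cdot,\cdot)$ is of lower order in the sense of \eqref{eqn:boundb}, and a duality (Aubin--Nitsche) estimate exploiting the regularity pickup of the adjoint problem. Once the discrete inf--sup bound is available, injectivity of the discrete operator together with $\dim V_h<\infty$ gives unique solvability of \eqref{eqn:fem}, and the inf--sup bound applied to the right-hand side $(f,\cdot)$ yields \eqref{eq:stability}.

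First I would record that $a(v_h,v_h)=\|v_h'\|_{L^2\II}^2\ge c\|v_h\|_V^2$ by Poincar\'e, and that \eqref{eqn:boundb} gives $|b(w_h,w_h)|\le c\|w_h\|_V\|w_h\|_{L^2\II}$. Writing $a=A-b$ and abbreviating $M:=\sup_{v_h\in V_h}A(w_h,v_h)/\|v_h\|_V$, one gets
\begin{equation*}
  \|w_h\|_V^2 \le A(w_h,w_h) + c\|w_h\|_V\|w_h\|_{L^2\II} \le M\|w_h\|_V + c\|w_h\|_V\|w_h\|_{L^2\II},
\end{equation*}
so $\|w_h\|_V\le M + c\|w_h\|_{L^2\II}$, and it remains only to bound $\|w_h\|_{L^2\II}$ by $M$ plus a term that is small in $h$.

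The core step is the duality argument. Given $w_h\in V_h$, let $\psi\in V$ solve the adjoint problem $A(\fy,\psi)=(\fy,w_h)$ for all $\fy\in V$ (uniquely solvable by Theorem \ref{thm:well-posed}, since $w_h\in L^2\II\subset V^*$). By the analysis of the adjoint strong form \eqref{eqn:dualstrong} underlying Theorem \ref{thm:reg-adj} and Remark \ref{rmk:source-adj}, the datum $w_h$ lies in a negative Sobolev space no worse than $H^{-1+\beta}\II$, so $\psi\in H^{1+s}\II\cap\Hd1$ with $\|\psi\|_{H^{1+s}\II}\le c\|w_h\|_{L^2\II}$ for some $s=s(\al)\in(0,1/2)$. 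Choosing $\psi_h\in V_h$ as in Lemma \ref{fem-interp-U} with $\|\psi-\psi_h\|_V\le ch^{s}\|\psi\|_{H^{1+s}\II}\le ch^{s}\|w_h\|_{L^2\II}$, and using the boundedness of $A$ on $V\times V$ together with $\|\psi_h\|_V\le c\|\psi\|_V\le c\|w_h\|_{L^2\II}$, the choice $\fy=w_h$ gives
\begin{equation*}
  \|w_h\|_{L^2\II}^2 = A(w_h,\psi) = A(w_h,\psi-\psi_h)+A(w_h,\psi_h) \le c\|w_h\|_V h^{s}\|w_h\|_{L^2\II} + cM\|w_h\|_{L^2\II}.
\end{equation*}
Hence $\|w_h\|_{L^2\II}\le ch^{s}\|w_h\|_V + cM$. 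Inserting this into the bound of the previous paragraph yields $\|w_h\|_V\le c_1M + c_2h^{s}\|w_h\|_V$, and choosing $h_0$ so small that $c_2h_0^{s}\le 1/2$ lets the $\|w_h\|_V$ term kick back: $\|w_h\|_V\le 2c_1 M$ for all $h\le h_0$. This is the discrete inf--sup condition; it forces $w_h=0$ whenever $A(w_h,\cdot)\equiv 0$ on $V_h$, so the square system \eqref{eqn:fem} is uniquely solvable, and taking $A(w_h,\cdot)=(f,\cdot)$ with $|(f,v_h)|\le c\|f\|_{L^2\II}\|v_h\|_V$ gives $\|w_h\|_{H^1\II}\le c\|f\|_{L^2\II}$.

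The main obstacle is making the duality step rigorous with only $q\in L^\infty\II$: one must verify that the adjoint solution genuinely gains regularity beyond $\Hd1$ (i.e. $s>0$), which amounts to controlling the nonlocal terms $\DDR 1 {2-\al}(q\psi)$ and $(1-x)^{\al-3}(p,\psi)$ of \eqref{eqn:dualstrong} in $H^{-1+\beta}\II$ exactly as in the proof of Theorem \ref{thm:reg-adj}; everything else is the routine coercivity/boundedness bookkeeping and the interpolation estimate of Lemma \ref{fem-interp-U}.
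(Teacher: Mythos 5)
Your argument is correct and arrives at the same discrete inf--sup condition \eqref{inf-sup-disc}, but the kick-back is executed on a different side of the bilinear form than in the paper. The paper starts from the continuous inf--sup condition \eqref{inf-suprl} and splits the \emph{test} function as $\fy=(\fy-R_h\fy)+R_h\fy$ with $R_h$ the Ritz projection of the Laplacian; since $a(v_h,\fy-R_h\fy)=0$, the perturbation reduces to $b(v_h,\fy-R_h\fy)\le c\|v_h'\|_{L^2\II}\|\fy-R_h\fy\|_{L^2\II}\le c_1h\|v_h'\|_{L^2\II}\|\fy'\|_{L^2\II}$, so the only regularity input is the (trivial in 1D) $H^2\II$ regularity of the Poisson problem, and the absorbable term carries a full power of $h$. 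You instead run the classical Schatz argument on the \emph{trial} side: coercivity of $a(\cdot,\cdot)$ gives $\|w_h\|_V\le cM+c\|w_h\|_{L^2\II}$ with $M:=\sup_{v_h\in V_h}A(w_h,v_h)/\|v_h\|_V$, and an Aubin--Nitsche duality with the full adjoint problem gives $\|w_h\|_{L^2\II}\le ch^{s}\|w_h\|_V+cM$ with $s=\al-2+\beta\in(0,1/2)$. This works, but it imports the adjoint regularity theory, and you correctly flag the resulting mismatch: Theorem \ref{thm:reg-adj} is stated under $q\in H^1\II$ while the present theorem assumes only $q\in L^\infty\II$. The gap is fillable exactly as you indicate: the limiting term in the adjoint right-hand side is $(1-x)^{\al-3}(p,\psi)\in H^{\al-3+\beta}\II$, which does not involve $q$ at all, and the nonlocal term $\DDR 1 {2-\al}(q\psi)$ needs only $q\psi\in L^2\II$ (so $q\in L^\infty\II$ suffices) to land in $H^{\al-2}\II\subset H^{\al-3+\beta}\II$; hence $\psi\in H^{\al-1+\beta}\II$ with $\|\psi\|_{H^{\al-1+\beta}\II}\le c\|w_h\|_{L^2\II}$ and your $h^{s}$ kick-back closes. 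The net comparison: the paper's route is more economical in hypotheses (it never touches the adjoint problem and gets an $O(h)$ perturbation), while yours is the standard textbook form of the Schatz argument, transfers to settings where no convenient Ritz projection is at hand, and only costs the weaker absorbable factor $h^{s}$, which is still ample for choosing $h_0$.
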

\begin{proof}
We first define the Ritz projection $R_h: V \to V_h$ by $((R_h \fy)',\psi')=(\fy',\psi') $
for all $\psi \in V_h$. Then for $v_h \in V_h \subset V$ we have
$$
c_0 \| v_h' \|_{L^2\II}\le \sup_{\fy \in V} \frac{A(v_h,\fy)}{\| \fy' \|_{L^2\II}}
\leq  \sup_{\fy \in V} \frac{A(v_h,\fy-R_h\fy)}{\| \fy' \|_{L^2\II}}
+  \sup_{\fy \in V} \frac{A(v_h,R_h\fy)}{\| \fy' \|_{L^2\II}}=:I+II.
$$
Then by \eqref{eqn:boundb} and Theorem \ref{thm:reg-new} we have
$$
I= \sup_{\fy \in V} \frac{b(v_h,\fy-R_h\fy)}{\| \fy' \|_{L^2\II}}
\le  c\sup_{\fy \in V} \frac{\| v_h' \|_{L^2\II}  \| \fy-R_h\fy \|_{L^2\II}}{\| \fy' \|_{L^2\II}}\le c_1h\| v_h' \|_{L^2\II}.
$$
Further the second term $II$ could be bounded as follows by using
the inequality $\| (R_h \fy)' \|_{L^2\II} \le \| \fy' \|_{L^2\II}$ and
the fact that $R_h \fy \in V_h$
$$II \le \sup_{\fy \in V} \frac{A(v_h,R_h\fy)}{\| (R_h\fy)' \|_{L^2\II}}
\le \sup_{\fy_h \in V_h} \frac{A(v_h,\fy_h)}{\| \fy_h' \|_{L^2\II}}.$$
Now by choosing $h_0=c_0/(2c_1)$ we derive the following {\it inf-sup} condition:
\begin{equation}\label{inf-sup-disc}
    \|  v_h \|_V \le c \sup_{\fy_h\in V_h}  \frac{A(v_h,\fy_h)}{\| \fy_h\|_V}.
\end{equation}
This shows that the corresponding stiffness matrix is nonsingular and the
existence of a unique discrete solution $u_h\in V_h$ follows.
The estimate \eqref{eq:stability} is a direct consequence of \eqref{inf-sup-disc} and this completes the proof.
\end{proof}

Now we turn to the error analysis, and focus on the case $f\in H^1\II$.
\begin{theorem}\label{thm:error-w}
Let Assumption \ref{ass:A} hold, and $f,q\in H^1\II $. For the FEM of piecewise $(k+1)$'s degree
polynomials (k=0,1), there is an $h_0$ such that for all $h\le h_0$, the solution $w_h$ to
problem \eqref{eqn:fem} satisfies with $\beta\in (2-\alpha,1/2)$
\begin{equation*}
   \|w-w_h\|_{L^2\II} + h^{\al-2+\beta}\|(w-w_h)'\|_{L^2\II}
   \le C h^{\al+k-1+\beta} \|f\|_{H^1\II}.
\end{equation*}
\end{theorem}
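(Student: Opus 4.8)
The plan is to run the standard duality (Aubin--Nitsche) argument, combined with the discrete inf-sup stability from Theorem \ref{thm:disc-wellposed}, but carried out in the fractional-Sobolev scale dictated by the adjoint problem. First I would establish the energy-norm (i.e. $H^1\II$) estimate. By Galerkin orthogonality $A(w-w_h,v_h)=0$ for all $v_h\in V_h$, and the discrete inf-sup condition \eqref{inf-sup-disc}, a standard Céa-type argument gives
\begin{equation*}
  \|w-w_h\|_{V}\le c\inf_{v_h\in V_h}\|w-v_h\|_{V}.
\end{equation*}
Since $f,q\in H^1\II$, Theorem \ref{thm:reg-new} yields $w\in H^3\II\cap\Hd1$ with $\|w\|_{H^3\II}\le c\|f\|_{H^1\II}$, so Lemma \ref{fem-interp-U} with $\gamma=3$ gives $\|w-w_h\|_{V}\le ch^{k+1}\|f\|_{H^1\II}$ for $k=0,1$. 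This already controls the gradient term, since $h^{k+1}\le h^{\al+k-1+\beta}$ would be false in general --- wait, here one must be careful: $\al-2+\beta<1$, so $h^{\al+k-1+\beta}=h^{\al-2+\beta}\cdot h^{k+1}$, and the claimed bound on $\|(w-w_h)'\|_{L^2\II}$ is exactly $h^{k+1}\|f\|_{H^1\II}$ after dividing by $h^{\al-2+\beta}$; so the energy estimate gives precisely the gradient term.

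Next, for the $L^2\II$ bound I would use the adjoint problem \eqref{eqn:dual}. Set $e=w-w_h$ and let $\psi\in V$ solve $A(\fy,\psi)=(\fy,e)$ for all $\fy\in V$; by Theorem \ref{thm:reg-adj} (with the source $e\in L^2\II$), $\psi\in H^{\al-1+\beta}\II\cap\Hd1$ with $\|\psi\|_{H^{\al-1+\beta}\II}\le c\|e\|_{L^2\II}$. Taking $\fy=e$ and using Galerkin orthogonality,
\begin{equation*}
  \|e\|_{L^2\II}^2=A(e,\psi)=A(e,\psi-\psi_h)\le c\|e\|_V\,\|\psi-\psi_h\|_V
\end{equation*}
for any $\psi_h\in V_h$, where the last inequality is the boundedness of $A(\cdot,\cdot)$ on $V\times V$, which follows from $a(\cdot,\cdot)$ being trivially bounded and \eqref{eqn:boundb}. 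Choosing $\psi_h$ to be the best approximation and invoking Lemma \ref{fem-interp-U} with $\gamma=\al-1+\beta\in(1,3/2)$ gives $\|\psi-\psi_h\|_V\le ch^{\al-2+\beta}\|\psi\|_{H^{\al-1+\beta}\II}\le ch^{\al-2+\beta}\|e\|_{L^2\II}$. Combining with the energy estimate $\|e\|_V\le ch^{k+1}\|f\|_{H^1\II}$ and cancelling one factor of $\|e\|_{L^2\II}$ yields $\|e\|_{L^2\II}\le ch^{\al+k-1+\beta}\|f\|_{H^1\II}$, as claimed.

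The main subtlety --- and the step I expect to need the most care --- is the interplay of the two regularity exponents. The adjoint solution has only $H^{\al-1+\beta}$ regularity (not $H^2$), because the terms $\DDR1{2-\al}(q\psi)$ and especially the distributional factor $(1-x)^{\al-3}$ on the right-hand side of \eqref{eqn:dualstrong} sit only in $H^{\al-3+\beta}\II$; this is precisely why the duality gain is $h^{\al-2+\beta}$ rather than the classical $h^{k+1}$. One must also check that $\al-2+\beta>0$ so that the duality argument genuinely improves on the energy estimate (true since $\beta>2-\al$), and that $\al-1+\beta\le 3$ and $\ge 1$ so Lemma \ref{fem-interp-U} applies with $\gamma=\al-1+\beta$ (true for $\al\in(3/2,2)$, $\beta\in(2-\al,1/2)$). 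Everything else --- Galerkin orthogonality, Céa's lemma via \eqref{inf-sup-disc}, boundedness of $A$ --- is routine once these regularity indices are pinned down. Finally, the requirement $h\le h_0$ is inherited from Theorem \ref{thm:disc-wellposed}, which supplies the discrete inf-sup condition used throughout.
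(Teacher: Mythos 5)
Your proposal is correct and follows essentially the same route as the paper: C\'ea's lemma via the discrete inf-sup condition \eqref{inf-sup-disc} together with $w\in H^3\II$ from Theorem \ref{thm:reg-new} for the energy estimate, then Nitsche's trick with the adjoint regularity $\psi\in H^{\al-1+\beta}\II$ from Theorem \ref{thm:reg-adj} supplying the extra factor $h^{\al-2+\beta}$. Your explicit bookkeeping of why the gradient term in the stated estimate is exactly the energy bound, and why $\al-2+\beta>0$ makes the duality gain genuine, matches the paper's (more tersely written) argument.
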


\begin{proof}
The error estimate in the $\Hd{1}$-norm follows directly from C\'{e}a's lemma,
\eqref{inf-sup-disc} and the Galerkin orthogonality. Specifically, for all $h\le h_0$ and
any $ \chi \in V_h$ we have
\begin{equation*}
   \| w_h-\chi \|_{V}
    \le c\sup_{v_h \in V_h} \frac{A(w_h-\chi,v_h)}{ \|v_h\|_V}
    \le c\sup_{v_h\in V_h} \frac{A(w-\chi,v_h)}{ \|v_h\|_V}
    \le c\| w-\chi \|_{V}.
\end{equation*}
Then the desired $\Hd1$-estimate follows from Lemma the triangle inequality and \ref{fem-interp-U} by
\begin{equation*}
\begin{split}
    \| w-w_h \|_V &\le c\inf_{\chi\in V_h} \| w-\chi \|_{V} \le ch^{k+1}\| f \|_{H^1\II}.
\end{split}
\end{equation*}
Then we apply Nitsche's trick to establish the $L^2\II$-error estimate.
To this end, we consider the adjoint problem \eqref{eqn:dual} with $f=w-w_h$, i.e.
\begin{equation*}
\begin{split}
    \| w-w_h \|_{L^2\II}^2 = A(w-w_h, \psi)= A(w-w_h,\psi-\psi_h),
\end{split}
\end{equation*}
for any $w_h\in V_h$. Then Lemma \ref{fem-interp-U} and Theorem \ref{thm:reg-adj} yield
for any $\beta\in[1-\alpha/2,1/2)$
\begin{equation*}
\begin{split}
    \| w-w_h \|_{L^2\II}^2 &\le c\|w-w_h\|_{V}\inf_{\psi_h\in V_h}\|\psi-\psi_h\|_{V}\\
    &\le ch^{\al+k-1+\beta} \| f \|_{H^1\II} \| w-w_h \|_{L^2\II}.
\end{split}
\end{equation*}
This completes the proof of the theorem.
\end{proof}

Below we analyze the convergence of the approximation $u_h$, reconstructed from $w_h$ using \eqref{eqn:dicrecon}. We divide the convergence
analysis into several lemmas. First we estimate the leading term ${\DDR 0 {2-\alpha}}w_h(x)$.
\begin{lemma}\label{thm:error-I}
Let the assumptions in Theorem \ref{thm:error-w} hold, and $w$ and $w_h$ be solutions of
\eqref{eqn:var} and \eqref{eqn:fem}, respectively. Then for $e=w-w_h$, there holds
with $\beta\in (2-\alpha,1/2)$
\begin{equation*}
   \|\DDR 0 {2-\alpha}e\|_{L^2\II} \le ch^{\al+k-1+\beta} \|f\|_{H^1\II}.
\end{equation*}
\end{lemma}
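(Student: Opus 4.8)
The plan is to bound $\|\DDR 0 {2-\alpha} e\|_{L^2\II}$ directly in terms of $\|e'\|_{L^2\II}$ using the mapping properties already established, and then invoke the $\Hd 1$-error estimate from Theorem \ref{thm:error-w}. Specifically, for $e = w - w_h \in V = \Hd 1$, the identity $({\,_0\hspace{-0.3mm}I^{\al-1}_x} e)' = ({\,_0\hspace{-0.3mm}I^{\al-1}_x} e')$ (cited from \cite[Lemma 4.1]{JinLazarovPasciak:2013a} and already used above) gives $\DDR 0 {2-\alpha} e = {\,_0\hspace{-0.3mm}I^{\al-1}_x} e'$. Since $e'\in L^2\II = \Hdi 0 0$, Theorem \ref{thm:fracop}(c) with $s=0$, $\beta = \alpha-1$ shows ${\,_0\hspace{-0.3mm}I^{\al-1}_x}$ is bounded from $L^2\II$ into $\Hdi 0 {\al-1}\hookrightarrow L^2\II$, so
\[
  \|\DDR 0 {2-\alpha} e\|_{L^2\II} = \|{\,_0\hspace{-0.3mm}I^{\al-1}_x} e'\|_{L^2\II} \le c\,\|e'\|_{L^2\II}.
\]
Alternatively one may simply quote the bound $\|\DDR 0 {2-\alpha}\psi\|_{L^2\II}\le c\|\psi'\|_{L^2\II}$ for $\psi\in V$ already displayed in Section \ref{sec:soltheory} just before \eqref{eqn:boundb}.

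It then remains to insert the gradient estimate. Theorem \ref{thm:error-w} gives, for $f,q\in H^1\II$ and $h\le h_0$, the bound $\|(w-w_h)'\|_{L^2\II}\le C h^{\al+k-1+\beta}\|f\|_{H^1\II}$ with $\beta\in(2-\alpha,1/2)$ — note that the $h^{\al-2+\beta}$ weight in front of the gradient term in that theorem means the gradient error itself is $O(h^{(\al+k-1+\beta)-(\al-2+\beta)}) = O(h^{k+1})$, which is even stronger, but the stated estimate $h^{\al+k-1+\beta}$ already follows from keeping the weighted form. Combining the two displayed inequalities yields
\[
  \|\DDR 0 {2-\alpha} e\|_{L^2\II} \le c\,\|e'\|_{L^2\II} \le c\,h^{\al+k-1+\beta}\,\|f\|_{H^1\II},
\]
which is exactly the claimed bound.

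I do not anticipate a genuine obstacle here: the only point requiring mild care is bookkeeping the exponents — making sure the $h$-power quoted from Theorem \ref{thm:error-w} is applied to the correct (gradient) term and that $\beta$ lies in the admissible range $(2-\alpha,1/2)$ throughout, which it does since $\alpha>3/2$ forces $2-\alpha<1/2$. Everything else is an immediate consequence of the continuity of the fractional integral operator from Theorem \ref{thm:fracop} together with the already-proved finite element error estimate.
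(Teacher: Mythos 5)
Your proof has a genuine gap, and it lies exactly in the exponent bookkeeping you flag as the ``only point requiring mild care.'' The direct bound $\|\DDR 0 {2-\alpha} e\|_{L^2\II}\le c\|e'\|_{L^2\II}$ is correct, but Theorem \ref{thm:error-w} gives only $\|e'\|_{L^2\II}\le Ch^{k+1}\|f\|_{H^1\II}$: dividing the weighted estimate $h^{\al-2+\beta}\|e'\|_{L^2\II}\le Ch^{\al+k-1+\beta}\|f\|_{H^1\II}$ by $h^{\al-2+\beta}$ yields precisely $O(h^{k+1})$ and nothing more, and one cannot ``keep the weighted form'' to read off $\|e'\|_{L^2\II}\le Ch^{\al+k-1+\beta}\|f\|_{H^1\II}$ since $\al-2+\beta>0$. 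Moreover your comparison of the two rates is reversed: because $\beta>2-\al$ we have $\al-1+\beta>1$, hence $\al+k-1+\beta>k+1$ and $h^{\al+k-1+\beta}$ is the \emph{stronger} (faster) rate. So your argument proves only $\|\DDR 0 {2-\alpha}e\|_{L^2\II}\le ch^{k+1}\|f\|_{H^1\II}$, which falls short of the claim by a factor $h^{\al-2+\beta}$; and no improvement of the gradient estimate is available, since $O(h^{k+1})$ is already the optimal $H^1$-rate for degree-$(k+1)$ elements.

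The missing idea is that $\DDR 0 {2-\alpha}$ is an operator of order $2-\al<1/2$, so $\|\DDR 0 {2-\alpha}e\|_{L^2\II}$ should be controlled by the weaker norm $\|e\|_{H^{2-\al}\II}$ (using that $\Hd{2-\al}=H^{2-\al}\II$ and the norm equivalence for $2-\al\in(0,1/2)$), and this intermediate norm must then be estimated by a duality argument rather than by $\|e'\|_{L^2\II}$. Writing $\|e\|_{H^{2-\al}\II}=\sup_{\fy}\langle e,\fy\rangle/\|\fy\|_{H^{-2+\al}\II}$, solving the adjoint problem $A(v,g_\fy)=\langle v,\fy\rangle$ with $g_\fy\in H^{\al-1+\beta}\II$ (Theorem \ref{thm:reg-adj} and Remark \ref{rmk:source-adj}), and invoking Galerkin orthogonality gives
$\langle e,\fy\rangle=A(e,g_\fy-\Pi g_\fy)\le c\|e'\|_{L^2\II}\,\|(g_\fy-\Pi g_\fy)'\|_{L^2\II}$;
the interpolation error of the adjoint solution contributes the factor $h^{\al-2+\beta}$ that, multiplied by $\|e'\|_{L^2\II}\le ch^{k+1}\|f\|_{H^1\II}$, produces the claimed $h^{\al+k-1+\beta}$. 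This duality step is the substance of the paper's proof and cannot be bypassed by the continuity of the fractional integral alone.
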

\begin{proof}
Recall that $\al\in(3/2,2)$, $2-\alpha\in(0,1/2)$, and thus the spaces $\Hd{2-\alpha}$ and $H^{2-\alpha}\II $
are equal, and further $\|\DDR 0 {2-\alpha}\cdot\|_{L^2\II}$ induces an equivalent norm on $H^{2-\alpha}\II$
\cite{LiXu:2009}. By a standard duality argument, we deduce
\begin{equation*}
\begin{split}
  \|\DDR 0 {2-\alpha}e\|_{L^2\II} &\le c\|e\|_{H^{2-\al}\II}
   = c\sup_{\fy\in H^{-2+\al}\II}\frac{\langle e,\fy \rangle}{\| \fy \|_{H^{-2+\al}\II}}\\
    &= c\sup_{\fy\in H^{-2+\al}\II}\frac{A(e,g_\fy )}{\| \fy \|_{H^{-2+\al}\II}},
\end{split}
\end{equation*}
where $g_\fy$ is the solution to the adjoint problem $\langle v,\phi \rangle=A(v,g_\fy)$, for
all $v\in V$. By Theorem \ref{thm:reg-adj}, $g_\fy \in H^{\al-1+\beta}\II$. Let $\Pi \fy\in V_h$
 be the standard Lagrange finite element interpolant of $\fy$. Then by
Galerkin orthogonality and the continuity of the bilinear form
\begin{equation*}
\begin{split}
  A(e,g_\fy )&= A(e,g_\fy-\Pi g_\fy )\le c\| e' \|_{L^2\II}  \| (g_\fy-\Pi g_\fy)'  \|_{L^2\II}\\
  &\le ch^{\al+k-1+\beta} \|f\|_{H^1\II} \| g_\fy \|_{H^{\al-1+\beta}\II}\\
  &\le ch^{\al+k-1+\beta} \|f\|_{H^1\II} \| \fy \|_{H^{-2+\al}\II}.
\end{split}
\end{equation*}
\end{proof}

Next we provide an $L^\infty\II$ estimate on the term $e=w-w_h$.
\begin{lemma}\label{thm:error-Linf}
Let the assumptions in Theorem \ref{thm:error-w} hold, and $w$ and $w_h$ be solutions of \eqref{eqn:var}
and \eqref{eqn:fem}, respectively. Then for $e=w-w_h$ and $\beta\in (2-\alpha,1/2)$, there holds
\begin{equation*}
   \|e\|_{L^\infty\II} \le c h^{\al+k-1+\beta} \|f\|_{H^1\II}.
\end{equation*}
\end{lemma}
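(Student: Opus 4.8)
The plan is to avoid the one-dimensional Sobolev embedding $\Hd1\hookrightarrow L^\infty\II$ altogether, since that route bounds $\|e\|_{L^\infty\II}$ by $\|e\|_{V}=\|e'\|_{L^2\II}$ and hence yields only the $H^1$-seminorm rate $h^{k+1}$, which is strictly worse than the asserted $h^{\al+k-1+\beta}$ because $\al+\beta>2$. Instead I would represent the pointwise error through the Green's function $G(x,\cdot)$ of the adjoint problem introduced at the end of Section \ref{secc:adjprob}. Since $V=\Hd1\hookrightarrow C(\bar D)$ in one dimension, the Dirac mass $\delta_x$ lies in $V^*$, and the defining relation $A(\fy,G(x,\cdot))=\fy(x)$ holds for every $\fy\in V$. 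Taking $\fy=e:=w-w_h\in V$ gives the exact representation $e(x)=A(e,G(x,\cdot))$.

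First I would invoke Galerkin orthogonality $A(e,v_h)=0$ for all $v_h\in V_h$ to subtract an arbitrary $v_h$, obtaining $e(x)=A(e,G(x,\cdot)-v_h)$. Using the boundedness of $A(\cdot,\cdot)$ on $V\times V$ (which follows from $a(\cdot,\cdot)=(\cdot',\cdot')$ together with the estimate \eqref{eqn:boundb}), this yields $|e(x)|\le c\|e'\|_{L^2\II}\inf_{v_h\in V_h}\|G(x,\cdot)-v_h\|_{V}$. The regularity $G(x,\cdot)\in H^{\al-1+\beta}\II\cap\Hd1$, with $\al-1+\beta\in(1,3/2)$, then permits the approximation property of Lemma \ref{fem-interp-U} with $\gamma=\al-1+\beta$; since $\gamma-1=\al-2+\beta<1/2\le k+1$, the minimum in \eqref{approx-Uh} is attained at $\gamma-1$, so $\inf_{v_h}\|G(x,\cdot)-v_h\|_{V}\le ch^{\al-2+\beta}\|G(x,\cdot)\|_{H^{\al-1+\beta}\II}$.

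The decisive quantitative input is the $H^1$-seminorm error bound already recorded in Theorem \ref{thm:error-w}: dividing its second summand by $h^{\al-2+\beta}$ gives $\|e'\|_{L^2\II}\le ch^{k+1}\|f\|_{H^1\II}$. Multiplying the two factors produces $|e(x)|\le c\,h^{k+1}\cdot h^{\al-2+\beta}\,\sup_{x\in D}\|G(x,\cdot)\|_{H^{\al-1+\beta}\II}\,\|f\|_{H^1\II}$, and since $(k+1)+(\al-2+\beta)=\al+k-1+\beta$, taking the supremum over $x\in D$ delivers exactly the claimed rate. This is the whole mechanism: the extra smoothing $h^{\al-2+\beta}$ coming from the adjoint Green's function is traded against the crude $H^1$ rate $h^{k+1}$, and the exponents add up precisely.

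The hard part will be making the uniform-in-$x$ control of $\|G(x,\cdot)\|_{H^{\al-1+\beta}\II}$ rigorous, since otherwise the final supremum is not finite. This requires that the adjoint regularity estimate of Theorem \ref{thm:reg-adj}, in the relaxed form of Remark \ref{rmk:source-adj} permitting a source in $H^{\al-3+\beta}\II$, applies to $F=\delta_x$ with a constant independent of $x$. Concretely I would verify $\delta_x\in H^{-1+\beta}\II\subset H^{\al-3+\beta}\II$ (using $\al\le2$), and that $\|\delta_x\|_{H^{-1+\beta}\II}$ is bounded uniformly in $x$, which is the standard one-dimensional fact dual to $H^{1-\beta}\II\hookrightarrow C(\bar D)$. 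Once this uniform bound and the point-source representation $e(x)=A(e,G(x,\cdot))$ are secured, the remaining steps are routine invocations of Lemma \ref{fem-interp-U}, the continuity of $A(\cdot,\cdot)$, and Theorem \ref{thm:error-w}.
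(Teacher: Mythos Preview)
Your argument is correct and follows essentially the same route as the paper: represent $e(x)$ via the adjoint Green's function, invoke Galerkin orthogonality, bound by $\|e'\|_{L^2\II}\inf_{v_h}\|G(x,\cdot)-v_h\|_V$, and combine the $H^1$-rate $h^{k+1}$ from Theorem~\ref{thm:error-w} with the approximation rate $h^{\al-2+\beta}$ afforded by $G(x,\cdot)\in H^{\al-1+\beta}\II$. Your explicit attention to the uniform-in-$x$ bound $\sup_x\|G(x,\cdot)\|_{H^{\al-1+\beta}\II}<\infty$ is a point the paper leaves implicit.
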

\begin{proof}
Using the weak formulation of $G(x,y)$
and Galerkin orthogonality, we have for any $\fy_h\in V_h$
\begin{equation*}
   e(x)= A(e, G(x,\cdot))=A(e, G(x,\cdot)-\fy_h).
\end{equation*}
Then by Theorem \ref{thm:error-w}, we obtain for any $\beta\in(2-\alpha,1/2)$
\begin{equation*}
|e(x)| \le c\|e\|_{H^1\II}\inf_{\fy_h\in V_h} \|G(x,\cdot)-\fy_h\|_{H^1\II}
  \le ch^{\alpha+k-1+\beta}\| f \|_{H^1\II},
\end{equation*}
where the last inequality follows from $G(x,\cdot)\in H^{\alpha-1+\beta}\II \subset H^1(D)$ and Lemma \ref{fem-interp-U}.
\end{proof}

The next result gives an estimate on the crucial term $|(\DDR 0 {2-\alpha}e)(1)|$.
\begin{lemma}\label{thm:error-II}
Let the assumptions in Theorem \ref{thm:error-w} hold, and $w$ and $w_h$ be solutions of
\eqref{eqn:var} and \eqref{eqn:fem}, respectively. Then for $e=w-w_h$, there holds with $\beta\in(2-\alpha,1/2)$
\begin{equation*}
   |(\DDR 0 {2-\alpha}e)(1)| \le ch^{\al+k-1+\beta} \|f\|_{H^1\II}.
\end{equation*}
\end{lemma}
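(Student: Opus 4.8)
Proof proposal for Lemma \ref{thm:error-II} ($|(\DDR 0 {2-\alpha}e)(1)| \le ch^{\al+k-1+\beta}\|f\|_{H^1\II}$):

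The plan is to bound the pointwise evaluation $(\DDR 0 {2-\alpha}e)(1) = ({}_0I_x^{\al-1}e')(1)$ via a duality argument against the adjoint problem, in the same spirit as the proofs of Lemmas \ref{thm:error-I} and \ref{thm:error-Linf}. First, I would use the identity $({}_0I_x^{\al-1}e)'=({}_0I_x^{\al-1}e')$ (valid for $e\in\Hd 1$, by \cite[Lemma 4.1]{JinLazarovPasciak:2013a}) to write
\begin{equation*}
  (\DDR 0 {2-\alpha}e)(1) = ({}_0I_x^{\al-1}e')(1) = \frac{1}{\Gamma(\al-1)}\int_0^1 (1-y)^{\al-2}e'(y)\,dy = (\omega_{\alpha-1},e')_{L^2\II}(1\cdot) ,
\end{equation*}
so that $(\DDR 0 {2-\alpha}e)(1) = \langle \Lambda, e\rangle$ for the linear functional $\Lambda$ with $\langle\Lambda,v\rangle := ({}_0I_x^{\al-1}v')(1)$. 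The key observation is that $\Lambda$ is a bounded functional on $H^{2-\alpha}\II$ rather than merely on $\Hd 1$: indeed $({}_0I_x^{\al-1}v')(1) = ({}_0I_x^1\,{}_0I_x^{\al-2}v')(1) = \int_0^1 ({}_0I_x^{\al-2}v')(y)\,dy$, and since $2-\alpha<1/2$ one can integrate the fractional integral by parts (or equivalently recognize $\Lambda$ as $(v',{}_xI_1^{\al-1}1)$ and move a derivative onto the smooth function ${}_xI_1^{\al-1}1 = (1-x)^{\al-1}/\Gamma(\alpha)$), obtaining $\langle\Lambda,v\rangle = c(v,(1-x)^{\al-2})_{L^2} + \text{(boundary term that vanishes)}$, i.e. $|\langle\Lambda,v\rangle|\le c\|v\|_{L^2\II}$ — in fact $\Lambda\in (H^{-2+\al}\II)^*$ or even $\Lambda\in L^2\II$.

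Granting that $\Lambda$ extends to a bounded functional on $L^2\II$ (or on $H^{2-\alpha}\II$), the result then follows directly from Lemma \ref{thm:error-I}: since $\|\DDR 0 {2-\alpha}\cdot\|_{L^2\II}$ is an equivalent norm on $H^{2-\alpha}\II$ and $e\in H^{2-\alpha}\II$, we get
\begin{equation*}
  |(\DDR 0 {2-\alpha}e)(1)| = |\langle\Lambda,e\rangle| \le c\|e\|_{H^{2-\alpha}\II} \le c\|\DDR 0 {2-\alpha}e\|_{L^2\II} \le ch^{\al+k-1+\beta}\|f\|_{H^1\II}.
\end{equation*}
Alternatively, and perhaps more robustly, I would run the duality argument explicitly: set $\langle v,\Lambda\rangle = A(v,g_\Lambda)$ for the adjoint solution $g_\Lambda\in V$, invoke Theorem \ref{thm:reg-adj} (with Remark \ref{rmk:source-adj}, since $\Lambda\in H^{-2+\al}\II\subset H^{\al-3+\beta}\II$) to get $g_\Lambda\in H^{\al-1+\beta}\II$, then use Galerkin orthogonality $|(\DDR 0 {2-\alpha}e)(1)| = |A(e,g_\Lambda-\Pi g_\Lambda)| \le c\|e'\|_{L^2\II}\|(g_\Lambda-\Pi g_\Lambda)'\|_{L^2\II}$ and combine the $\Hd1$-estimate for $e$ from Theorem \ref{thm:error-w} with the interpolation estimate of Lemma \ref{fem-interp-U} for $g_\Lambda$, landing again on $ch^{\al+k-1+\beta}\|f\|_{H^1\II}$.

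The main obstacle is the first step: rigorously justifying that the pointwise-at-$x=1$ functional $v\mapsto(\DDR 0 {2-\alpha}v)(1)$ is bounded on $H^{2-\alpha}\II$ (or at least lies in $H^{-2+\al}\II$), given that naively $\DDR 0 {2-\alpha}v$ is only in $L^2\II$ and has no well-defined trace. The resolution is that the trace at the \emph{right} endpoint $x=1$ is benign — the singularity of $\DDR 0 {2-\alpha}v$ is concentrated near $x=0$, while near $x=1$ the quantity ${}_0I_x^{\al-1}v'$ is as smooth as $v'$ — which is exactly why $\omega_{\alpha-1}(x)=(1-x)^{\al-2}/\Gamma(\al-1)\in L^2\II$ for $\alpha\in(3/2,2)$, a fact already used in deriving \eqref{eqn:boundb}. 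Making this precise via the representation $(\DDR 0 {2-\alpha}v)(1)=(v',\omega_{\alpha-1})$ and then observing $\omega_{\alpha-1}\in H^{-2+\alpha}\II$ (indeed $\omega_{\alpha-1}\in L^2\II\hookrightarrow H^{\alpha-2}\II$, and $\alpha-2>-2+\alpha$ fails, so one argues directly that $v\mapsto(v',\omega_{\alpha-1})$ defines an element of $(H^{2-\alpha}\II)^*$ using $\|v'\|_{H^{1-\alpha}}\lesssim\|v\|_{H^{2-\alpha}}$ paired against $\omega_{\alpha-1}\in H^{\alpha-1}$ is false too — so the cleanest route is the duality/Galerkin-orthogonality argument of the alternative above, which only needs $\Lambda\in H^{\al-3+\beta}\II$, and that holds since $\omega_{\alpha-1}\in L^2\II\subset H^{\al-3+\beta}\II$). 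I would therefore present the duality version as the primary proof.
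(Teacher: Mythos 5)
Your final (duality) argument is correct, but it is genuinely different from the paper's proof. The paper does not solve an adjoint problem with distributional data at all: it tests the Galerkin orthogonality relation $(e',\fy_h')+({\DDR 0 {2-\alpha}}e,q\fy_h)+({\DDR 0 {2-\alpha}}e)(1)(p,\fy_h)=0$ with the single fixed hat function $\fy_h=x\chi_{[0,1/2)}+(1-x)\chi_{(1/2,1]}$, for which $(p,\fy_h)\neq 0$, thereby isolating $({\DDR 0 {2-\alpha}}e)(1)$; the term $(e',\fy_h')=-2e(1/2)$ is then controlled by the $L^\infty\II$ estimate of Lemma \ref{thm:error-Linf} and the term $({\DDR 0 {2-\alpha}}e,q\fy_h)$ by Lemma \ref{thm:error-I}. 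Your route instead treats $\Lambda(v)=({\DDR 0 {2-\alpha}}v)(1)=(v',\omega_{\alpha-1})$ as adjoint data and repeats the Nitsche argument of Lemma \ref{thm:error-I}; it needs only the $\Hd1$ error bound from Theorem \ref{thm:error-w} plus the adjoint regularity of Remark \ref{rmk:source-adj}, and it avoids the paper's mild awkwardness of having to assume $(p,\fy_h)\neq 0$ for a particular test function (indeed it works uniformly in $p$, covering also the case $p\equiv 0$ that the paper relegates to Remark \ref{rem:error-mu2}). Conversely, your route leans on the unproved-in-detail Remark \ref{rmk:source-adj}, whereas the paper's proof only uses machinery it has fully established.

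Two points to tighten. First, your justification that $\Lambda\in H^{\alpha-3+\beta}\II$ via ``$\omega_{\alpha-1}\in L^2\II\subset H^{\alpha-3+\beta}\II$'' is not sufficient: the functional is $v\mapsto(v',\omega_{\alpha-1})$, and $\omega_{\alpha-1}\in L^2\II$ only gives $\Lambda\in H^{-1}\II$, which does not embed into $H^{\alpha-3+\beta}\II$ for $\beta>2-\alpha$. The correct reason, already recorded in the paper's discussion of \eqref{eqn:dualstrong}, is that $(1-x)^{\alpha-2}\in \widetilde H^{\alpha-2+\beta}\II$, so that after integrating by parts (the boundary terms vanish for $v\in\Hd1$ since $\alpha-2+1/2>0$) one gets $\Lambda=-\omega_{\alpha-1}'\in H^{\alpha-3+\beta}\II$. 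Second, your first ``primary'' route, asserting $|({\DDR 0 {2-\alpha}}e)(1)|\le c\|e\|_{H^{2-\alpha}\II}$, is false (point evaluation is not bounded on $H^{2-\alpha}\II$ for the image, and the paper's own Lemma \ref{lem:opS} needs $\|w\|_{H^{s+2-\alpha}\II}$ with $s>1/2$ for exactly this trace); you correctly discard it, but the write-up should simply omit it rather than present it as an option.
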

\begin{proof}
By the Galerkin orthogonality, we have
\begin{equation*}
  (e ',\fy_h') + (\ {\DDR 0 {2-\alpha}}e,q\fy_h) + ({\DDR 0 {2-\alpha}}e)(1)
     (p,\fy_h)=0 \quad \forall\fy_h\in V_h.
\end{equation*}
Note that $p(x)=\frac{\Gamma(\mu+1)}{\Gamma(1+\mu-\al)} x^{\mu-\al}-q(x)x^\mu$ is smooth for large
$\mu$. Without loss of generality, we may assume that $x=1/2$ is a grid point and let $\fy_h=x\chi_{[0,1/2)}
+(1-x)\chi_{(1/2,1]}\in V_h$ with $|(p,\fy_h)|:=c_1> 0$. Then  we obtain
\begin{equation*}
        c_1|({\DDR 0 {2-\alpha}}e)(1)| \le |(e',\fy_h')| + |(\ {\DDR 0 {2-\alpha}}e,q\fy_h)|=:I+II.
\end{equation*}
It suffices to bound the terms on the right hand side. The second term $II$ can be bounded using Lemma \ref{thm:error-I} as
\begin{equation*}
         II \le \| \ {\DDR 0 {2-\alpha}}e \|_{L^2\II}\| \fy_h \|_{L^2\II} \| q\|_{L^\infty\II}
           \le c\| \ {\DDR 0 {2-\alpha}}e \|_{L^2\II} \le ch^{\al+k-1+\beta} \|f\|_{H^1\II}.
\end{equation*}
and the first term $I$ can be bounded by Lemma \ref{thm:error-Linf} by
\begin{equation*}
        I\le  | \int_0^{1/2} e'(x) dx -\int_{1/2}^1 e'(x) dx | = 2|e(1/2)|
        \le ch^{\al+k-1+\beta} \| f \|_{H^{1}\II}.
\end{equation*}
This completes the proof of the lemma.
\end{proof}

Now by the triangle inequality, we arrive at the following $L^2\II$ estimate for the approximation $u_h$.
\begin{theorem}\label{thm:error-u}
Let Assumption \ref{ass:A} hold, $f,q\in H^1\II $.
Then there is an $h_0$ such that for all $h\le h_0$,
the solution $u_h$ satisfies that for any $\beta\in(2-\alpha,1/2)$
\begin{equation}\label{eqn:error-u}
   \|u-u_h\|_{L^2\II} \le ch^{\al+k-1+\beta} \| f \|_{H^{1}\II}.
\end{equation}
\end{theorem}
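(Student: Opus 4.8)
The plan is to bound $\|u-u_h\|_{L^2\II}$ by decomposing the error using the reconstruction formulas \eqref{eqn:reconstruct_u} and \eqref{eqn:dicrecon}. Writing $e = w - w_h$, we have
\begin{equation*}
  u - u_h = \DDR 0 {2-\alpha} e - \big((\DDR 0 {2-\alpha}w)(1) - (\DDR 0 {2-\alpha}w_h)(1)\big)x^\mu
          = \DDR 0 {2-\alpha} e - (\DDR 0 {2-\alpha}e)(1)\, x^\mu.
\end{equation*}
By the triangle inequality,
\begin{equation*}
  \|u-u_h\|_{L^2\II} \le \|\DDR 0 {2-\alpha}e\|_{L^2\II} + |(\DDR 0 {2-\alpha}e)(1)|\,\|x^\mu\|_{L^2\II}.
\end{equation*}
Since $\mu\ge\alpha>3/2$, the function $x^\mu$ lies in $L^2\II$ with a bounded norm, so the second term is controlled by $|(\DDR 0 {2-\alpha}e)(1)|$.

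The two remaining pieces are precisely the quantities estimated in the preceding lemmas. Lemma \ref{thm:error-I} gives $\|\DDR 0 {2-\alpha}e\|_{L^2\II} \le ch^{\al+k-1+\beta}\|f\|_{H^1\II}$, and Lemma \ref{thm:error-II} gives $|(\DDR 0 {2-\alpha}e)(1)| \le ch^{\al+k-1+\beta}\|f\|_{H^1\II}$, both valid for $\beta\in(2-\alpha,1/2)$ once $h\le h_0$ (the threshold coming from Theorem \ref{thm:disc-wellposed} and the discrete inf-sup condition \eqref{inf-sup-disc} used throughout those lemmas). Substituting these two bounds into the display above yields \eqref{eqn:error-u} directly.

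So the proof is essentially just assembling the three ingredients. There is no real obstacle at this stage — all the analytical work (the duality arguments, the adjoint regularity from Theorem \ref{thm:reg-adj}, the $L^\infty$ bound of Lemma \ref{thm:error-Linf} feeding into Lemma \ref{thm:error-II}, and the choice of the particular test function $\fy_h$ with $|(p,\fy_h)|>0$) has been carried out in the lemmas. The only point deserving a word of care is the constant $c_1 = |(p,\fy_h)|$ appearing in Lemma \ref{thm:error-II}: one must note it is bounded away from zero uniformly in $h$ (it does not depend on $h$ once $x=1/2$ is fixed as a grid point, which is assumed without loss of generality), so dividing by it does not destroy the rate. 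With that observed, the estimate \eqref{eqn:error-u} follows by combining the triangle inequality with Lemmas \ref{thm:error-I} and \ref{thm:error-II}.
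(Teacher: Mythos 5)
Your proposal is correct and is exactly the argument the paper intends: the theorem is stated as a direct consequence, ``by the triangle inequality,'' of Lemmas \ref{thm:error-I} and \ref{thm:error-II} applied to the decomposition $u-u_h=\DDR 0 {2-\alpha}e-(\DDR 0 {2-\alpha}e)(1)x^\mu$. Your added remark that the constant $c_1=|(p,\fy_h)|$ is independent of $h$ is a sensible point of care, but it does not change the route.
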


\begin{remark}\label{rem:error-mu2}
By Remark \ref{rem:reg-mu2}, we may choose $\mu=\al-1$, for which the error estimate follows similarly.
The only difference is the bound on $|(\DDR 0 {2-\alpha}e)(1)|$ in case of $q=0$. By the definition of
$(\DDR 0 {2-\alpha}e)(1)$, we have
\begin{equation*}
\begin{split}
|{\DDR 0 {2-\alpha}}e(1)|&= \frac{1}{\Gamma(\al-1)}\left|\int_0^1 (1-x)^{\al-2} e'(x)dx\right|
=\frac{1}{\Gamma(\al)}\left|\int_0^1 ((1-x)^{\al-1})' e'(x)\,dx\right|\\
&\le \frac{1}{\Gamma(\al)}\left|\int_0^1 ((1-x)^{\al-1}-(1-x))' e'(x)\,dx\right|+\frac{1}{\Gamma(\al)}\left|\int_0^1e'(x)\,dx\right|.\\
\end{split}
\end{equation*}
The second term vanishes due to $e(0)=e(1)=0$. Hence it suffices to establish an estimate
on first term. Since the transformed problem reproduces Poisson's equation, by the Galerkin
orthogonality $(e',\fy_h')=0$ and the fact that $\fy=(1-x)^{\al-1}-(1-x)\in \Hd 1\cap H^{\al-1+\beta}
\II$ with $\beta\in(2-\alpha,1/2)$ , we have by Lemma \ref{fem-interp-U}
\begin{equation*}
\begin{split}
\left|(\fy', e'(x))\right| \le c \| e' \|_{L^2\II} \inf_{\fy_h\in V_h} \| \fy'-\fy_h'  \|_{L^2\II}
&\le ch^{\al+k-1+\beta} \| f \|_{H^1\II}.
\end{split}
\end{equation*}
Thus the $L^2\II$ estimate \eqref{eqn:error-u} holds also for the choice $\mu=\al-1$.
\end{remark}

Next, we derive an optimal $L^2\II$ error estimate for all $\al\in(1,2)$ provided that $q=0$, $\mu=\al-1$ and $f$ is smooth enough.
\begin{theorem}\label{thm:L2-optimal}
Assume $q=0$ and $\mu=\al-1$. Then for all $\al\in(1,2)$ there  holds
\begin{equation*}
   \|u-u_h\|_{L^2\II} \le ch^{\al+k} \| f \|_{W^{1,\infty}\II}.
\end{equation*}
\end{theorem}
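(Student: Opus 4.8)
The plan is to exploit the drastic simplification of problem \eqref{eqn:fde2} when $q=0$ and $\mu=\al-1$. In this case the constant \eqref{c-zero} is $c_0=\Gamma(\al)/\Gamma(0)=0$ because $\Gamma$ has a pole at the origin, and since $q\equiv0$ all the lower-order terms of \eqref{eqn:fde2} disappear, so (as already observed in Remark \ref{rem:reg-mu2}) the transformed problem reduces to the Poisson equation $-w''=f$ in $D$ with $w(0)=w(1)=0$, and the Galerkin scheme \eqref{eqn:fem} reduces to the standard conforming finite element method for it. Hence $w_h$ is simply the $\Hd1$-Ritz projection of $w$, and the reconstruction formulas \eqref{eqn:reconstruct_u} and \eqref{eqn:dicrecon} give $u-u_h=\DDR 0 {2-\al}e-(\DDR 0 {2-\al}e)(1)\,x^{\al-1}$ with $e:=w-w_h$. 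Since $\|x^{\al-1}\|_{L^2\II}=(2\al-1)^{-1/2}<\infty$ for $\al\in(1,2)$, it suffices to prove the two estimates $\|\DDR 0 {2-\al}e\|_{L^2\II}\le ch^{\al+k}\|f\|_{W^{1,\infty}\II}$ and $|(\DDR 0 {2-\al}e)(1)|\le ch^{\al+k}\|f\|_{W^{1,\infty}\II}$, and then conclude by the triangle inequality.

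Next I would assemble the error estimates for $e$. Since $f\in W^{1,\infty}\II$, elliptic regularity gives $w\in W^{k+2,\infty}\II$ with $\|w\|_{W^{k+2,\infty}\II}\le c\|f\|_{W^{1,\infty}\II}$ for $k\in\{0,1\}$, and classical one-dimensional finite element theory yields $\|e\|_{H^1\II}\le ch^{k+1}\|f\|_{H^k\II}$, the Aubin--Nitsche bound $\|e\|_{L^2\II}\le ch^{k+2}\|f\|_{H^k\II}$, and the sharp pointwise estimates $\|e\|_{L^\infty\II}\le ch^{k+2}\|f\|_{W^{1,\infty}\II}$, $\|e'\|_{L^\infty\II}\le ch^{k+1}\|f\|_{W^{1,\infty}\II}$ (in one dimension these hold without a logarithmic factor since $w$ is smooth). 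The norms $\|f\|_{H^k\II}$ are of course dominated by $\|f\|_{W^{1,\infty}\II}$.

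For the first estimate, I would use that $e\in\Hd1$ with $e(0)=0$, so $e\in\Hdi 0 1$, and the interpolation inequality gives $\|e\|_{\Hdi 0 {2-\al}}\le c\|e\|_{L^2\II}^{\al-1}\|e\|_{H^1\II}^{2-\al}$. Combining with Theorem \ref{thm:fracop}(b) and the bounds above,
\begin{equation*}
\|\DDR 0 {2-\al}e\|_{L^2\II}\le c\|e\|_{\Hdi 0 {2-\al}}\le c\big(h^{k+2}\big)^{\al-1}\big(h^{k+1}\big)^{2-\al}\|f\|_{W^{1,\infty}\II}=ch^{\al+k}\|f\|_{W^{1,\infty}\II},
\end{equation*}
since $(k+2)(\al-1)+(k+1)(2-\al)=\al+k$. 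For the second estimate, I would use the identity $\DDR 0 {2-\al}e={_0I_x^{\al-1}}e'$, valid for $e\in\Hd1$, to write $(\DDR 0 {2-\al}e)(1)=\Gamma(\al-1)^{-1}\int_0^1(1-x)^{\al-2}e'(x)\,dx$, and split the integral at $x=1-h$. On $(1-h,1)$ the weight integrates to $\int_{1-h}^1(1-x)^{\al-2}\,dx=h^{\al-1}/(\al-1)$, so that piece is at most $c\|e'\|_{L^\infty\II}h^{\al-1}\le ch^{\al+k}\|f\|_{W^{1,\infty}\II}$. On $(0,1-h)$ one integration by parts (using $e(0)=0$) produces the boundary term $h^{\al-2}e(1-h)$, which is bounded by $h^{\al-2}\|e\|_{L^\infty\II}\le ch^{\al+k}\|f\|_{W^{1,\infty}\II}$, and the residual integral $(\al-2)\int_0^{1-h}(1-x)^{\al-3}e(x)\,dx$, whose modulus is at most $(2-\al)\|e\|_{L^\infty\II}\int_0^{1-h}(1-x)^{\al-3}\,dx\le\|e\|_{L^\infty\II}h^{\al-2}\le ch^{\al+k}\|f\|_{W^{1,\infty}\II}$. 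Summing these contributions gives the second estimate.

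The step I expect to be the crux is the endpoint integral near $x=1$: for $\al\le3/2$ the weight $(1-x)^{\al-2}$ fails to be square-integrable on $(1-h,1)$, so there is no room to replace $\|e'\|_{L^\infty\II}$ by $\|e'\|_{L^2\II}$ (nor $\|e\|_{L^\infty\II}$ by $\|e\|_{L^2\II}$ in the companion integral); one genuinely needs the optimal-order \emph{pointwise} estimates for $e$ and $e'$, which is exactly what makes the hypothesis $f\in W^{1,\infty}\II$ necessary (for the quadratic element $k=1$) and explains why one cannot simply relax the smoothness requirement of Theorem \ref{thm:error-u}. A minor point of care is that the fractional Sobolev space in the interpolation step must be the one-sided space $\Hdi 0 {2-\al}$ (extension by zero to the left), in order to invoke the mapping property of $\DDR 0 {2-\al}$ from Theorem \ref{thm:fracop}(b); this is harmless once $e(0)=0$ is used, and the interpolation inequality there is valid for every $\al\in(1,2)$, including the borderline value $\al=3/2$.
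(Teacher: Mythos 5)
Your proposal is correct and follows essentially the same route as the paper: reduce to the Poisson equation, invoke the classical optimal-order $W^{s,2}$ and $W^{s,\infty}$ estimates for the one-dimensional Ritz projection, bound $\|\DDR 0 {2-\al}e\|_{L^2\II}$ by interpolation between the $L^2\II$ and $H^1\II$ error bounds, and control $|(\DDR 0 {2-\al}e)(1)|$ by splitting the weighted integral at distance $h$ from $x=1$ and integrating by parts on the far piece. The only cosmetic difference is that the paper keeps the splitting parameter $\delta$ free and optimizes it to $\delta=h$ at the end, whereas you fix $\delta=h$ from the outset.
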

\begin{proof}
For $q=0$ and $\mu=\alpha-1$, the transformed problem is the standard  one-dimensional Poisson's equation
$$ -w''=f \quad \mbox{in } D,\quad\text{with}\quad w(0)=w(1)=0. $$
Then the solution $w_h$ of the discrete problem \eqref{eqn:fem} satisfies \cite{Wheeler:1973,DouglasDupont:1974}
$$ \| w-w_h \|_{W^{s,\infty}\II}+ \|w-w_h\|_{W^{s,2}\II} \le c h^{k+2-s} \| f \|_{W^{1,\infty}\II},\quad s=0,1.  $$
Now let $e=w-w_h$ and we have by interpolation
\begin{equation}\label{eqn:es-I}
 \|\DDR 0 {2-\alpha}e\|_{L^2\II} \le \| e \|_{H^{2-\al}\II}\le ch^{\al+k} \| f \|_{W^{1,\infty}\II}.
\end{equation}
Hence it suffices to bound $|(\DDR 0 {2-\alpha}e)(1)|$. Since $e\in \Hd1$, we have for $\delta\in(0,1)$
\begin{equation*}
\begin{split}
|(\DDR 0 {2-\alpha}e)(1)| &= \frac{1}{\Gamma(\al-1)}\bigg| \int_0^1 (1-s)^{\al-2} e'(s)\bigg|\\
&\le c\left(\bigg| \int_0^{1-\delta} (1-s)^{\al-2} e'(s)\,ds\bigg|+\bigg| \int_{1-\delta}^1 (1-s)^{\al-2} e'(s)\,ds\bigg|\right).
\end{split}
\end{equation*}
Then the second term can be easily bounded by
\begin{equation*}
 \bigg| \int_{1-\delta}^1 (1-s)^{\al-2} e'(s)\,ds \bigg| \le \int_{1-\delta}^1 (1-s)^{\al-2} \,ds \|  e' \|_{L^\infty\II}
\le c\delta^{\al-1} h^{k+1}  \| f \|_{W^{1,\infty}\II},
\end{equation*}
while the first term can be bounded using integration by parts
\begin{equation*}
    \begin{split}
      \bigg| \int_0^{1-\delta} (1-s)^{\al-2} e'(s)\,ds\bigg|
       &\le c\left(\bigg| (1-s)^{\al-2}e(s)\big|_0^{1-\delta}\bigg|+\bigg|\int_0^{1-\delta} \frac{(1-s)^{\al-1}}{\al-1} e(s)\,ds\bigg| \right)\\
        &\le c(\delta^{\al-2}+1-\delta^\al)h^{k+2} \| f \|_{W^{1,\infty}\II}.
    \end{split}
\end{equation*}
Now choosing $\delta=h$ yields the following estimate
\begin{equation*}
|(\DDR 0 {2-\alpha}e)(1)| \le ch^{\al+k}.
\end{equation*}
This together with \eqref{eqn:es-I} gives an optimal $L^2\II$-error estimate
\begin{equation*}
 \| u-u_h  \|_{L^2\II}\le \| \DDR 0 {2-\alpha}e  \|_{L^2\II}+ c|(\DDR 0 {2-\alpha}e)(1)|\le ch^{\al+k}  \| f \|_{W^{1,\infty}\II}.
\end{equation*}
\end{proof}

\section{Eigenvalue problem}\label{sec:eig}
Now we apply the new approach to the following fractional Sturm-Liouville
problem (FSLP): find $u$ and $\lambda\in \mathbb{C}$ such that
\begin{equation}\label{eqn:eig}
  \begin{aligned}
   -\DDR 0 \alpha u + qu &= \lambda u\quad \mbox{ in } D,\\
     u(0)=u(1) &= 0.
  \end{aligned}
\end{equation}
The eigenvalue problem is important in studying the dynamics of superdiffusion processes.
However, the accurate computation of the eigenvalues and eigenfunctions is challenging,
due to the presence of a singularity in the eigenfunction. In \cite{JinLazarovPasciak:2013a},
a finite element method with piecewise linear finite elements was developed for the problem.
Numerically, a second-order convergence of the eigenvalue approximations is observed, but
the theoretical convergence rate of eigenfunction approximations is of order $O(h^{\al-1})$ in the $L^2\II$ norm which is
very slow. In this part, we develop an efficient method for problem \eqref{eqn:eig} by
extending the new approach in Sections \ref{sec:soltheory} and \ref{sec:fem}.

Proceeding like in section \ref{sec:soltheory}, we deduce that the weak formulation of the
Sturm-Liouville problem reads: find $w\in V$ and $\la\in \mathbb{C}$ such that
\begin{equation}\label{eqn:FSLP_new}
    A(w,\fy)=\la (\DDR 0{2-\al}w-(\DDR 0{ 2- \al}w)(1)x^\mu,\fy) \quad \forall \fy \in V.
\end{equation}
Then we define $u$ by
\begin{equation*}
   u = {\DDR 0{2-\al}} w-(\DDR 0{2-\alpha}w)(1)x^\mu.
\end{equation*}
Then $\lambda$ is the eigenvalue and $u$ is the corresponding eigenfunction.
Accordingly, the discrete problem is given by: find $w_h\in V_h$ and $\la_h\in \mathbb{C}$ such that
\begin{equation}\label{eqn:FSLP_new_disc}
\begin{split}
    A(w_h,\fy)&=\la_h (\DDR 0{\al-2}w_h-(\DDR 0{\al-2}w_h)(1)x^\mu,\fy) \quad \forall \fy \in V_h,\\
    u_h&={\DDR 0{\al-2}w_h}-(\DDR 0{\al-2}w_h)(1)x^\mu.
\end{split}
\end{equation}
and $\{\la_h,w_h\}$ is an approximated eigenpair of the transformed FSLP \eqref{eqn:FSLP_new}.

We shall follow the notation and use some fundamental results from
\cite{Osborn:1975,Babuska-Osborn}. To this end, we introduce the operator $T: L^2\II\to \Hd 1$ defined by
\begin{equation}\label{T-operator}
T f \in \Hd 1, \quad A(Tf,\fy) = (f,\fy) \quad \forall \fy \in V.
\end{equation}
Obviously, $T$ is the solution operator of the source problem \eqref{eqn:fde2}. By Theorem
\ref{thm:reg-new}, the solution operator $T$ satisfies the following smoothing property:
\begin{equation*}
\| Tf \|_{H^2\II} \le c\|f\|_{L^2\II}.
\end{equation*}
Since $H^2\II$ is compactly embedded into $H^1\II$ \cite{AdamsFournier:2003}, we deduce
that the operator $T:~ L^2\II\to \Hd 1$ is compact. Next we define an operator $S:
\Hd 1 \to L^2\II$ by
\begin{equation}\label{eqn:Op-S}
    Sw={\DDR 0{2-\al}w}-(\DDR 0{2-\al}w)(1)x^\mu.
\end{equation}
\begin{lemma}\label{lem:opS}
The operator $S: \Hd 1\to L^2\II $ defined in \eqref{eqn:Op-S} is compact.
\end{lemma}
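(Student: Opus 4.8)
The plan is to decompose $S$ into two pieces and show each is compact from $\Hd 1$ to $L^2\II$. Write $Sw = S_1 w - S_2 w$ where $S_1 w = \DDR 0 {2-\al} w$ and $S_2 w = (\DDR 0 {2-\al}w)(1)\, x^\mu$. For $S_1$, I would invoke Theorem \ref{thm:fracop}(b)--(c): for $w\in\Hd 1\subset\Hdi 0 1$ one has $\DDR 0 {2-\al}w\in\Hdi 0 {\al-1}$ with $\|\DDR 0 {2-\al}w\|_{\Hdi 0{\al-1}}\le c\|w\|_{\Hd 1}$ (indeed $\DDR 0{2-\al}={_0I_x^{\al-1}}\tfrac{d}{dx}$ maps $\Hd 1$ boundedly into $\Hdi 0{\al-1}$ using $({_0I_x^{\al-1}}\psi)'=({_0I_x^{\al-1}}\psi')$). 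Since $\al\in(3/2,2)$ gives $\al-1>1/2$, the embedding $\Hdi 0{\al-1}\hookrightarrow L^2\II$ is compact by Rellich--Kondrachov, so $S_1$ is compact.

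For $S_2$, the key observation is that $S_2$ is a rank-one operator: its range is spanned by the single fixed function $x^\mu\in L^2\II$ (here $\mu\ge\al$, so $x^\mu\in C[0,1]\subset L^2\II$). It therefore suffices to check that the linear functional $w\mapsto(\DDR 0{2-\al}w)(1)$ is bounded on $\Hd 1$; a bounded finite-rank operator is automatically compact. Boundedness of this functional was already established in the derivation of \eqref{eqn:boundb}: using $(\DDR 0{2-\al}w)(1)=({_0I_x^{\al-1}}w')(1)$ and Cauchy--Schwarz with the weight $\omega_{\alpha-1}(x)=(1-x)^{\alpha-2}/\Gamma(\alpha-1)$, which lies in $L^2\II$ precisely because $\al>3/2$, we get $|(\DDR 0{2-\al}w)(1)|\le c\|\omega_{\alpha-1}\|_{L^2\II}\|w'\|_{L^2\II}\le c\|w\|_{\Hd 1}$. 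Hence $\|S_2 w\|_{L^2\II}\le c\|x^\mu\|_{L^2\II}\|w\|_{\Hd 1}$ and $S_2$ has rank one, so it is compact.

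Finally, $S=S_1-S_2$ is a difference of two compact operators and is therefore compact, which proves the lemma. The only mildly delicate point is the mapping property $\DDR 0{2-\al}:\Hd 1\to\Hdi 0{\al-1}$ used for $S_1$; everything else (Rellich compactness, the rank-one argument for $S_2$, the bound on the point functional) is either standard or already in hand from the estimates preceding \eqref{eqn:boundb}. I expect no serious obstacle: the structure of the proof is essentially bookkeeping on top of Theorem \ref{thm:fracop} and the Sobolev embedding.
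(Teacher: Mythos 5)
Your proof is correct, but it takes a somewhat different route from the paper's. The paper does not decompose $S$: it bounds both terms of $Sw$ directly from a \emph{weaker} space, showing $\|\DDR 0 {2-\alpha} w\|_{L^2\II}\le c\|w\|_{H^{2-\alpha}\II}$ and, via the chain $|(\DDR 0 {2-\al} w)(1)|\le c\|\DDR 0 {2-\alpha} w\|_{H^s\II}\le c\|\DDR 0 {s+2-\alpha} w\|_{L^2\II}\le c\|w\|_{H^{s+2-\alpha}\II}$ for $1/2<s<\al-1$, that $S$ is bounded from $\Hd{s+2-\alpha}$ into $L^2\II$; compactness then comes from the compact embedding $\Hd 1\hookrightarrow \Hd{s+2-\alpha}$ placed \emph{in front} of $S$. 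You instead split $S=S_1-S_2$, get compactness of $S_1$ from the regularity pickup $\DDR 0{2-\al}={_0I_x^{\al-1}}\frac{d}{dx}:\Hd 1\to \Hdi 0{\al-1}$ followed by the compact embedding into $L^2\II$, and dispatch $S_2$ as a bounded rank-one operator. Your treatment of the boundary term is arguably cleaner: the finite-rank argument only needs boundedness of the functional $w\mapsto(\DDR 0{2-\al}w)(1)$ on $\Hd 1$, which is already available from the derivation of \eqref{eqn:boundb}, whereas the paper must bound this functional on the weaker space $\Hd{s+2-\alpha}$, which forces the Sobolev-embedding detour through $H^s\II$ with $s>1/2$ and the norm equivalence $\|\DDR 0 s\cdot\|_{L^2\II}\sim\|\cdot\|_{\Hd s}$. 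The paper's approach has the mild advantage of yielding the sharper statement that $S$ is in fact bounded on $\Hd{s+2-\alpha}$ for any $s\in(1/2,\al-1)$, which is more than compactness on $\Hd 1$. (One cosmetic remark: for the compactness of $\Hdi 0{\al-1}\hookrightarrow L^2\II$ you only need $\al-1>0$, not $\al-1>1/2$.)
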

\begin{proof}
We observe that for $w\in \Hd 1$
\begin{equation*}
  \|Sw\|_{L^2\II} \leq \|\DDR 0 {2-\alpha} w\|_{L^2\II} + |(\DDR 0 {2-\al} w)(1)|\|x^\mu\|_{L^2\II}.
\end{equation*}
By Theorem \ref{thm:fracop}, we have
\begin{equation*}
  \|\DDR 0 {2-\alpha} w\|_{L^2\II}\leq c\|w\|_{H^{2-\alpha}\II}.
\end{equation*}
Meanwhile, by Sobolev embedding theorem \cite{AdamsFournier:2003} and norm equivalence on the
space $\Hd s$ \cite{JinLazarovPasciak:2013a}, there holds for $\alpha-1>s>1/2$, i.e., $1/2<s+2-\alpha<1$,
\begin{equation*}
  \begin{aligned}
    |(\DDR 0 {2-\al} w)(1)| & \leq c\|\DDR 0 {2-\alpha} w\|_{H^s\II}\leq c\|\DDR 0 s (\DDR 0 {2-\alpha} w)\|_{L^2\II}\\
     & =c\|\DDR 0 {s+2-\alpha} w\|_{L^2\II}\leq  c\|w\|_{H^{s+2-\alpha}\II}.
  \end{aligned}
\end{equation*}
These two estimates implies that the operator is bounded from $\Hd {s+2-\alpha}$ to $L^2\II$,
which together the compactness of the embedding from $\Hd 1$ into $\Hd {s+2-\alpha}$ yields the desired
compactness.
\end{proof}

Then the FSLP \eqref{eqn:FSLP_new} can be rewritten as to find
$ w \in V, \ \ \mbox{such that} \, \, A(w, \fy) = \lambda (Sw, \fy)$
$ \forall \fy \in V $ or equivalently $ TSw=\lambda^{-1} w$.
Now after applying the operator $S$ to this equality and noting that $Sw=u \in L^2\II$ we
get the problem in oprator form:
find $(\la, u)\in \mathbb{C}
\times L^2\II$ such that
\begin{equation*}
 \la^{-1} u = ST u,
\end{equation*}
i.e., $(\la^{-1}, u)$ is an eigenpair of the operator $ST$. By Lemma \ref{lem:opS}, the operator $S:\Hd 1\to
L^2\II$ is bounded and compact, and thus $ST:~ L^2\II\to L^2\II$ is a compact operator. With the help of this
correspondence, the properties of the eigenvalue problem \eqref{eqn:eig} can be derived from the
spectral theory for compact operators \cite{Yoshida:1980,DunfordSchwartz:1988}. Let $\sigma(ST)
\subset \mathbb{C}$ be the set of all eigenvalues of $ST$ (or its spectrum), which is known to be a countable
set with no nonzero limit points. By Assumption \ref{ass:A} on the bilinear form $a(u; v)$, zero
is not an eigenvalue of $ST$. Furthermore, for any $\mu\in\sigma(ST)$, the space $N(\mu I-ST)$,
where $N$ denotes the null space, of eigenvectors corresponding to $\mu$ is finite dimensional.

Now let $T_h: ~V_h \to V_h$ be a family of operators for $0<h<1$ defined by
\begin{equation}\label{Th-operator}
  T_h f \in V_h, \quad A(T_hf,\fy) = (f,\fy)\quad \forall\fy \in V_h. 
\end{equation}
Then the discrete FSLP \eqref{eqn:FSLP_new_disc} can be written as: to find
$ w_h \in V_h, \ \ \mbox{such that} \, \, A(w_h, \fy) = \lambda_h (Sw_h, \fy)$
$ \forall \fy \in V $ or equivalently $ T_hSw_h=\lambda_h^{-1} w_h$, with $u_h=Sw_h$.
Hence the discrete problem in operator form reads: to find
$(\la_h, u_h)\in \mathbb{C}\times L^2\II$ such that
$$ \la_h^{-1} u = ST_h u.$$
By Theorem \ref{thm:error-u}, the operator $ST_h$ converges to $ST$ in $L^2\II$.
Further, the operator sequence $\{ST_h\}_{h>0}$ is collectively compact on $L^2\II$, i.e.,
the set $\{ST_hf: \|f\|_{L^2\II}\leq 1\}$ is compact in $L^2\II$. To see this, we note that
by the  discrete inf-sup condition, $\|T_hf\|_{H^1\II}\leq c$, cf. Theorem \ref{thm:disc-wellposed},
and thus the set $\{T_hf:
\|f\|_{L^2\II}\leq 1\}$ is uniformly bounded in $\Hd 1$, and the claim follows from the
compactness of the operator $S:\Hd 1\to L^2\II$ from Lemma \ref{lem:opS}. Hence, we can apply
the approximation theory \cite{Osborn:1975} of compact operators.
Specifically, let $\mu=\lambda^{-1}\in \sigma(ST)$ be an eigenvalue of $ST$ with algebraic multiplicity
$m$. Then $m$ eigenvalues of $ST_h$, $\mu_h^j$, $j=1,2,\ldots,m$,
of $ST_h$ will converge to $\mu$, where the eigenvalues $\mu_h^j$ are counted according to the
algebraic multiplicity of $\mu_h^j$ as eigenvalues of $ST_h$.

Now we state the main result for the spectral approximation.
It follows directly from \cite[Theorems 5 and 6]{Osborn:1975} and Theorem \ref{thm:error-u}.
\begin{theorem}\label{thm:eig-approx}
Let Assumption \ref{ass:A} hold and $q\in H^1\II$. For $\la^{-1} \in \sigma(ST)$, let $\delta$ be its
ascent, i.e., the smallest integer $m$ such that $N((\la^{-1}-ST)^m) =
N((\lambda^{-1} -ST)^{m+1})$.
\begin{itemize}
  \item[(i)] For any $\gamma<\alpha+k-1/2$, there holds
   \begin{equation*}
     |\lambda - \lambda_h^j| \le C h^{\gamma/\delta}.
   \end{equation*}
   \item[(ii)]
   Let $\la_h^{-1}$ be an eigenvalue of $ST_h$ such that $\lim_{h\to 0} \la_h=\la$ with $\la \in \sigma(ST)$.
   Suppose for each $h$, $u_h$ is a unit vector satisfying $((\lambda_h^j)^{-1} - ST_h)^k u_h=0$ for some
   positive integer $k\le \delta$. Then, for any integer $l$ with $k\le l \le \al$, there is a vector $u$
   such that $(\la^{-1}-ST)^lu=0$ and for any $\gamma<\alpha+k-1/2$,
   \begin{equation*}
     \| u-u_h  \|_{L^2\II} \le Ch^{\gamma/\delta}.
   \end{equation*}
\end{itemize}
\end{theorem}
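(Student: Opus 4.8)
The plan is to obtain both assertions from the abstract spectral approximation theory of Osborn \cite{Osborn:1975} for the compact operator $ST\colon L^2\II\to L^2\II$ together with the collectively compact, pointwise convergent family $\{ST_h\}$ (both facts are established in the discussion preceding the theorem); the only genuinely new ingredient is a quantitative rate of convergence of $ST_h$ on the relevant generalized eigenspaces. Write $M=N\bigl((\la^{-1}I-ST)^\delta\bigr)$ for the generalized eigenspace, let $M^\ast$ be the corresponding generalized eigenspace of the $L^2\II$-adjoint $(ST)^\ast$, and fix bases $\{\phi_i\}$ of $M$ and $\{\phi_i^\ast\}$ of $M^\ast$. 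By \cite[Theorems 5 and 6]{Osborn:1975} it suffices to estimate $\eta_h:=\|(ST-ST_h)|_M\|_{\mathcal L(L^2\II)}$, $\eta_h^\ast:=\|(ST-ST_h)^\ast|_{M^\ast}\|_{\mathcal L(L^2\II)}$ and $\max_i|((ST-ST_h)\phi_i,\phi_i^\ast)|$, and to substitute the resulting bounds into Osborn's eigenvalue and generalized-eigenvector estimates.

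For $\eta_h$: since $ST$ is the solution operator of \eqref{eqn:fde}, its range lies in $H^1\II$ (for $g\in L^2\II$, $Tg\in\Hd1\cap H^2\II$ by Theorem \ref{thm:reg-new}, and then $STg=\DDR 0{2-\al}(Tg)-(\DDR 0{2-\al}Tg)(1)x^\mu\in H^1\II$ by the mapping properties of Theorem \ref{thm:fracop} and $x^\mu\in H^1\II$). Expanding $(\la^{-1}I-ST)^\delta v=0$ exhibits every $v\in M$ as an element of $\operatorname{range}(ST)$, so $M\subset H^1\II$, and, $M$ being finite dimensional, $\|v\|_{H^1\II}\le c_M\|v\|_{L^2\II}$ for $v\in M$. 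For such $v$, $STv$ is the exact solution of \eqref{eqn:fde} with source $v\in H^1\II$ and $ST_hv$ the reconstructed Galerkin approximation \eqref{eqn:dicrecon}; since $q\in H^1\II$, Theorem \ref{thm:error-u} gives, for any $\beta\in(2-\al,1/2)$, $\|(ST-ST_h)v\|_{L^2\II}\le c\,h^{\al+k-1+\beta}\|v\|_{L^2\II}$. Letting $\beta\uparrow1/2$ yields $\eta_h\le c\,h^\gamma$ for every $\gamma<\al+k-1/2$, and the same bound controls the bilinear term through $|((ST-ST_h)\phi_i,\phi_i^\ast)|\le\|(ST-ST_h)\phi_i\|_{L^2\II}\|\phi_i^\ast\|_{L^2\II}$.

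For $\eta_h^\ast$: the $L^2\II$-adjoint $(ST)^\ast$ is the solution operator of the right-sided problem $-\DDR 1\al v+qv=\cdot$ with zero boundary data, so $M^\ast$ consists of its generalized eigenfunctions; by the right-sided analogues of \eqref{eqn:sols} and Theorem \ref{thm:fracop}, together with $(1-x)^{\al-1}\in H^1\II$ for $\al\in(3/2,2)$, one gets $M^\ast\subset H^1\II$. For $v\in M^\ast$ and $g\in L^2\II$, $((ST-ST_h)g,v)=(S(Tg-T_hg),v)$; I would introduce $\psi\in V$ solving the adjoint transformed problem $A(\fy,\psi)=(S\fy,v)$ for all $\fy\in V$ (well posed by Assumption \ref{ass:A}$(\mathrm{a}^\ast)$ and \eqref{inf-suprl}, and, since $v\in H^1\II$, of class $H^{\al-1+\beta}\II$ by the argument of Theorem \ref{thm:reg-adj} and Remark \ref{rmk:source-adj}), and then use the Galerkin orthogonality $A(Tg-T_hg,\psi_h)=0$, C\'ea's lemma, Theorem \ref{thm:reg-new} and Lemma \ref{fem-interp-U} to get $|((ST-ST_h)g,v)|\le c\|Tg-T_hg\|_V\inf_{\psi_h\in V_h}\|\psi-\psi_h\|_V\le c\,h^{\al-1+\beta}\|g\|_{L^2\II}$. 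Hence $\eta_h^\ast\le c\,h^{\al-1+\beta}\to0$. Feeding $\eta_h$, $\eta_h^\ast$ and the bilinear bounds into \cite[Theorems 5 and 6]{Osborn:1975} gives $|\la-\la_h^j|\le c\,(h^\gamma+\eta_h\eta_h^\ast)^{1/\delta}\le c\,h^{\gamma/\delta}$ (using $\eta_h\eta_h^\ast\le c\,h^\gamma$), which is (i), while the accompanying generalized-eigenvector estimate produces $u$ with $(\la^{-1}I-ST)^lu=0$ and $\|u-u_h\|_{L^2\II}\le c\,\eta_h^{1/\delta}\le c\,h^{\gamma/\delta}$, which is (ii).

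The step I expect to demand real work, rather than bookkeeping, is the adjoint estimate $\eta_h^\ast\le c\,h^{\al-1+\beta}$: one must identify $M^\ast$ and its regularity through the right-sided fractional calculus, set up the adjoint of the \emph{transformed} variational problem with the unusual source functional $\fy\mapsto(S\fy,v)$ and verify its well-posedness and regularity pickup (the analogue of Theorem \ref{thm:reg-adj}), and repeat the duality/Nitsche argument of Theorems \ref{thm:error-w}--\ref{thm:error-u} with the roles of $A(\cdot,\cdot)$ and its transpose interchanged. Everything else reduces to the already-proven Theorem \ref{thm:error-u}, the regularity theorems of Section \ref{sec:soltheory}, and a direct substitution into Osborn's abstract estimates.
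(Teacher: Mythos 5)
Your proposal is correct and follows essentially the same route as the paper, whose entire proof is the one-line remark that the result ``follows directly from [Osborn, Theorems 5 and 6] and Theorem \ref{thm:error-u}'', relying on the operator framework ($T$, $S$, $ST$, $ST_h$, collective compactness) set up in the preceding paragraphs. You supply considerably more detail than the paper does — in particular the adjoint estimate $\eta_h^\ast\le ch^{\alpha-1+\beta}$ is more than is actually needed, since the uniform boundedness of $\|ST-ST_h\|_{\mathcal L(L^2\II)}$ (from the discrete stability of Theorem \ref{thm:disc-wellposed}) already makes the product term $\eta_h\eta_h^\ast\le C\eta_h\le Ch^\gamma$ in Osborn's eigenvalue bound, and Osborn's eigenvector bound uses only $\eta_h$.
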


\begin{remark}
It is known that in case of $q=0$, all eigenvalues to \eqref{eqn:eig} are simple  \cite[Section 4.4]{Sedletskii:2000},
i.e., $\delta=1$ in Theorem \ref{thm:eig-approx}. Numerically we observe that the
eigenvalues to \eqref{eqn:eig} are always simple. When using piecewise linear finite
elements, the convergence rate of the new approach in Theorem
\ref{thm:eig-approx} is better than that for the standard Galerkin method, which
has a convergence rate $Ch^{\gamma/\delta}$, for any $\gamma<\alpha-1$
\cite[Theorem 6.1]{JinLazarovPasciak:2013a}. This shows the advantage of the new approach.
\end{remark}

\section{Numerical results and discussions}\label{sec:numer}
In this section, we present numerical results to illustrate the efficiency and accuracy of the
new approach and to verify our theoretical findings. We shall discuss the source problem and
the Sturm-Liouville problem separately.
\subsection{Source problem}
For the source problem \eqref{eqn:fde}, we consider the following three different right hand sides:
\begin{itemize}
  \item[(a)] The source term $f(x)=x(1-x)$ belongs to $\Hd {1+\beta}$ for any $\beta\in [0,1/2)$.
  \item[(b)] The source term (b1) $f(x)=1$ and  (b2) $ f(x)=(1-x)^{\frac35}$ belong to
   the space $H^1\II\cap \Hd{\beta}$ for any $\beta\in [0,1/2)$.
  \item[(c)] The source term $f(x)=\chi_{[0,1/2]}$ belongs to $\Hd{\beta}$ for any $\beta\in [0,1/2)$.
\end{itemize}
The computations were performed on a uniform mesh with a mesh size $h=1/2^m$, $m\in \mathbb{N}$.
We note that if the potential $q$ is zero, the exact solution $u$ can be computed explicitly.
For the case $q\neq 0$, the exact solutions are not available in closed form, and hence we
compute the reference solution on a very refined mesh with a mesh size $h= 1/2^{12}$. For each example,
we consider three different $\alpha$ values, i.e., $1.55$, $1.75$ and $1.95$, and present the
$L^2\II$-norm of the error $e=u-u_h$.

\subsubsection{Numerical results for example (a)}
For this very smooth source, we consider the simple case $q=0$. The exact solution $u(x)$ is given by
$u(x) =  \tfrac{1}{\Gamma(\alpha+2)}(x^{\alpha-1}-x^{\alpha+1})-\tfrac{2}{\Gamma(\alpha+3)}(x^{\alpha-1}
-x^{\alpha+2})$, and it belongs to $\Hdi0{\alpha-1+\beta}$ with $\beta\in(2-\alpha,1/2)$ due to the
presence of the term $x^{\al-1}$, despite the smoothness of the right hand side $f$. Thus the standard
Galerkin FEM converges slowly; see \cite[Table 1]{JinLazarovPasciak:2013a}. Numerical results for the
new approach are presented in Table \ref{tab:source-a-q0}. In the table, $P1$ and $P2$ denote piecewise
linear and piecewise quadratic FEMs, respectively. \texttt{rate} refers to the empirical convergence rate,
and the numbers in the bracket denote theoretical rates. The numerical results show $O(h^{\al})$ and
$O(h^{\al+1})$ convergence for $P1$ and $P2$ FEMs, respectively. Hence, the $L^2\II$-error estimate in Theorem \ref{thm:error-u} is
suboptimal: the empirical ones are one half order higher than the theoretical one. The suboptimality is
attributed to the low regularity of the adjoint problem \eqref{eqn:dual}, used in Nitsche's trick.
Although not presented, we note that with the choice $\mu=\al-1$, the optimal convergence rate in
Theorem \ref{thm:L2-optimal} can be fully confirmed.

\begin{table}[hbt!]
\caption{The $L^2\II$-norm of the error for example (a) with $q=0$, $\mu=4$, $\al=1.55, 1.75, 1.95$, $h=1/2^m$.}
\label{tab:source-a-q0}
\vspace{-.3cm}
\begin{center}
     \begin{tabular}{|c|c|cccccc|c|}
     \hline
     $\al$  & $m$ &$3$ &$4$ &$5$ & $6$ & $7$ & $8$  &rate \\
     \hline
     $1.55$ & $P1$ &2.62e-3 &9.28e-4 &3.20e-4 &1.09e-4 &3.68e-5 &1.22e-5 & 1.55 (1.05) \\

     & $P2$   &2.30e-5 &3.96e-6 &6.79e-7 &1.16e-7 &1.98e-8 & 3.39e-9 & 2.55 (2.05)\\
    \hline
     $1.75$ & $P1$    &7.89e-4 &2.26e-4 &6.47e-5 &1.86e-5 &5.34e-6 & 1.53e-6 & 1.80 (1.25)\\

     & $P2$    & 1.11e-5 &1.69e-6 &2.54e-7 &3.80e-8 &5.66e-9 &8.39e-10 & 2.74 (2.25)\\
     \hline
     $1.95$ & $P1$     &3.06e-4 & 7.74e-5	&1.95e-5 &4.93e-6 &1.24e-6 &3.11e-7& 1.98 (1.45)\\

     & $P2$   &5.38e-6	&7.03e-7 &9.15e-8 &1.18e-8 &1.53e-9 &1.98e-10 & 2.95 (2.45)\\
     \hline
     \end{tabular}
\end{center}
\end{table}

\subsubsection{Numerical results for example (b)}
In Table \ref{tab:source-b-q}, we present numerical results for example (b1) with $q(x)=x$. Since both the
source term $f$ and the potential $q$ belong to $H^1\II$, by Theorem \ref{thm:reg-new}, $w$ belongs to
$H^3\II\cap \Hd 1$, and the $L^2\II$-error achieves a rate $O(h^{\al+k-1/2})$ for $k=0,1$. The empirical
$L^2\II$ rate is one half order higher than the theoretical one. Next we compare the new approach with the
singularity enhanced FEM developed in \cite{JinZhou:2014}. Since the regular part $u_r$ (i.e., the part
of the solution $u$ apart from the leading singularity $x^{\alpha-1}$) only
belongs to $H^{\al+\beta}\II$ due to $f,q\in \Hd{\beta}$, even with the $P2$ FEM, the approach in
\cite{JinZhou:2014} can only achieve a convergence rate slower than that in Theorem \ref{thm:error-u},
and the new approach requires less regularity on the potential $q$ and source term $f$.
In Table \ref{tab:source-small_al}, we show numerical results for $\al<1.5$, which is not covered by our
theory. Interestingly, the numerical results indicate that
our scheme converges equally well with the order $O(h^{\al+k})$ in this case.

Further numerical results for different $\mu$ values are presented in Table \ref{tab:source-b-q-mu}.
By Remarks \ref{rem:reg-mu2} and \ref{rem:error-mu2}, the choice $\mu=\al-1$
achieves the rate $O(h^{\al+k-1+\beta})$. In theory, the choice of $\mu(\ge \al)$ does
not affect the convergence of P1 method, and for the P2 method, the optimal convergence rate
holds only for $\mu\ge \al+1/2$. This is confirmed by Table \ref{tab:source-b-q-mu}: the
choice $\mu=\al+1/4$ fails to achieve the optimal order.

The numerical results for example (b2), i.e., $f(x)=(1-x)^{3/5}$, with $q(x)=x$, are shown in Table \ref{tab:source-right-sing}. In this
case, the weak solution singularity appears at both left and right end points. Like before we observe an
optimal convergence order $h^{\al}$ for the $P1$ FEM. Interestingly, for the $P2$ FEM, the empirical
orders are close to the theoretical ones when $\al$ is close to $1.5$, whose precise mechanism awaits
theoretical justification.

\begin{table}[hbt!]
\caption{The $L^2\II$-norm of the error for example (b1) with $q=x$, $\mu=4$, $\al=1.55, 1.75, 1.95$, $h=1/2^m$.}
\label{tab:source-b-q}
\vspace{-.3cm}
\begin{center}
     \begin{tabular}{|c|c|cccccc|c|}
     \hline
     $\al$  & $m$ &$3$ &$4$ &$5$ & $6$ & $7$ & $8$  &rate \\
     \hline
     $1.55$ & $P1$ &1.47e-2 &5.40e-3 &1.91e-3 &6.62e-4 &2.26e-4 &7.58e-5 & 1.52 (1.05) \\

     & $P2$   &2.21e-4 &3.88e-5 &6.71e-6 &1.15e-6 &1.98e-7 & 3.37e-8 & 2.54 (2.05)\\
    \hline
     $1.75$ & $P1$    &4.64e-3 &1.41e-3 &4.21e-4 &1.25e-4 &3.70e-5 & 1.08e-5 & 1.75 (1.25)\\

     & $P2$    & 3.35e-5 &5.05e-6 &7.56e-7 &1.13e-7 &1.68e-8 &2.52e-9 & 2.74 (2.25)\\
     \hline
     $1.95$ & $P1$     &1.64e-3 & 4.20e-4	&1.08e-4 &2.76e-5 &7.07e-6 &1.80e-6&  1.93 (1.45)\\

     & $P2$   &2.92e-6	&3.82e-7 &4.96e-8 &6.44e-9 &8.36e-10 &1.15e-10 &  2.95 (2.45)\\
     \hline
     \end{tabular}
\end{center}
\end{table}

\begin{table}[hbt!]
\caption{The $L^2\II$-norm of the error for example (b1) with $q=x$, $\mu=4$, $\al=1.05, 1.25, 1.45$, $h=1/2^m$.}
\label{tab:source-small_al}
\vspace{-.3cm}
\begin{center}
     \begin{tabular}{|c|c|cccccc|c|}
     \hline
     $\al$  & $m$ &$3$ &$4$ &$5$ & $6$ & $7$ & $8$  &rate \\
     \hline
     $1.05$ & $P1$ &5.13e-2 &3.12e-2 &1.73e-2 &8.97e-3 &4.41e-3 &2.06e-3 & 1.02 ($--$) \\

     & $P2$   &1.11e-2 &2.92e-3 &7.29e-4 &1.78e-4 &4.33e-5 & 1.03e-5 & 2.04 ($--$)\\
    \hline
     $1.25$ & $P1$    &2.05e-2 &1.01e-2 &4.61e-3 &2.01e-3 &8.49e-4 & 3.47e-4 & 1.24 ($--$)\\

     & $P2$    & 2.55e-3 &5.66e-4 &1.22e-4 &2.59e-5 &5.46e-6 &1.14e-6 & 2.25 ($--$)\\
     \hline
     $1.45$ & $P1$     &7.38e-3 & 2.90e-3	&1.10e-3 &4.10e-4 &1.50e-4 &5.40e-5&  1.43 ($--$)\\

     & $P2$   &5.19e-4	&9.85e-5 &1.83e-5 &3.38e-6 &6.20e-7 &1.13e-7 &  2.44 ($--$)\\
     \hline
     \end{tabular}
\end{center}
\end{table}

\begin{table}[hbt!]
\caption{The $L^2\II$-norm of the error for example (b1) with $q=x$, $\al=1.75$, $h=1/2^m$ and different $\mu$.}
\label{tab:source-b-q-mu}
\vspace{-.3cm}
\begin{center}
     \begin{tabular}{|c|c|cccccc|c|}
     \hline
     $\mu$  & $m$ &$3$ &$4$ &$5$ & $6$ & $7$ & $8$  &rate \\
     \hline
     $3$ & $P1$ &4.05e-3 &1.20e-3 &3.55e-4 &1.05e-4 &3.08e-5 &8.96e-6 & 1.75 (1.25) \\

     & $P2$   &2.21e-4 &3.88e-5 &6.71e-6 &1.15e-6 &1.98e-7 & 3.37e-8 &  2.74 (2.25)\\
    \hline
     $0.75$ & $P1$    &3.07e-3 &8.92e-4 &2.60e-4 &7.61e-5 &2.22e-5 & 6.41e-6 &  1.75 (1.25)\\

        & $P2$    & 3.35e-5 &5.05e-6 &7.56e-7 &1.13e-7 &1.68e-8 &2.52e-9 &  2.74 (2.25)\\
     \hline
     $2$ & $P1$     &3.57e-3 & 1.05e-3	&3.06e-4 &8.95e-5 &2.62e-5 &7.58e-6& 1.75 (1.25)\\

     & $P2$  &6.81e-6	&1.12e-6 &1.83e-7 &2.98e-8 &4.90e-9 &8.27e-10 &  2.60 ($--$)\\
     \hline
     \end{tabular}
\end{center}
\end{table}
\begin{table}[h!]
\caption{The $L^2\II$-norm of the error for  example (b2) with $q=x$, $\mu=3$, $\al=1.55, 1.75, 1.95$, $h=1/2^m$.}
\label{tab:source-right-sing}
\vspace{-.3cm}
\begin{center}
     \begin{tabular}{|c|c|cccccc|c|}
     \hline
     $\al$  & $m$ &$3$ &$4$ &$5$ & $6$ & $7$ & $8$  &rate \\
     \hline
     $1.55$ & $P1$ &5.15e-3 &1.72e-3 &5.74e-4 &1.91e-4 &6.38e-5 &2.12e-5 & 1.59 ($1.05$) \\

     & $P2$   &3.91e-5 &1.03e-5 &2.62e-6 &6.42e-7 &1.54e-7 & 3.59e-8 & 2.04 ($2.05$)\\
    \hline
     $1.75$ & $P1$    &1.98e-3 &5.54e-4 &1.55e-4 &4.39e-5 &1.24e-5 & 3.56e-6 & 1.82 ($1.25$)\\

     & $P2$    & 2.02e-5 &3.64e-6 &6.74e-7 &1.28e-7 &2.46e-8 &4.76e-9 & 2.38 ($2.25$)\\
     \hline
     $1.95$ & $P1$     &1.02e-3 & 2.59e-4	&6.52e-5 &1.65e-5 &4.15e-6 &1.04e-6 &  1.99 ($1.45$)\\

     & $P2$   &9.38e-6 & 1.27e-6	&1.73e-7 &2.34e-8 &3.18e-9 &4.33e-10 &  2.88 ($2.45$)\\
     \hline
     \end{tabular}
\end{center}
\end{table}

\subsubsection{Numerical results for example (c)}
Since the source term $f(x)=\chi_{[0,1/2]}$ is in $H^{\beta}\II$, $\beta \in (2-\alpha,1/2)$, by Theorem
\ref{thm:reg-new}, $w$ belongs to $H^{2+\beta}\II$. Hence by repeating the argument
for Theorem \ref{thm:error-u}, the P1 FEM achieves a convergence rate of $O(h^{\al-1+\beta})$, while that
 for the P2 FEM is $O(h^{\al -1/2+\beta})$, $\beta\in (2-\alpha,1/2)$. In Table \ref{tab:source-c-q},
we show the results when the discontinuous point is supported at a grid point. The P1 FEM converges at a
rate $O(h^{\al})$, which is one half order higher than the theoretical one. However, the P2 FEM exhibits
superconvergence, which is attributed to the fact that the solution is piecewise smooth
and $\| (w-w_h)' \|_{L^2}$ is second order convergent. In Table \ref{tab:source-c2-q}, we
show the error when the discontinuous point is not supported at a grid point. Then the empirical rate
for P2 FEM is $O(h^{\al+1/4})$, i.e., one quarter order higher than the theoretical ones.

\begin{table}[hbt!]
\caption{The $L^2\II$-norm of the error for example (c) with $q=x$, $\mu=4$,
$\al=1.55, 1.75, 1.95$, $h=1/2^m$.}
\label{tab:source-c-q}
\vspace{-.3cm}
\begin{center}
     \begin{tabular}{|c|c|cccccc|c|}
     \hline
     $\al$  & $m$ &$3$ &$4$ &$5$ & $6$ & $7$ & $8$  &rate \\
     \hline
     $1.55$ & $P1$ &4.40e-3 &1.54e-3 &5.33e-4 &1.83e-4 &6.22e-5 &2.09e-5 &  1.54 (1.05) \\

     & $P2$   &7.36e-5 &1.28e-5 &2.22e-6 &3.80e-7 &6.05e-8 & 1.11e-8 &  2.54 (2.05)\\
    \hline
     $1.75$ & $P1$    &1.84e-3 &5.18e-4 &1.46e-4 &4.17e-5 &1.20e-5 & 3.43e-6 &   1.81 (1.25)\\

     & $P2$    & 1.20e-5 &1.80e-6 &2.68e-7 &4.00e-8 &5.96e-9 &8.94e-10 &  2.74 (2.25)\\
     \hline
     $1.95$ & $P1$     &1.08e-3 & 2.72e-4	&6.87e-5 &1.73e-5 &4.36e-6 &1.09e-6&  1.99 (1.45)\\

     & $P2$   &1.14e-6	&1.49e-7 &1.94e-8 &2.51e-9 &3.26e-10 &4.51e-11 &  2.92 (2.45)\\
     \hline
     \end{tabular}
\end{center}
\end{table}

\begin{table}[hbt!]
\caption{The $L^2\II$-norm of the error for example (c) with $q=x$, $\mu=4$,
$\al=1.55, 1.75, 1.95$, $h=1/(2^m+1)$.}
\label{tab:source-c2-q}
\vspace{-.3cm}
\begin{center}
     \begin{tabular}{|c|c|cccccc|c|}
     \hline
     $\al$  & $m$ &$3$ &$4$ &$5$ & $6$ & $7$ & $8$  &rate \\
     \hline
     $1.55$ & $P1$ &1.43e-2 &5.65e-3 &2.08e-4 &7.37e-4 &2.55e-4 &8.60e-5 &  1.54 (1.05) \\

     & $P2$   &1.56e-4 &4.77e-5 &1.42e-5 &4.15e-6 &1.21e-6 & 3.49e-7 &  1.83 (1.55)\\
    \hline
     $1.75$ & $P1$    &4.47e-3 &1.48e-3 &4.65e-4 &1.42e-4 &4.23e-5 & 1.24e-5 &  1.76 (1.25)\\

     & $P2$    & 6.41e-5 &1.81e-5 &4.83e-6 &1.24e-6 &3.17e-7 &8.00e-8 &  2.00 (1.75)\\
     \hline
     $1.95$ & $P1$   &1.72e-3 &4.98e-4 &1.36e-4 &3.59e-5 &9.32e-6 &2.38e-6 &  1.96 (1.45)\\
            & $P2$   &2.98e-5 &7.34e-6 &1.71e-6 &3.84e-7 &8.51e-8 &1.97e-8 &  2.20 (1.95)\\
     \hline
     \end{tabular}
\end{center}
\end{table}

\subsection{Fractional Sturm-Liouville problem}
Now we illustrate the FSLP \eqref{eqn:eig} with the following potentials:
\begin{itemize}
  \item[(a)] a zero potential $q_1=0$;
  \item[(b)] a non-zero potential $q_2=x$.
\end{itemize}

Like before, we use a uniform mesh with a mesh size $h=1/(2^m\times10)$. We measure the
accuracy of an approximate eigenvalue $\la_h$ by the absolute error $|\la-\la_h|$ and the
approximate eigenfunction $u_h$ by the $L^2\II$-error $\|u-u_h\|_{L^2\II}$. It is well
known that problem \eqref{eqn:eig} with $q(x)=0$ has a countable number of eigenvalues $\la$
that are zeros of the Mittag-Leffler functions $E_{\al,\al}(-\lambda)$ \cite{Dzrbasjan:1970}
and the corresponding eigenfunction is given by $u(x)=x^{\al-1}E_{\al,\al}(-\la x^\al)$.
However, accurately computing zeros of the Mittag-Leffler function remains a challenging task and
it does not cover the interesting case of a general potential $q$. Thus we compute
eigenvalues $\la$ and eigenfunctions $u$ on a very refined mesh with $h=
1/6000$ by P2 FEM. The resulting discrete eigenvalue problems are solved by
built-in \texttt{MATLAB} function \texttt{eigs}.

The numerical results for the two potentials are presented in Tables \ref{tab:SL-q1-1.75}-\ref{tab:SL-q1-1.75-fun}
and \ref{tab:SL-q2-1.75}-\ref{tab:SL-q2-1.75-fun}, respectively, for $\alpha=1.75$. Although not presented,
we note that a similar convergence behavior is observed for other fractional orders. Since both $q_1$ and $q_2$ belong to
$H^1\II$, by Theorem \ref{thm:eig-approx}, the theoretical rate is $O(h^{\al+k-1/2})$, $k=0,1$, for
the approximate eigenvalues and eigenfunctions. The errors are identical for both potentials,
i.e., the potential term influences the errors very little. For $\alpha=1.75$, the first eight eigenvalues
are all real. Surprisingly, the approximation exhibits a second-order convergence for P1 method, and
the mechanism of superconvergence is to be analyzed.
Further, P2 approximation converges almost at rate of $O(h^{\al+1})$.
However, the eigenfunction approximation converges steadily at a standard rate $O(h^{\al+k})$.


\begin{table}[hbt!]
\centering
\caption{The absolute errors of the first eight eigenvalues, which are all real, for $\alpha=1.75$, $q_1$, $\mu=3$,
with mesh size $h=1/(10\times2^m)$.}\label{tab:SL-q1-1.75}
\vspace{-.2cm}
\begin{tabular}{cccccccccc}
\hline
  &$e\backslash m$ & 3 & 4 & 5 & 6 & 7 & 8 & rate\\
\hline
  &$\lambda_1$  & 1.73e-3 & 4.77e-4 & 1.33e-4 & 3.73e-5 & 1.05e-5 & 3.01e-6 &1.83\\
  &$\lambda_2$  & 1.15e-2 & 2.89e-3 & 7.30e-4 & 1.84e-4 & 4.68e-5 & 1.20e-5 &1.98\\
  &$\lambda_3$  & 5.34e-2 & 1.34e-2 & 3.39e-3 & 8.58e-4 & 2.18e-4 & 5.56e-5 &1.98\\
 P1 &$\lambda_4$  & 1.51e-1 & 3.76e-2 & 9.38e-3 & 2.34e-4 & 5.87e-4 & 1.47e-4 &2.00\\
  &$\lambda_5$  & 3.57e-1 & 8.92e-2 & 2.24e-2 & 5.61e-3 & 1.41e-3 & 3.56e-4 & 2.00\\
  &$\lambda_6$  & 6.89e-1 & 1.72e-1 & 4.28e-2 & 1.07e-2 & 2.66e-3 & 6.65e-4 & 2.01\\
  &$\lambda_7$  & 1.26e0  & 3.16e-1 & 7.91e-2 & 1.99e-2 & 4.99e-3 & 1.25e-3 & 2.00\\
  &$\lambda_8$  & 2.02e0  & 5.01e-1 & 1.25e-1 & 3.11e-2 & 7.75e-3 & 1.93e-3 & 2.01\\
\hline
  &$e\backslash m$ & 1 & 2 & 3 & 4 & 5 & 6 & rate\\
\hline
  &$\lambda_1$  & 1.00e-4 & 1.50e-5 & 2.22e-6 & 3.17e-7 & 3.36e-8 & 8.37e-9 &2.71\\
  &$\lambda_2$  & 1.57e-3 & 2.46e-4 & 3.72e-5 & 5.54e-6 & 8.07e-7 & 1.02e-7 &2.78\\
  &$\lambda_3$  & 5.69e-3 & 9.93e-4 & 1.57e-4 & 2.36e-5 & 3.49e-6 & 4.86e-7 &2.70\\
 P2 &$\lambda_4$  & 1.19e-2 & 2.55e-3 & 4.26e-4 & 6.60e-5 & 9.96e-6 & 1.49e-6 &2.59\\
  &$\lambda_5$  & 1.25e-2 & 4.77e-3 & 8.85e-4 & 1.41e-4 & 2.18e-5 & 3.39e-6 & 2.37\\
  &$\lambda_6$  & 5.52e-3 & 7.34e-3 & 1.59e-3 & 2.67e-4 & 4.12e-5 & 6.17e-6 & 2.61\\
  &$\lambda_7$  & 8.21e-2  & 7.92e-3 & 2.43e-3 & 4.37e-4 & 6.93e-5 & 1.03e-5 & 2.65\\
  &$\lambda_8$  & 2.39e-1  & 6.07e-3 & 3.52e-3 & 6.83e-4 & 1.11e-4 & 1.75e-5 & 2.64\\
\hline
\end{tabular}
\end{table}


\begin{table}[hbt!]
\centering
\caption{The $L^2\II$ errors of the first five eigenfunctions $u_i$, for $\alpha=1.75$, $q_1$, $\mu=3$,
with mesh size $h=1/(10\times2^m)$.}\label{tab:SL-q1-1.75-fun}
\vspace{-.2cm}
\begin{tabular}{cccccccccc}
\hline
  &$e\backslash m$ & 3 & 4 & 5 & 6 & 7 & 8 & rate\\
\hline
 & $u_1$ & 2.51e-4 & 7.48e-5 & 2.23e-5 & 6.66e-6 & 1.98e-6 & 5.91e-7 & 1.75\\
 & $u_2$ & 7.19e-4 & 2.11e-4 & 6.23e-5 & 1.84e-5 & 5.45e-6 & 1.62e-7 & 1.76\\
P1 & $u_3$ & 1.54e-3 & 4.49e-4 & 1.31e-4 & 3.86e-5 & 1.14e-5 & 3.36e-6 & 1.77\\
 & $u_4$ & 2.68e-3 & 7.73e-4 & 2.25e-4 & 6.57e-5 & 1.93e-5 & 5.68e-6 & 1.78\\
 & $u_5$ & 4.05e-3 & 1.16e-3 & 3.37e-4 & 9.81e-5 & 2.88e-5 & 8.46e-6 & 1.78\\
\hline
 & $e\backslash m$ & 1 & 2 & 3 & 4 & 5 & 6 & rate\\
\hline
 & $u_1$ & 5.39e-5 & 8.12e-6 & 1.22e-6 & 1.83e-7 & 2.72e-8 & 4.05e-9 & 2.74\\
 & $u_2$ & 4.01e-4 & 6.06e-5 & 9.11e-6 & 1.37e-6 & 2.04e-7 & 3.04e-8 & 2.74\\
P2 & $u_3$ & 1.22e-3 & 1.86e-4 & 2.80e-5 & 4.21e-6 & 6.30e-7 & 9.40e-8 & 2.73\\
 & $u_4$ & 2.68e-3 & 4.10e-4 & 6.21e-5 & 9.35e-6 & 1.40e-6 & 2.10e-7 & 2.73\\
 & $u_5$ & 4.87e-3 & 7.52e-4 & 1.14e-4 & 1.73e-5 & 2.59e-6 & 3.89e-7 & 2.73\\
\hline
\end{tabular}
\end{table}


\begin{table}[hbt!]
\centering
\caption{The absolute errors of the first eight eigenvalues, which are all real, for $\alpha=1.75$, $q_2$, $\mu=3$,
with mesh size $h=1/(10\times2^m)$.}\label{tab:SL-q2-1.75}
\vspace{-.2cm}
\begin{tabular}{cccccccccc}
\hline
  &$e\backslash m$ & 3 & 4 & 5 & 6 & 7 & 8 & rate\\
\hline
 & $\lambda_1$  & 1.69e-3 & 4.67e-4 & 1.30e-4 & 3.64e-5 & 1.02e-5 & 2.93e-6 &1.83\\
 & $\lambda_2$  & 1.11e-2 & 2.89e-3 & 7.29e-4 & 1.84e-4 & 4.68e-5 & 1.20e-5 &1.99\\
  &$\lambda_3$  & 5.34e-2 & 1.34e-2 & 3.39e-3 & 8.57e-4 & 2.17e-4 & 5.56e-5 &1.99\\
P1  &$\lambda_4$  & 1.51e-1 & 3.76e-2 & 9.38e-3 & 2.34e-4 & 5.87e-4 & 1.47e-4 &2.00\\
 & $\lambda_5$  & 3.56e-1 & 8.92e-2 & 2.24e-2 & 5.61e-3 & 1.41e-3 & 3.56e-4 & 2.00\\
  &$\lambda_6$  & 6.89e-1 & 1.72e-1 & 4.28e-2 & 1.07e-2 & 2.66e-3 & 6.65e-4 & 2.01\\
 & $\lambda_7$  & 1.26e0  & 3.16e-1 & 7.91e-2 & 2.00e-2 & 4.99e-3 & 1.25e-3 & 2.00\\
 & $\lambda_8$  & 2.02e0  & 5.01e-1 & 1.25e-1 & 3.11e-2 & 7.75e-3 & 1.93e-3 & 2.01\\
\hline
  &$e\backslash m$ & 1 & 2 & 3 & 4 & 5 & 6 & rate\\
\hline
  &$\lambda_1$  & 8.69e-4 & 1.30e-5 & 1.91e-6 & 2.64e-7 & 1.98e-8 & 1.65e-8 &2.71\\
 & $\lambda_2$  & 1.52e-3 & 2.38e-4 & 3.60e-5 & 5.36e-6 & 7.80e-7 & 9.80e-8 &2.78\\
 & $\lambda_3$  & 5.58e-3 & 9.76e-4 & 1.53e-4 & 2.32e-5 & 3.44e-6 & 4.74e-7 &2.77\\
 P2 & $\lambda_4$  & 1.17e-2 & 2.53e-3 & 4.22e-4 & 6.53e-5 & 9.86e-6 & 1.47e-6 &2.72\\
 & $\lambda_5$  & 1.22e-2 & 4.73e-3 & 8.78e-4 & 1.41e-4 & 2.17e-5 & 3.37e-6 & 2.68\\
 & $\lambda_6$  & 5.91e-2 & 7.28e-3 & 1.58e-3 & 2.64e-4 & 4.10e-5 & 6.14e-6 & 2.71\\
  &$\lambda_7$  & 8.26e-2  & 7.84e-3 & 2.41e-3 & 4.35e-4 & 6.90e-5 & 1.02e-5 & 2.66\\
 & $\lambda_8$  & 2.40e-1  & 5.97e-3 & 3.50e-3 & 6.80e-4 & 1.11e-4 & 1.75e-5 & 2.62\\
\hline
\end{tabular}
\end{table}
	 	

\begin{table}[hbt!]
\centering
\caption{The $L^2\II$ errors of the first five eigenfunctions $u_i$, for $\alpha=1.75$, $q_2$, $\mu=3$,
with mesh size $h=1/(10\times2^m)$.}\label{tab:SL-q2-1.75-fun}
\vspace{-.2cm}
\begin{tabular}{cccccccccc}
\hline
  &$e\backslash m$ & 3 & 4 & 5 & 6 & 7 & 8 & rate\\
\hline
 & $u_1$ & 2.49e-4 & 7.44e-5 & 2.22e-5 & 6.63e-6 & 1.98e-6 & 5.90e-7 & 1.75\\
 & $u_2$ & 7.27e-4 & 2.13e-4 & 6.29e-5 & 1.86e-5 & 5.50e-6 & 1.63e-7 & 1.76\\
P1 & $u_3$ & 1.55e-3 & 4.52e-4 & 1.32e-4 & 3.88e-5 & 1.14e-5 & 3.38e-6 & 1.77\\
 & $u_4$ & 2.70e-3 & 7.77e-4 & 2.26e-4 & 6.60e-5 & 1.94e-5 & 5.71e-6 & 1.77\\
 & $u_5$ & 4.07e-3 & 1.17e-3 & 3.38e-4 & 9.84e-5 & 2.88e-5 & 8.49e-6 & 1.78\\
\hline
 & $e\backslash m$ & 1 & 2 & 3 & 4 & 5 & 6 & rate\\
\hline
 & $u_1$ & 5.52e-5 & 8.34e-6 & 1.25e-6 & 1.88e-7 & 2.81e-8 & 4.21e-9 & 2.74\\
 & $u_2$ & 4.06e-4 & 6.13e-5 & 9.22e-6 & 1.38e-6 & 2.07e-7 & 3.08e-8 & 2.74\\
P2 & $u_3$ & 1.23e-3 & 1.87e-4 & 2.82e-5 & 4.24e-6 & 6.35e-7 & 9.48e-8 & 2.74\\
 & $u_4$ & 2.69e-3 & 4.12e-4 & 6.25e-5 & 9.41e-6 & 1.41e-6 & 2.11e-7 & 2.73\\
 & $u_5$ & 4.89e-3 & 7.56e-4 & 1.15e-4 & 1.73e-5 & 2.61e-6 & 3.90e-7 & 2.73\\
\hline
\end{tabular}
\end{table}

\subsection{Preconditioned algorithms}
One advantage of the new approach is that the leading term can naturally act as a preconditioner,
because it is dominant and has simple structure. We present the condition number of the systems
in Table \ref{tab:precondition}, in which P and W denotes with preconditioner and without
preconditioner, respectively. The system is more stable when $\al$ close to 2.
Interestingly, the preconditioned system is very stable for the choice $\mu=\al-1$,
which awaits theoretical justifications.

\begin{table}[hbt!]
\centering
\caption{condition number for P1, $q=x$, $\al=1.55, 1.75, 1.95$, $h=1/2^m$. (P - preconditioned, W - without preconditioner)
}\label{tab:precondition}
\vspace{-.2cm}
\begin{tabular}{c|ccccccc}
\hline
 $\al$ & $\mu$ & $m$& 3  & 5  & 7 & 9& 11\\
\hline
         &$\al-1$ & P& 1.21e0  & 1.74e0 & 5.89e0 & 5.81e1& 7.82e2\\
         &        & W& 2.32e1  & 3.80e2 & 6.08e3 & 9.73e4& 1.56e6\\
         \cline{2-8}
1.55        & $3$ & P& 2.10e0  & 1.12e1  & 1.34e2 & 1.85e3& 2.58e4\\
        &         & W& 3.30e2  & 5.47e2  & 8.78e3 & 1.41e5& 2.25e6\\
         \cline{2-8}
          &$4$    & P& 2.26e0  & 1.35e1  & 1.67e2 & 2.32e3 & 3.23e4\\
           &      & W& 3.41e1  & 5.71e2  & 9.18e3 & 1.47e5&  2.35e6\\
\hline
         &$\al-1$ & P& 1.10e0  & 1.19e0  & 1.53e0 & 3.08e0& 1.31e1\\
         &        & W& 2.36e1  & 3.87e2  & 6.20e3 & 9.91e4& 1.59e6\\
          \cline{2-8}
1.75        & $3$ & P& 1.39e0  & 2.72e0  & 1.09e1 & 7.44e1& 5.81e2\\
        &         & W& 2.85e1  & 4.69e2  & 7.51e3 & 1.20e5& 1.92e6\\
         \cline{2-8}
          & $4$   & P& 1.48e0  & 3.16e0  & 1.41e1 & 9.99e1& 7.86e2\\
        &         & W& 2.93e1  & 4.82e2  & 7.73e3 & 1.24e5& 1.98e6\\
\hline
         &$\al-1$ & P& 1.06e0  & 1.06e0  & 1.07e0 & 1.09e0& 1.17e0\\
         &        & W& 2.40e1  & 3.93e2  & 6.30e3 & 1.01e5& 1.61e6\\
          \cline{2-8}
1.95        & $3$ & P& 1.05e0  & 1.13e0  & 1.32e0 & 1.81e0& 3.38e0\\
        &         & W& 2.49e1  & 4.08e2  & 6.53e3 & 1.04e5& 1.67e6\\
         \cline{2-8}
         & $4$    & P& 1.06e0  & 1.16e0  & 1.40e0 & 2.05e0& 4.02e0\\
        &         & W& 2.50e1  & 4.10e2  & 6.57e3 & 1.05e5& 1.68e6\\
\hline
\end{tabular}
\end{table}

\section{Concluding remarks}

In this work, we have developed a new approach to the boundary value problem with a Riemann-Liouville
fractional derivative of order $\alpha\in(3/2,2)$ in the leading term. It is based on transforming the
problem into a second-order boundary value problem (possibly with nonlocal lower-order terms), and
eliminates several challenges with the classical formulation. The well-posedness of the formulation
and the regularity pickup were analyzed, and a novel Galerkin finite element method with P1 and P2
finite elements have been provided. The $L^2\II$ error estimate of the approximation has been established.
Further the approach was extended to the Sturm-Liouville problem, and convergence rates of the
eigenvalue and eigenfunction approximations were provided. Extensive numerical experiments were
provided to verify the convergence theory.

In our theoretical developments, the analysis is only for the case $\alpha>3/2$. The interesting
case $\alpha\in(1,3/2]$ was not covered by the theory. However, our numerical experiments indicate that
the approach converges equally well in this case. Further, the theoretical convergence rate is
one half order lower than the empirical one, for both source problem and Sturm-Liouville problem.
These gaps are still to be closed. Last, it is of much interest to extend the approach to the time
dependent case \cite{JinLazarovPasciakZhou:2014siam,JinLazarovPasciak:2015ima} as well as the multi-dimensional analogue,
for which a complete solution theory seems missing.

\section*{Acknowledgment}
The authors are grateful to the anonymous referees for their insightful comments, which have led
to improved presentation of the paper.
The research of R. Lazarov was supported in parts
by NSF Grant DMS-1016525 and also by Award No. KUS-C1-016-04, made by King Abdullah University of Science and Technology (KAUST).
X. Lu is supported by Natural Science Foundation of China No. 91230108 and No. 11471253. Z. Zhou was partially supported
by NSF Grant DMS-1016525.

\appendix
\section{Computation of the stiffness matrix}\label{app:stiff}

In this appendix we discuss the implementation of the new approach, especially the computation of
the stiffness matrix $A=[a_{ji}]$, with
\begin{equation*}
 a_{ji} = (\phi_i',\phi_j') + ({\DDR 0 {2-\alpha}} \phi_i,q\phi_j)+({\DDR 0 {2-\alpha}}\phi_i)(1) ( p,\phi_j),
\end{equation*}
with $\{\phi_i\}$ being the finite element basis functions. The computation of the leading
term $(\phi_i',\phi_j')$ is straightforward, and thus we focus on the last two terms.
Below we shall discuss the cases of piecewise linear and piecewise quadratic finite elements
separately.

\subsection{Piecewise linear finite elements}
To simplify the notation, we denote $\gamma=\alpha-1$. We first note the identity (with $h_i=x_i-x_{i-1}$)
\begin{equation*}
  \begin{aligned}
    {\DDR 0 {2-\alpha}}\phi_i(x) & = \frac{1}{\Gamma(\gamma)}\int_0^x (x-t)^{\gamma-1}\phi_i'(t)dt\\
      & = \frac{1}{\Gamma(\gamma)} \int_0^x (x-t)^{\gamma-1}(\frac{\chi_{[x_{i-1},x_i]}}{h_i}-\frac{\chi_{[x_{i},x_{i+1}]}}{h_{i+1}})dt\\
      & = \frac{1}{\Gamma(\gamma+1)}\left[h_i^{-1}((x-x_{i-1})_+^{\gamma} - (x-x_i)_+^{\gamma}) - h_{i+1}^{-1}((x-x_i)_+^{\gamma}-(x-x_{i+1})_+^{\gamma})\right],
  \end{aligned}
\end{equation*}
where $(c)_+$ denotes the positive part, i.e., $(c)_+=\max(c,0)$.
In the case of a uniform mesh, it simplifies to
\begin{equation*}
    {\DDR 0 {2-\alpha}}\phi_i(x) = \frac{1}{\Gamma(\gamma+1)h}
       \left((x-x_{i-1})_+^{\gamma} + (x-x_{i+1})_+^{\gamma} - 2(x-x_i)_+^{\gamma}\right).
\end{equation*}
Hence, the term $b_{ji}=\int_0^1 {\DDR 0 {2-\alpha}}\phi_i(x)q(x) \phi_j(x)dx$ in
the middle is of the form
\begin{equation*}
  \begin{aligned}
    b_{ji} & = \int_{x_{j-1}}^{x_j} q(x)\phi_j(x){\DDR 0 {2-\alpha}}\phi_i(x) dx
      + \int_{x_j}^{x_{j+1}} q(x)\phi_j(x){\DDR 0 {2-\alpha}}\phi_i(x)dx.
  \end{aligned}
\end{equation*}
The integrals on the right hand side can be evaluated accurately using an appropriate Gauss-Jacobi
quadrature rule. The last term is a rank-one matrix, and it requires only computing two
vectors. The quantity $ ({\DDR 0 {2-\alpha}}\phi_i)(1)$ can be computed in closed form
\begin{equation*}
  \begin{aligned}
  ({\DDR 0 {2-\alpha}}\phi_i)(1) & = \frac{1}{\Gamma(\gamma)}\int_0^1 (1-t)^{\gamma-1}\phi_i'(t)dt\\
  & = \frac{1}{\Gamma(\gamma)}\left[h_i^{-1}\int_{x_{i-1}}^{x_i}(1-t)^{\gamma-1}dt
   - h_{i+1}^{-1}\int_{x_i}^{x_{i+1}}(1-t)^{\gamma-1}dt\right]\\
   & = \frac{1}{\Gamma(\gamma+1)}\left[h_i^{-1}((1-x_{i-1})^{\gamma}-(1-x_{i})^\gamma)-h_{i+1}^{-1}((1-x_i)^\gamma-(1-x_{i+1})^{\gamma})\right].
  \end{aligned}
\end{equation*}
In case of a uniform mesh, it simplifies to
\begin{equation*}
   ({\DDR 0 {2-\alpha}}\phi_i)(1) = \frac{1}{\Gamma(\gamma+1)h}\left((1-x_{i-1})^{\gamma}+(1-x_{i+1})^{\gamma}-2(1-x_i)^{\gamma}\right).
\end{equation*}
For $h\ll x-x_i$, $(x-x_{i-1})_+^\gamma + (x-x_{i+1})_+^\gamma\approx 2(x-x_i)_+^\gamma$. Then
the expression for $\DDR0{2-\al} \phi_i(x)$ may suffer precision loss due to roundoff errors.
We may improve the accuracy by writing
$$  (x-x_{i-1})_+^\gamma - (x-x_{i})_+^\gamma =: A^\gamma-B^\gamma=B^\gamma\left[(A/B)^\ga-1\right]=B^\ga {\text{\bf expm1}}(\gamma\log(A/B)), $$
which allows stable computation in e.g., \texttt{MATLAB}.
Last, given $w_h$, one needs to recover $u_h$, which involves only fractional-order
differentiation of the basis $\{\phi_i\}$
\begin{equation*}
  u_h(x_j) = {\DDR 0 {2-\alpha}}w_h(x_j) - ({\DDR 0 {2-\alpha}}w_h)(1)x_j^\mu,
\end{equation*}
where the first term can be computed efficiently by (with $w_i=w_h(x_i)$)
\begin{equation*}
  \begin{aligned}
    u_h(x_j) &= \frac{1}{\Gamma(\gamma+1)}\sum_{i=1}^{j-1} w_i\left[h_i^{-1}((x_j-x_{i-1})^\gamma-(x_j-x_i)^\gamma)\right.\\
    &\qquad \left.+h_{i+1}^{-1}((x_j-x_i)^\gamma-(x_j-x_{i+1})^\gamma)\right]
    + \frac{1}{\Gamma(\gamma+1)}w_jh_j^{-1}(x_j-x_{j-1})^{\gamma}.
  \end{aligned}
\end{equation*}

\subsection{Piecewise quadratic finite elements}
Next we describe the case of piecewise quadratic finite elements, i.e.,
\begin{equation*}
  u = \sum_{i=1}^Nu_i\phi_i(x) + \sum_{i=0}^{N-1}u_{i'}\phi_{i'}(x),
\end{equation*}
where for simplicity, we denote by $x_{i'}=(x_i+x_{i+1})/2$, the middle point of the interval $[x_j,
x_{j+1}]$, and $\phi_{i'}$ denotes the basis function corresponding to the node $x_{i'}$. Then
like before, we find
\begin{equation*}
  \begin{aligned}
    {\DDR 0 {2-\alpha}}\phi_i(x) & = \frac{1}{\Gamma(\gamma)}\int_0^x (x-t)^{\gamma-1}\phi_i'(t)dt\\
      & = \frac{1}{\Gamma(\gamma)} \int_0^x (x-t)^{\gamma-1}(\frac{\chi_{[x_{i-1},x_i]}}{h_i}(3+4\frac{t-x_i}{h_i})+\frac{\chi_{[x_{i},x_{i+1}]}}{h_{i+1}}(-3+4\frac{t-x_i}{h_{i+1}}))dt\\
      & = \frac{1}{\Gamma(\gamma+1)} \left[3h_i^{-1}((x-x_{i-1})^\gamma_+-(x-x_i)^\gamma_+) -3h_{i+1}^{-1}((x-x_i)^\gamma_+-(x-x_{i+1})^\gamma_+)\right]\\
      & \qquad + \frac{1}{\Gamma(\gamma)}\left[ 4h_i^{-2}(\gamma^{-1}(x-x_i)((x-x_{i-1})^\gamma-(x-x_i)^\gamma)\right.\\
      & \qquad\qquad\left.-(\gamma+1)^{-1}((x-x_{i-1})^{\gamma+1}-(x-x_i)^{\gamma+1})\right]\\
      & \qquad + \frac{1}{\Gamma(\gamma)}\left[4h_{i+1}^{-2}(\gamma^{-1}(x-x_i)((x-x_i)^\gamma-(x-x_{i+1})^\gamma)\right.\\
      & \qquad\qquad\left.-(\gamma+1)^{-1}((x-x_i)^{\gamma+1}-(x-x_{i+1})^{\gamma+1}))\right].
  \end{aligned}
\end{equation*}
For a uniform mesh, the expression simplifies to
\begin{equation*}
  \begin{aligned}
    {\DDR 0 {2-\alpha}}\phi_i(x)
      & = \frac{3}{\Gamma(\gamma+1)}\left(3h^{-1}+4h^{-2}(x-x_i)\right)\left((x-x_{i-1})^\gamma_++(x-x_{i+1})_+^\gamma-2(x-x_i)^\gamma_+\right)\\
      &\qquad-\frac{4}{\Gamma(\gamma)(\gamma+1)h^2}((x-x_{i-1})^{\gamma+1}+(x-x_{i+1})^{\gamma+1}-2(x-x_i)^{\gamma+1}).
  \end{aligned}
\end{equation*}
Likewise, with $\phi_{i'}= 1 - 4\frac{(x-x_{i'})^2}{h_{i+1}^2}$, we have
\begin{equation*}
  \begin{aligned}
    {\DDR 0 {2-\alpha}}\phi_{i'}(x) & = \frac{1}{\Gamma(\gamma)}\int_0^x (x-t)^{\gamma-1}\phi_{i'}'(t)dt\\
      & = \frac{1}{\Gamma(\gamma)} \int_0^x -8h_{i+1}^{-2}(t-x_{i'})\chi_{[x_{i},x_{i+1}]}(x-t)^{\gamma-1}dt\\
      &= \frac{-8}{\Gamma(\gamma)h_{i+1}^2}\left[(\gamma+1)^{-1}((x-x_{i+1})_+^{\gamma+1}-(x-x_i)_+^{\gamma+1})\right.\\
      &\qquad \qquad \left.-\gamma^{-1}(x-x_{i'})((x-x_{i+1})_+^\gamma-(x-x_i)_+^\gamma)\right].
  \end{aligned}
\end{equation*}
The computation of the remaining terms is similar to the case of piecewise linear finite elements, and thus omitted.

\bibliographystyle{abbrv}
\bibliography{frac}
\end{document}